	\definecolor{darkred}{rgb}{0.5,0,0}
	\definecolor{darkgreen}{rgb}{0,0.5,0}
	\definecolor{darkblue}{rgb}{0,0,0.5}
\DeclareSymbolFont{cyrillic}{T2A}{cmr}{m}{n}
\DeclareMathSymbol{\Sha}{\mathalpha}{cyrillic}{216}
\theoremstyle{plain}
\newtheorem{theorem}{Theorem}[section]
\newtheorem*{theorem*}{Theorem}
\newtheorem{proposition}[theorem]{Proposition}
\newtheorem{lemma}[theorem]{Lemma}
\newtheorem{question}[theorem]{Question}
\theoremstyle{remark}
\newtheorem{remark}[theorem]{Remark}
\newtheorem*{acknowledgements}{Acknowledgements}
\theoremstyle{definition}
\newtheorem{definition}[theorem]{Definition}
\newtheorem{condition}[theorem]{Condition (D)}
\newtheorem{conjecture}[theorem]{Conjecture}
\newtheorem{notation}[theorem]{Notation}
\newtheorem{assumption}[theorem]{Assumption}
\numberwithin{equation}{section}
\DeclareMathOperator{\HH}{H}
\DeclareMathOperator{\Pic}{Pic}
\DeclareMathOperator{\Br}{Br}
\DeclareMathOperator{\inv}{inv}
\DeclareMathOperator{\ev}{ev}
\DeclareMathOperator{\Spec}{Spec}
\DeclareMathOperator{\im}{Im}
\DeclareMathOperator{\Sel}{Sel}
\DeclareMathOperator{\spann}{Span}
\DeclareMathOperator{\valtwo}{val_{\it 2}}
\DeclareMathOperator{\valv}{val_{\it v}}
\DeclareMathOperator{\valx}{val_{\it x}}
\DeclareMathOperator{\valuiz}{val_{\it u_{i}^{0}}}
\DeclareMathOperator{\valu}{val_{\it u_{i}}}
\DeclareMathOperator{\Hom}{Hom}
\DeclareMathOperator{\et}{\acute{e}t}
\renewcommand{\epsilon}{\varepsilon}
\newenvironment{poliabstract}[1]
  {\begin{abstract}}
  {\end{abstract}}
\begin{document}
\onehalfspacing
\title[Integral points on affine surfaces fibered over $\mathbb{A}^{1}$]{Integral points on affine surfaces fibered over $\mathbb{A}^{1}$}

\author{H.Uppal}
	\address{}
	\email{}

\date{\today}
\thanks{2020 {\em Mathematics Subject Classification} 
	 14G12 (primary), 11D25, 14G05, 14F22 (secondary).
}
\maketitle
\selectlanguage{french}
\begin{poliabstract}{Résumé}
   Profitant d'un travail précédent d'Harpaz nous utilisons la méthode de descente-fibrations de Swinnerton-Dyer pour étudier les points entiers sur des surfaces affines qui sont des fibrations de tores de norme 1 sur $\mathbb{A}^{1}$.
   \end{poliabstract}
\selectlanguage{english}
\begin{poliabstract}{Abstract}
Taking advantage of previous work of Harpaz we use Swinnerton-Dyer's descent-fibration method to study integral points on affine surfaces which are fibrations of norm 1 tori over $\mathbb{A}^{1}$.
\end{poliabstract}

\setcounter{tocdepth}{1}
\tableofcontents
\section{Introduction}
In \cite{SD95} Swinnerton-Dyer pioneered the descent-fibration method which consists of combining the fibration method with descent on genus 1 curves. Later Colliot-Thélène, Skorobogatov and Swinnerton-Dyer \cite{CTSSD98}, Skorobogtov and Swinnerton-Dyer \cite{SSD05}, Wittenberg \cite{O07}, Harpaz and Skorobogtov \cite{HS16} adapted the method further to a wider range of settings. The idea behind the method is: given a genus 1 fibration $f:X\rightarrow Y$, find a rational point on the base where the fibre above this point is everywhere locally soluble and such that a suitable Selmer group associated to this fibre is small enough forcing this fibre to have a rational point. The adaption we are interested in for this article is one of Harpaz \cite{H19}, where he uses Swinneton-Dyer's method on a fibration of norm 1 tori over the projective line to study integral points on conic log $K$3 surfaces. In this article, similarly to Harpaz we will study a fibration of norm 1 tori; however the contrast between our adaptations are twofold \begin{enumerate}
\item In this article we have better control of certain "Selmer" and "dual Selmer" groups, compared to that of \cite{H19}.
\item In \cite{H19}, Harpaz is able gain unconditional results. However, in this article our results are conditional on the Schinzel Hypothesis (H) (see Section \ref{sec: Schinzel's Hypothesis}).
\end{enumerate} 
\begin{theorem}\label{thm: main thm}
Let $S_{0}$ be a finite set of places of $\mathbb{Q}$ containing $\infty$. Let $J$ be a non empty set and for each $i\in J$, let $c_{i},d_{i}\in \mathbb{Z}_{S_{0}}$ be coprime elements such that $\Delta_{i,j} :=c_{i}\frac{d_{j}}{c_{j}}-d_{i}\neq 0$ for $i\neq j$. Denote by $p_{i}(t)=c_{i}t+d_{i}$ and $p_{J'}=\prod\limits_{i\in J'}p_{i}$ for $J'\subseteq J$. For $a,b\in \mathbb{Z}_{S_{0}}\backslash \{0\}$ and a partition $J=A\cup B$, consider the scheme \[
\mathcal{U}:ap_{A}(t)x^2+bp_{B}(t)y^2=1\subseteq \mathbb{A}^{3}_{\mathbb{Z}_{S_{0}}}
\]over $\mathbb{Z}_{S_{0}}$. Let $\pi:\mathcal{U}\rightarrow \mathbb{A}^{1}, (x,y,t)\mapsto t$. Under the assumption of Schinzel's Hypothesis (H), if there exists an $S_{0}$-integral adelic point $(P_{v})=(x_{v},y_{v},t_{v})\in \mathcal{U}(\mathbb{A}_{S_{0}})$ such that
\begin{enumerate}
\item Condition (D) (See Section \ref{sec: condition (D)}) holds
\item $\valv(dp_{J}(t_{v}))\leq 1$ for all $v\not\in S_{0}$ and $\valtwo(dp_{J}(t_{v}))=1$ if $2\not\in S_{0}$
\item There exists a place $v\in S_{0}$ for which $-dp_{J}(t_{v})$ is a non-zero square at $\mathbb{Q}_{v}$
\item $P_{v}\in \mathcal{U}(\mathbb{A}_{\mathbb{Z}_{S_{0}}})^{\Br^{\text{vert}}(X,
\pi)}$
\end{enumerate} then $\mathcal{U}(\mathbb{Z}_{S_{0}})\neq \emptyset$.
\end{theorem}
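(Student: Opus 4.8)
The plan is to run the descent--fibration method of Swinnerton-Dyer \cite{SD95} in the form adapted by Harpaz \cite{H19} to fibrations of norm $1$ tori. Everything reduces to producing a single good fibre. Fix a finite set of places $T\supseteq S_0$ outside of which $\pi$ has good reduction and $a,b$ are $v$-adic units; by hypothesis $P_v\in\mathcal{U}(\mathbb{Z}_v)$ for every $v\notin S_0$. The first move is to choose $t_0\in\mathbb{Q}$, $v$-adically close to $t_v$ for all $v\in T$ and $v$-integral for $v\notin T$, such that for each $i\in J$ the value $p_i(t_0)$ is, up to sign and a unit supported on $T$, a single prime $\ell_i$, with the $\ell_i$ pairwise distinct and outside $T$ and with $dp_J(t_0)$ constrained as in hypothesis (2). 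This is exactly the content of Schinzel's Hypothesis (H) applied to the polynomials $p_i(t)$; the pairwise coprimality of the $p_i$ as polynomials --- forced by $\gcd(c_i,d_i)=1$ together with $\Delta_{i,j}\neq 0$ --- is what allows the $\ell_i$ to be chosen independently of one another.

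The second step is to check that the affine conic $\mathcal{U}_{t_0}\colon ap_A(t_0)x^2+bp_B(t_0)y^2=1$ is everywhere locally soluble. For $v\in T$ this follows from the approximation $t_0\approx t_v$ and the smoothness of $\pi$: the local point $P_v$ deforms to a $v$-adic point of $\mathcal{U}_{t_0}$, which is moreover $v$-integral when $v\notin S_0$. For $v\notin T\cup\{\ell_i\}$ the coefficients of $\mathcal{U}_{t_0}$ are $v$-adic units, so the conic has good reduction and a $\mathbb{Z}_v$-point by Hensel's lemma. The only genuinely new places are the $\ell_i$, at which exactly one of the two coefficients has odd valuation and local solubility amounts to a single Hilbert-symbol condition; hypotheses (2) and (3) --- in particular $\valv(dp_J(t_v))\le 1$ at every $v$ and the existence of a place of $S_0$ at which $-dp_J(t_v)$ is a nonzero square --- are precisely what force all of these symbols to vanish. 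Since $\mathcal{U}_{t_0}$ is a conic, Hasse--Minkowski already yields $\mathcal{U}_{t_0}(\mathbb{Q})\neq\emptyset$; the substance of the theorem is upgrading this to a point that is integral away from $S_0$.

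That upgrade is the descent step, and it is controlled by a Selmer-type computation parallel to the $2$-descent on genus $1$ curves in \cite{SD95}. To $\mathcal{U}_{t_0}$ one attaches an $\mathbb{F}_2$-vector space of ``Selmer'' classes, those cut out by the local integral points, together with its ``dual'' group; since $ap_A(t_0)\cdot bp_B(t_0)$ differs from $\prod_i\ell_i$ by a unit supported on $T$, the associated quadratic extension is ramified only inside $T\cup\{\ell_i\}$, which bounds both groups by the expected combinatorial quantity. This is where the tighter control announced over \cite{H19} comes in: Condition (D) pins down the $S_0$-part of these groups, the square place from hypothesis (3) strips off one further dimension, and the assumption that $(P_v)\in\mathcal{U}(\mathbb{A}_{\mathbb{Z}_{S_0}})^{\Br^{\mathrm{vert}}(X,\pi)}$ --- once the vertical Brauer classes are identified with the quaternion algebras built from the $p_i$ --- supplies through global reciprocity the one linear relation that kills the remaining potential obstruction. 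The Selmer group is thereby forced to vanish, so $\mathcal{U}_{t_0}(\mathbb{Z}_{S_0})\neq\emptyset$ and a fortiori $\mathcal{U}(\mathbb{Z}_{S_0})\neq\emptyset$.

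I expect the heart of the difficulty to lie in this final step: controlling the exact dimensions of the Selmer and dual Selmer groups after the Schinzel maneuver, ensuring that each newly created prime $\ell_i$ contributes precisely the dimension one predicts and nothing more, and verifying that the Brauer--Manin condition (4) really does collapse to the single reciprocity relation that is needed. Keeping track of the interaction of the $\ell_i$ through the Hilbert symbols, while handling the $S_0$-data via Condition (D) and exploiting the slack afforded by the coprimality and $\Delta_{i,j}\neq 0$ hypotheses, is the technical core of the argument and the point at which the comparison with \cite{H19} has to be made quantitative.
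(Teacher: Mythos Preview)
Your outline captures the broad architecture --- find a fibre via Schinzel, verify local solubility, run a Selmer argument --- but it misses the central mechanism of the proof, and in a couple of places misattributes which hypothesis does which job.

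\textbf{The main gap: no iterative shrinking.} You describe a single application of Schinzel's hypothesis to produce one $t_0$, followed by a Selmer computation in which Condition~(D), hypothesis~(3), and the Brauer--Manin orthogonality together ``force the Selmer group to vanish''. This is not how the argument works, and in general a single Schinzel step gives no such control. The paper's proof is genuinely iterative in the Swinnerton-Dyer style: one starts from a suitable partial adelic point over $S$, applies Schinzel to obtain an $S$-admissible $t_0$, and then computes the dual Selmer group $\hat{R}_{P_S}$. If this group is strictly larger than $\langle[-d][p_J]\rangle$, one does \emph{not} stop; instead one picks an unwanted element $x_0\in\hat{R}_{P_S}$ and a companion $x_1\in R_{P_S}$ (the inequality $\dim R_{P_S}>\dim\hat{R}_{P_S}$, guaranteed by the split place of hypothesis~(3), is what makes $x_1$ available), uses Condition~(D) to locate an index $i_x$ at which $x_1$ fails the $G_{i_x}$-constraint, and then invokes Chebotarev to choose a new prime $w$ with prescribed splitting behaviour for $aD^A_{i_x}$, $c_0D^{J_0}_{i_x}$, $c_1\hat{D}^{J_1}_{i_x}$. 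Only after enlarging $T$ to $T_w=T\cup\{w\}$ and re-running Schinzel does one obtain a new admissible $t_1$ whose dual Selmer group is strictly smaller (Lemma~\ref{lemma: strictly smaller selmer}). This loop is repeated until $\hat{R}$ is reduced to $\langle[-d][p_J]\rangle$, at which point Proposition~\ref{prop: harpaz result for hasse principle and size of selmer group} applies. Your sketch has no analogue of this Chebotarev-driven enlargement of $T$, and without it there is no reason the Selmer group of a first Schinzel fibre should be small.

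\textbf{Two misattributions.} First, local solubility at the new primes $u_i$ is \emph{not} delivered by hypotheses~(2) and~(3); it comes from hypothesis~(4). In the paper (Proposition~\ref{prop: can always find T-admissible point}) one shows $\langle aD^A_i,p_i(t_0)\rangle_{u_i}=\sum_{v\in T}\inv_v\mathcal{A}_i(P_v)$ by quadratic reciprocity, and this sum vanishes precisely because $(P_v)$ is orthogonal to the vertical Brauer classes $\mathcal{A}_i$. Second, hypothesis~(3) is not used to ``strip off one further dimension'' directly; its role is to guarantee a place in $S_{\text{split}}$, hence $\dim R_{P_T}-\dim\hat{R}_{P_T}\geq 1$, which is exactly the inequality needed to find the auxiliary element $x_1$ in the iterative step above. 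Finally, the target is not that the Selmer group vanish but that the dual Selmer group be generated by $[-dp_J]$; that is the hypothesis of Proposition~\ref{prop: harpaz result for hasse principle and size of selmer group}.
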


\begin{acknowledgements} This research took place while the author was visiting Université Sorbonne Paris Nord. He would like to thank all the members of the university for their hospitality and for providing ideal working conditions. He would also like to thank Yonatan Harpaz for suggesting the problem and for taking time to answer all of his questions. The author is indebted to Olivier Wittenberg for his immeasurable support, insights and explanations about the details behind the descent-fibration method; this research owes much to the conversations held with him.
\end{acknowledgements}

\begin{notation}
Given a number field $k$ we denote by $\Omega_{k}$ for the set of places of $k$. By a variety $V$ over a field $k$ we mean a scheme of finite type over $k$ which is smooth, separated and geometrically integral. Given an integral scheme $S$ of a field $k$ we denote by $k(S)$ for the function field of $S$.
\end{notation}
\section{Norm one tori}\label{sec: norm one tori} We give a brief recap of norm 1 tori, if the reader wants a more detailed exposition we refer them to \cite[\S 2.2]{H19}. For the rest of Section \ref{sec: norm one tori} fix a number field $k$, a finite set of places $S_{0}$ containing all archimedian places and let $d\in \mathcal{O}_{S_{0}}$ be a non-zero $S_{0}$-integer. Denote by $\mathcal{T}_{d}$ the $\mathcal{O}_{S_{0}}$-scheme \[ 
\mathcal{T}_{d}:x_{0}^2-dx_{1}^2=1.
\] For every $a\mid d$ we denote by $\mathcal{U}_{a,b}$ the $\mathcal{O}_{S_{0}}$-scheme \[
\mathcal{U}_{a,b}:ax_{0}^2+bx_{1}^2=1
\] where $b=-\frac{d}{a}$ and $(a,b)=1$ in $\mathcal{O}_{S_{0}}$. The action of $\mathcal{T}_{d}$ on $\mathcal{U}_{a,b}$ exhibits $\mathcal{U}_{a,b}$ as a torsor under $\mathcal{T}_{d}$. Let $K:=k(\sqrt{d})$ and $S:=S_{0}\cup \{\text{all places of $k$ which ramify in $K$}\}$. Further, let $T\subseteq \Omega_{K}$ be the set of places of $K$ lying above $S$.

\subsection{Selmer and Tate Shafarevich groups}\label{subsec: selmer and sha} Throughout Section \ref{sec: norm one tori}, let $\mathcal{T}:=\mathcal{T}_{d}\times_{\Spec \mathcal{O}_{S_{0}}} \Spec \mathcal{O}_{S}$ and $\hat{\mathcal{T}}:=\Hom_{\text{Sh}_{\et}}(\mathcal{T},\mathbb{G}_{m})$ where $\text{Sh}_{\et}$ denotes the category of étale sheaves.
\begin{definition}
Denote by $\Sha^{1}(\mathcal{T},S)$ the kernel\[
\Sha^{1}(\mathcal{T},S):=\ker\left(\HH^{1}_{\et}(\mathcal{O}_{S},\mathcal{T})\rightarrow \prod\limits_{v\in S}\HH^{1}_{\et}(k_{v},\mathcal{T}\otimes_{\mathcal{O}_{S}}k_{v})\right).
\]
\end{definition}
\begin{remark}
Given a class $[T]\in \HH^{1}_{\et}(\mathcal{O}_{S},\mathcal{T})$ we always have $T(\mathcal{O}_{v})\neq \emptyset$ for $v\not\in S$ i.e. the image of the map \[
\HH^{1}_{\et}(\mathcal{O}_{S},\mathcal{T})\rightarrow \HH^{1}_{\et}(k_{v},\mathcal{T}\otimes_{\mathcal{O}_{S}} k_{v})
\] is zero. This is beacuse the map $\HH^{1}_{\et}(\mathcal{O}_{v},\mathcal{T})\rightarrow \HH^{1}_{\et}(k_{v},\mathcal{T}\otimes_{\mathcal{O}_{S}}k_{v})$ factors through $f:\HH^{1}_{\et}(\mathcal{O}_{v},\mathcal{T})\rightarrow \HH^{1}(\mathbb{F}_{v},\mathcal{T}')$ where $\mathcal{T}'$ is the special fibre of $\mathcal{T}\rightarrow \Spec \mathcal{O}_{v}$. The map $f$ is an isomorphism and by Lang's Theorem \cite[Thm 4.4.17]{S98} $\HH^{1}(\mathbb{F}_{v},\mathcal{T}')=0$.
\end{remark}
\begin{definition}
Denote by $\Sha^{2}(\hat{\mathcal{T}},S)$ the kernel\[
\Sha^{2}(\hat{\mathcal{T}},S):=\ker\left(\HH^{2}_{\et}(\mathcal{O}_{S},\hat{\mathcal{T}})\rightarrow \prod\limits_{v\in S}\HH^{2}_{\et}(k_{v},\hat{\mathcal{T}}\otimes_{\mathcal{O}_{S}}k_{v})\right).
\]
\end{definition}
\begin{remark}  There exists: \begin{enumerate}
\item A perfect pairing $
\Sha^{1}(\mathcal{T},S)\times \Sha^{2}(\hat{\mathcal{T}},S)\rightarrow \mathbb{Q}/\mathbb{Z}
$.
\item The short exact sequence of étale sheaves \[
0\rightarrow \mathbb{Z}/2\mathbb{Z}\rightarrow \mathcal{T} \xrightarrow[]{\times 2}\mathcal{T}\rightarrow 0,
\] and the corresponding sequence in étale cohomology give rise to a map $\phi_{1}:\HH^{1}_{\et}(\mathcal{O}_{S},\mathcal{T})\rightarrow \HH^{1}_{\et}(\mathcal{O}_{S},\mathcal{T})$.
\item The short exact sequence of étale sheaves \[
0\rightarrow \hat{\mathcal{T}}\xrightarrow[]{2} \hat{\mathcal{T}} \rightarrow \mathbb{Z}/2\mathbb{Z}\rightarrow 0,
\] and the corresponding sequence in étale cohomology give rise to a map $\phi_{2}:\HH^{1}_{\et}(\mathcal{O}_{S},\mathbb{Z}/2\mathbb{Z})\rightarrow \HH^{2}_{\et}(\mathcal{O}_{S},\hat{\mathcal{T}})$.
\end{enumerate}
\end{remark}
\begin{definition}
We define the \emph{Selmer group} of $\mathcal{T}$ over $\mathcal{O}_{S}$ to be\[
\Sel(\mathcal{T},S):=\{x\in \HH^{1}(\mathcal{O}_{S},\mathcal{T}):\phi_{1}(x)\in \Sha^{1}(\mathcal{T},S)\},
\] and the \emph{dual Selmer group} of $\mathcal{T}$ over $\mathcal{O}_{S}$ to be \[
\Sel(\hat{\mathcal{T}},S):=\{x\in \HH^{1}(\mathcal{O}_{S},\mathbb{Z}/2\mathbb{Z}):\phi_{2}(x)\in \Sha^{2}(\hat{\mathcal{T}},S)\}.
\]
\end{definition}

\subsection{Duality and its consequences}\label{sec: duality and its consequences}
Throughout Section \ref{sec: duality and its consequences}, let $S'$ be any finite set of places containing $S$ such that $\Pic \mathcal{O}_{S'}=0$.
\begin{definition} For a $v\in S'$ denote by $V^{v}$,$V_{v}$ \[
V^{v},V_{v} := \HH^{1}_{\et}(k_{v},\mathbb{Z}/2\mathbb{Z})\cong k_{v}^{*}/(k_{v}^{*})^2.
\]
\end{definition}
\begin{definition} We define $I^{S'}$ and $I_{S'}$ to be \[
I^{S'},I_{S'}:=\mathcal{O}_{S'}^{*}/(O^{*}_{S'})^2,
\]
\end{definition}
\begin{definition}\label{defn: Hilbert symbol pairing}
We define a non-degenerate pairing \[
\langle - , - \rangle_{v}: V_{v}\times V^{v}\rightarrow \mathbb{Z}/2\mathbb{Z}, \ \ \ (a,b)\mapsto \langle a,b\rangle_{v}.
\]  where $\langle -,-\rangle_{v}$ is the Hilbert symbol at the place $v$. Moreover, we can extend this pairing to a non-degenerate pairing 
\[
\langle - , - \rangle_{S'}: V_{S'}\times V^{S'}\rightarrow \mathbb{Z}/2\mathbb{Z}, \ \ \ \left((v_{1},\dotsc,v_{|S'|}),(v'_{1},\dotsc,v'_{|S'|})\right)\mapsto \sum\limits_{v\in S'}\langle v_{i},v'_{i}\rangle_{v}.
\]
\end{definition}
\begin{remark} \leavevmode
\begin{enumerate}
\item As $S'$ contains all the real places, all places above 2 and $\Pic \mathcal{O}_{S'}=0$, we have that $I_{S'},I^{S'}\cong \HH^{1}_{\et}(\mathcal{O}_{S'},\mathbb{Z}/2\mathbb{Z})$ and localisation maps \begin{align*}
I_{S'}:=\HH^{1}_{et}(\mathcal{O}_{S'},\mathbb{Z}/2\mathbb{Z})\hookrightarrow \HH^{1}_{\et}(k_{v},\mathbb{Z}/2\mathbb{Z})=:V_{S'}\\
I^{S'}:=\HH^{1}_{et}(\mathcal{O}_{S'},\mathbb{Z}/2\mathbb{Z})\hookrightarrow \HH^{1}_{\et}(k_{v},\mathbb{Z}/2\mathbb{Z})=:V^{S'}
\end{align*}
\item $I^{S'}$ is the orthogonal complement of $I_{S'}$ via $\langle -,-\rangle_{S'}$ and vise versa.
\end{enumerate}
\end{remark}
\begin{definition}\label{defn: definition of Wv}
For each $v\in S'$ we define the subspace $W^{v}\subseteq V^{v}$ to be
\[ W^{v}:=\begin{cases} [d] \ &\text{if}\ v\in S,\\
\im\left(\mathcal{O}_{v}^{*}/(\mathcal{O}_{v}^{*})^{2}\rightarrow k_{v}^{*}/(k_{v}^{*})^2\right)\ &\text{if}\ v\in S'\backslash S.
\end{cases}\] 
We then define $W_{v}\subseteq V_{v}$ to be the orthogonal complement of $W^{v}$ with respect to $\langle -,- \rangle_{v}$. Let \[
W_{S'}:=\bigoplus_{v\in S'} W_{v}\ \ \ \ W^{S'}:=\bigoplus_{v\in S'} W^{v}.
\]
\end{definition}
\begin{remark}\label{remark: paring for W and I}
By construction we see that $W_{S'}$ is the orthogonal complement of $W^{S'}$ with respect to $\langle -,-\rangle_{S'}$ and vice versa. Furthermore, we have induced pairings
\[
    \langle -,-\rangle_{S'}^{1}:I_{S'}\times W^{S'}\rightarrow \mathbb{Z}/2\mathbb{Z}, \ \langle -,-\rangle_{S'}^{2}:W_{S'}\times I^{S'}\rightarrow \mathbb{Z}/2\mathbb{Z}.
\]
\end{remark}

\begin{proposition}[{\cite[Prop 2.2.4]{H19}}]\label{prop: Harpaz selmer conditions}
The Selmer group $\Sel(\mathcal{T},S)$ can be identified with \begin{enumerate}
    \item $I_{S'}\cap W_{S'}$,
    \item The left kernel of $\langle -,-\rangle_{S'}^{1}$,
    \item The left kernel of $\langle -,-\rangle_{S'}^{2}$.
\end{enumerate}
The dual Selmer group $\Sel(\hat{\mathcal{T}},S)$ can be identified with \begin{enumerate}
    \item $I^{S'}\cap W^{S'}$,
    \item The right kernel of $\langle -,-\rangle_{S'}^{1}$,
    \item The right kernel of $\langle -,-\rangle_{S'}^{2}$.
\end{enumerate}
\end{proposition}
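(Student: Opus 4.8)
The plan is to prove the two identifications $\Sel(\mathcal{T},S)\cong I_{S'}\cap W_{S'}$ and $\Sel(\hat{\mathcal{T}},S)\cong I^{S'}\cap W^{S'}$; the remaining equivalences are then formal. Indeed $\langle-,-\rangle_{S'}$ is non-degenerate and $W_{S'}=(W^{S'})^{\perp}$, $I_{S'}=(I^{S'})^{\perp}$ by Remark~\ref{remark: paring for W and I} and the orthogonality recorded just before it, so the left kernel of $\langle-,-\rangle^{1}_{S'}$ is $I_{S'}\cap(W^{S'})^{\perp}=I_{S'}\cap W_{S'}$, the left kernel of $\langle-,-\rangle^{2}_{S'}$ is $W_{S'}\cap(I^{S'})^{\perp}=W_{S'}\cap I_{S'}$, and both right kernels equal $W^{S'}\cap I^{S'}$; hence once the first description of each group is established, the three agree.

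I would establish the two identifications one place at a time. By Kummer theory for $\mu_{2}\cong\mathbb{Z}/2$ and $\Pic\mathcal{O}_{S'}=0$ one has $\HH^{1}_{\et}(\mathcal{O}_{S'},\mathbb{Z}/2)=I_{S'}=I^{S'}=\mathcal{O}_{S'}^{*}/(\mathcal{O}_{S'}^{*})^{2}$. Unwinding the definitions, and using that localisation commutes with $\phi_{1}$ and $\phi_{2}$, a class $x$ lies in $\Sel(\mathcal{T},S)$ exactly when $\phi_{1,v}(x_{v})=0$ in $\HH^{1}(k_{v},\mathcal{T})$ for every $v\in S$, and in $\Sel(\hat{\mathcal{T}},S)$ exactly when $\phi_{2,v}(x_{v})=0$ in $\HH^{2}(k_{v},\hat{\mathcal{T}})$ for every $v\in S$ (here $\phi_{1}$ is the map $\HH^{1}(\mathcal{O}_{S},\mathbb{Z}/2)\to\HH^{1}(\mathcal{O}_{S},\mathcal{T})$ induced by $\mathbb{Z}/2=\mathcal{T}[2]\hookrightarrow\mathcal{T}$, and $x_{v}$ denotes the image of $x$ in $V_{v}=\HH^{1}(k_{v},\mathbb{Z}/2)$); at the remaining places $v\in S'\setminus S$ the class $x$ is automatically unramified, which for these finite places already forces $x_{v}\in W_{v}\subseteq W^{v}$. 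It therefore remains to compute $\ker\phi_{1,v}$ and $\ker\phi_{2,v}$ inside $V_{v}$ for $v\in S$. From $1\to\mathcal{T}\to\Res_{K/k}\GG_{m}\to\GG_{m}\to1$ one gets $\HH^{1}(k_{v},\mathcal{T})\cong k_{v}^{*}/N(K\otimes_{k}k_{v})^{*}$ with $\phi_{1,v}$ the natural surjection from $k_{v}^{*}/(k_{v}^{*})^{2}$, and its kernel — the norm group modulo squares — is $(W^{v})^{\perp}=W_{v}$ by the identity $a\in N(K\otimes_{k}k_{v})^{*}\Leftrightarrow\langle a,d\rangle_{v}=0$. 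The exact sequences $0\to\mathbb{Z}/2\to\mathcal{T}\xrightarrow{\times 2}\mathcal{T}\to0$ and $0\to\hat{\mathcal{T}}\xrightarrow{2}\hat{\mathcal{T}}\to\mathbb{Z}/2\to0$ are Cartier dual, so by the compatibility of their connecting maps with local Tate duality $\HH^{0}(k_{v},\mathcal{T})\times\HH^{2}(k_{v},\hat{\mathcal{T}})\to\QQ/\ZZ$, and the Hilbert-symbol self-duality of $\HH^{1}(k_{v},\mathbb{Z}/2)$, the map $\phi_{2,v}$ is dual to the connecting map $\HH^{0}(k_{v},\mathcal{T})\to\HH^{1}(k_{v},\mathbb{Z}/2)$, whose image is precisely $\ker\phi_{1,v}$; hence $\ker\phi_{2,v}=(\ker\phi_{1,v})^{\perp}=W_{v}^{\perp}=W^{v}$. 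Collecting the local conditions gives $\Sel(\mathcal{T},S)=\{x\in I_{S'}:x_{v}\in W_{v}\text{ for all }v\in S'\}=I_{S'}\cap W_{S'}$, and in the same way $\Sel(\hat{\mathcal{T}},S)=I^{S'}\cap W^{S'}$.

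The first paragraph is soft; the substance is in the two local computations for $v\in S$, and their crux is the duality input producing $\ker\phi_{2,v}$ from $\ker\phi_{1,v}$. Two further points I would handle carefully, following \cite{H19}: identifying $\Sha^{1}(\mathcal{T},S)$ and $\Sha^{2}(\hat{\mathcal{T}},S)$ with the groups of classes that are locally trivial at \emph{all} places of $k$ (via the Lang-vanishing argument of Section~\ref{sec: norm one tori} and its $\HH^{2}(\hat{\mathcal{T}})$-analogue), which is what legitimises the local reformulation of the Selmer conditions above; and the behaviour at the places of residue characteristic $2$, where one must work with the cohomology of $\mu_{2}$, $\mathcal{T}$ and $\hat{\mathcal{T}}$ over $\mathcal{O}_{S}$ in the flat topology for the notion of being unramified to match the intended local subspace (in the cases of interest these places anyway belong to $S$).
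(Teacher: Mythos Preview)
The paper does not supply its own proof of this proposition; it is quoted directly from \cite[Prop~2.2.4]{H19} with only the surrounding definitions recalled. Your argument is the standard one and is correct: the reduction to the first identification via the orthogonality relations $I_{S'}=(I^{S'})^{\perp}$ and $W_{S'}=(W^{S'})^{\perp}$ is routine, and your local computations of $\ker\phi_{1,v}$ (as norms modulo squares, i.e.\ $\{a:\langle a,d\rangle_{v}=0\}=W_{v}$) and of $\ker\phi_{2,v}$ (by pairing local Tate duality against the Kummer boundary $\HH^{0}(k_{v},\mathcal{T})\to\HH^{1}(k_{v},\mathbb{Z}/2)$) are exactly what is required.

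One caveat worth flagging: your parenthetical reads $\phi_{1}$ as the map $\HH^{1}(\mathcal{O}_{S},\mathbb{Z}/2)\to\HH^{1}(\mathcal{O}_{S},\mathcal{T})$ induced by $\mathcal{T}[2]\hookrightarrow\mathcal{T}$, whereas the paper as written has $\phi_{1}:\HH^{1}(\mathcal{O}_{S},\mathcal{T})\to\HH^{1}(\mathcal{O}_{S},\mathcal{T})$ and places the Selmer group inside $\HH^{1}(\mathcal{O}_{S},\mathcal{T})$. Your reading is the one under which the identification with $I_{S'}\cap W_{S'}\subseteq\HH^{1}(\mathcal{O}_{S'},\mathbb{Z}/2)$ makes literal sense, so this looks like a slip in the paper's transcription of Harpaz's definitions rather than an error on your part.
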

The following is a key Proposition we will use to establish Theorem \ref{thm: main thm}.

\begin{proposition}[{\cite[Prop 2.3.1, Corollary 2.3.2]{H19}}]\label{prop: harpaz result for hasse principle and size of selmer group} If for all $v\not\in S_{0}$ we have $\valv(d)\leq 1$ and $\valv(d)=1$ if $v$ lies above $2$, then for all $a\mid d$ the $\mathcal{O}_{S_{0}}$-scheme $\mathcal{U}_{a,b}$ has an $\mathcal{O}_{S_{0}}$-point if and only if the $\mathcal{O}_{S}$ scheme $\mathcal{U}_{a,b}\times_{\Spec \mathcal{O}_{S_{0}}} \Spec \mathcal{O_{S}}$ has an $\mathcal{O}_{S}$-point. Moreover, if $\Sel(\hat{\mathcal{T}},S)$ is generated by $[d]$ then for every $a\mid d$ the $\mathcal{O}_{S_{0}}$ scheme $\mathcal{U}_{a,b}$ satisfies the $S_{0}$-integral Hasse principle.

\end{proposition}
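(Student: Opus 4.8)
Since this proposition is quoted from \cite[Prop 2.3.1, Corollary 2.3.2]{H19}, I take it as given. But here is how I would reconstruct the argument. The plan is to first establish the equivalence of having an $\mathcal{O}_{S_0}$-point and an $\mathcal{O}_S$-point, and then deduce the Hasse principle statement from the smallness of the dual Selmer group together with the duality pairing of Proposition \ref{prop: Harpaz selmer conditions}.

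For the first part, one direction is trivial: an $\mathcal{O}_{S_0}$-point gives an $\mathcal{O}_S$-point by base change. For the converse, suppose $\mathcal{U}_{a,b}\times_{\mathcal{O}_{S_0}}\mathcal{O}_S$ has an $\mathcal{O}_S$-point, i.e.\ $\mathcal{U}_{a,b}$ acquires a point after inverting the extra primes in $S\setminus S_0$. The torsor $\mathcal{U}_{a,b}$ under $\mathcal{T}_d$ defines a class in $\HH^1_{\et}(\mathcal{O}_{S_0},\mathcal{T}_d)$ which becomes trivial in $\HH^1_{\et}(\mathcal{O}_S,\mathcal{T}_d)$; I would show that under the hypothesis $\valv(d)\le 1$ for $v\notin S_0$ (and $=1$ above $2$), the localization map at each $v\in S\setminus S_0$ is injective on the relevant part, so triviality over $\mathcal{O}_S$ forces triviality — or more directly, use that $\mathcal{U}_{a,b}(\mathcal{O}_v)\neq\emptyset$ for all $v\notin S$ automatically (as in the Remark after the definition of $\Sha^1$, via Lang's theorem on the special fibre, which is a smooth conic over $\mathbb{F}_v$ once $\valv(d)\le 1$), combined with $\mathcal{U}_{a,b}(\mathcal{O}_v)\ne\emptyset$ for $v\in S\setminus S_0$ coming from the $\mathcal{O}_S$-point, to conclude $\mathcal{U}_{a,b}$ is everywhere locally soluble over $\mathcal{O}_{S_0}$; then apply a local-global principle for torsors under the norm-one torus $\mathcal{T}_d$ (this is where one invokes that $\mathcal{T}_d$ is a torus of this special shape and the Tate--Nakayama / Poitou--Tate type duality packaged in the perfect pairing $\Sha^1(\mathcal{T},S)\times\Sha^2(\hat{\mathcal{T}},S)\to\mathbb{Q}/\mathbb{Z}$).

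For the second part, assume further that $\Sel(\hat{\mathcal{T}},S)$ is generated by $[d]$. By Proposition \ref{prop: Harpaz selmer conditions} the dual Selmer group is $I^{S'}\cap W^{S'}$ and equals the right kernel of $\langle-,-\rangle^1_{S'}$; dually $\Sel(\mathcal{T},S)=I_{S'}\cap W_{S'}$ is its orthogonal partner. One knows that the class $[d]$ always lies in $\Sel(\hat{\mathcal{T}},S)$, so the hypothesis says this group is as small as possible. Now suppose $\mathcal{U}_{a,b}$ is everywhere $S_0$-locally soluble; its class in $\HH^1_{\et}(\mathcal{O}_{S_0},\mathcal{T}_d)$, pulled back to $\HH^1_{\et}(\mathcal{O}_S,\mathcal{T})$, lies in $\Sha^1(\mathcal{T},S)=\Sel(\mathcal{T},S)\cap(\text{everything})$ — more precisely, local solubility at all places of $S_0$ together with the automatic local solubility elsewhere places the class in $\Sha^1(\mathcal{T},S)$. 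The perfect pairing then identifies the obstruction to this class vanishing with a functional on $\Sha^2(\hat{\mathcal{T}},S)\supseteq\phi_2(\Sel(\hat{\mathcal{T}},S))$; since $\Sel(\hat{\mathcal{T}},S)=\langle[d]\rangle$ and the class $[d]$ pairs trivially with any locally-soluble torsor under $\mathcal{T}_d$ (because $[d]$ corresponds to the class whose vanishing is exactly the splitting of $K/k$, which is built into $\mathcal{U}_{a,b}$ being a torsor under the $\sqrt d$-norm torus), the obstruction vanishes, so the class is trivial and $\mathcal{U}_{a,b}(\mathcal{O}_S)\ne\emptyset$; by the first part $\mathcal{U}_{a,b}(\mathcal{O}_{S_0})\ne\emptyset$.

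The main obstacle is the bookkeeping in the second part: one must carefully check that a torsor $\mathcal{U}_{a,b}$ that is everywhere $S_0$-locally soluble really does land in $\Sha^1(\mathcal{T},S)$ after base change to $\mathcal{O}_S$ (the places in $S\setminus S_0$ need attention, which is exactly why the hypothesis $\valv(d)\le 1$ and $\valv(d)=1$ above $2$ is imposed — it guarantees good reduction / a smooth conic special fibre and hence automatic local points there), and then that pairing against the single generator $[d]$ of the dual Selmer group indeed gives zero. Everything else is either formal (base change, the trivial direction) or a direct appeal to the duality already recorded in the excerpt.
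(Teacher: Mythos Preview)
The paper does not supply its own proof of this proposition; it is quoted verbatim from \cite[Prop 2.3.1, Corollary 2.3.2]{H19} and used as a black box throughout. You correctly identify this at the outset of your proposal, so there is no argument in the present paper to compare your reconstruction against.
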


\section{Condition (D)}\label{sec: condition (D)}
We formulate Condition (D), this property is crucial as it allows one to be able to kill certain elements in the Selmer and dual Selmer group. Through Section \ref{sec: condition (D)} $a,b,d_{i},c_{i}$ and $p_{i}$ will be as in Theorem \ref{thm: main thm}.

\begin{definition} Denote by $G$ the abelian group \[
G:=\mathbb{Q}^{*}/(\mathbb{Q}^{*})^2\bigoplus \spann_{\mathbb{F}_{2}}\{[p_{i}]:i\in J\} 
\] where $[p_{i}]$ is a formal symbol. We will write elements of $G$ as $[c][p_{J'}]$ where and $J'\subseteq J$ and \[
[c][p_{J'}]:=c\prod\limits_{i\in J'}p_{i}.
\] 
\end{definition}
\begin{notation}
Let $J'\subseteq J$ then we denote by $(J')^{c}$ the complement of $J'$ in $J$ i.e. $(J')^{c}:=J\backslash J'$.
\end{notation}
\begin{definition}\label{defn: lower G}
Let $i\in J$, we denote by $G_{i}\subseteq G$ the subgroup containing the elements $[c][p_{J'}]$ such that:
\begin{enumerate}
\item $\left[cp_{J'}(-\frac{d_{i}}{c_{i}})\right]\in \left\{1,[ap_{A}(-\frac{d_{i}}{c_{i}})]\right\}$ if $i\not \in J'$ and $i\not \in A$, \\
\item  $\left[cp_{J'}(-\frac{d_{i}}{c_{i}})\right]\in \left\{1,[bp_{B}(-\frac{d_{i}}{c_{i}})]\right\}$ if $i\not \in J'$ and $i\in A$,
\item $\left[cdp_{(J')^{c}}(-\frac{d_{i}}{c_{i}})\right]\in \left\{1,[ap_{A}(-\frac{d_{i}}{c_{i}})]\right\}$ if $i \in J'$ and $i\not \in A$, \\
\item  $\left[cdp_{(J')^{c}}(-\frac{d_{i}}{c_{i}})\right]\in \left\{1,[bp_{B}(-\frac{d_{i}}{c_{i}})]\right\}$ if $i \in J'$ and $i\in A$.
\end{enumerate}

\end{definition}
\begin{definition}\label{defn: upper G}
Let $i\in J$, we denote by $G^{i}\subseteq G$ the subgroup containing the elements $[c][p_{J'}]$ such that:
\begin{enumerate}
\item $\left[cp_{J'}(-\frac{d_{i}}{c_{i}})\right]\in \left\{1,[ap_{A}(-\frac{d_{i}}{c_{i}})]\right\}$ if $i\not \in J'$ and $i\not \in A$, \\
\item  $\left[cp_{J'}(-\frac{d_{i}}{c_{i}})\right]\in \left\{1,[bp_{B}(-\frac{d_{i}}{c_{i}})]\right\}$ if $i\not \in J'$ and $i\in A$,
\item $\left[-cdp_{(J')^{c}}(-\frac{d_{i}}{c_{i}})\right]\in \left\{1,[ap_{A}(-\frac{d_{i}}{c_{i}})]\right\}$ if $i\in J'$ and $i\not \in A$, \\
\item  $\left[-cdp_{(J')^{c}}(-\frac{d_{i}}{c_{i}})\right]\in \left\{1,[bp_{B}(-\frac{d_{i}}{c_{i}})]\right\}$ if $i\in J'$ and $i\in A$.
\end{enumerate}
\end{definition}
\begin{definition}
Denote by $G_{D}$ the intersection $G_{D}:=\bigcap_{i\in J}G_{i}$. Similarly, denote by $G^{D}$ the intersection $G^{D}:=\bigcap_{i\in J}G^{i}$.
\end{definition}
\begin{condition} The group $G_{D}$ is generated by $[a][p_{A}]$ and $[d][p_{J}]$ and $G^{D}$ is generated by $[-d][p_{J}]$.
\end{condition}
It will be easier for future calculations to use the notation of Harpaz \begin{notation} Let $d=ab$ and $J'\subseteq J$, then for $i\in J$ denote by \[
D_{i}^{J'}:=\begin{cases}
p_{J'}\left(-\frac{d_{i}}{c_{i}}\right) &\text{ if } i \not\in J',\\
dp_{(J')^{c}}\left(-\frac{d_{i}}{c_{i}}\right) &\text{ if } i \in J'.
\end{cases} \ \ \hat{D}_{i}^{J'}:=\begin{cases}
p_{J'}\left(-\frac{d_{i}}{c_{i}}\right) &\text{ if } i \not\in J',\\
-dp_{(J')^{c}}\left(-\frac{d_{i}}{c_{i}}\right) &\text{ if } i \in J'.
\end{cases}
\] Then Definitions \ref{defn: lower G},\ref{defn: upper G} can be reformulated in the above notation: $G_{i}$ is the subgroup of $G$ containing the elements $[c][p_{J'}]$ such that \[
[cD^{J'}_{i}]\in \langle [aD^{A}_{i}]\rangle
\] and $G^{i}$ is the subgroup of $G$ containing the elements $[c][p_{J'}]$ such that \[
[c\hat{D}^{J'}_{i}]\in \langle [aD^{A}_{i}]\rangle.
\]

\end{notation}
\section{Schinzel Hypothesis}\label{sec: Schinzel's Hypothesis}
\begin{conjecture}[Schinzel's Hypothesis (H)]\label{conjecture: Schinzel's Hypothesis}
For every finite collection $\{f_{1},\dotsc,f_{k}\}$ of non-constant irreducible polynomials over the integers with positive leading coefficients, one of the following conditions holds:\begin{enumerate}
    \item There is an integer $m$ (called a fixed divisor) which always divides the product $f_{1}(n)\dotsc f_{k}(n)$ for $n\in \mathbb{Z}$. Or, equivalently: There exists a prime $p$ such that for every $n$ there exists an $i$ such that $f_{i}(n)\equiv 0$ (mod $p$).
    \item There are infinitely many positive integers $n$ such that $f_{1}(n),\dotsc,f_{k}(n)$ are simultaneously prime numbers.
\end{enumerate}
\end{conjecture}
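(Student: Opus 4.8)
Schinzel's Hypothesis (H) is one of the most famous open problems in analytic number theory; it is recorded above as a conjecture precisely because the results of this paper take it as an input and do not establish it. What follows is therefore not a proof but an account of the only known line of attack and of the obstruction that has so far defeated every instance of it beyond the classical cases.

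The natural framework is sieve theory. Write $\omega(p)=\#\{n\bmod p:\ p\mid f_{1}(n)\cdots f_{k}(n)\}$; condition (1) of the statement is exactly the assertion that $\omega(p)<p$ for all primes $p$, i.e. that there is no fixed prime divisor, and under this hypothesis the Hardy--Littlewood/Bateman--Horn heuristic predicts that $\#\{n\le X:\ f_{1}(n),\dots,f_{k}(n)\text{ all prime}\}$ is asymptotic to $\mathfrak{S}\,X/(\log X)^{k}$ with singular series $\mathfrak{S}=\prod_{p}(1-\omega(p)/p)(1-1/p)^{-k}>0$. To deduce (2) it would suffice to prove a lower bound of this order of magnitude. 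The pieces one would assemble are: (i) the Bombieri--Vinogradov theorem, or its analogue for each $f_{i}$, to control $\#\{n\le X:\ f_{i}(n)\equiv 0\pmod q\}$ on average over moduli $q\le X^{1/2-\varepsilon}$; (ii) a Selberg or combinatorial lower-bound sieve applied simultaneously to the $k$ sequences $(f_{i}(n))_{n\le X}$, producing many $n$ for which each $f_{i}(n)$ has no prime factor below $X^{\theta}$; and (iii) an argument discarding the $n$ whose values have a prime factor in the intermediate range $[X^{\theta},X]$. One would moreover need the refinement of (H) in which $n$ is restricted to a prescribed arithmetic progression — the form actually used in the descent-fibration argument, where primes with prescribed behaviour at the finitely many bad places are required — which follows from the unrestricted statement once one checks that the modified singular series stays positive; this is the only genuinely routine part.

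The known special cases pin down where the difficulty lies. For $k=1$ and $\deg f_{1}=1$ the conjecture is Dirichlet's theorem; for $k=1$ and $\deg f_{1}\ge 2$ it is already open, the classical test case being $f_{1}=n^{2}+1$. When all the $f_{i}$ are linear and pairwise non-proportional, the theorem of Green, Tao and Ziegler proves (H) by passing to a pseudorandom majorant and invoking the inverse theory of the Gowers uniformity norms; since the $p_{i}(t)=c_{i}t+d_{i}$ of Theorem \ref{thm: main thm} are linear, one might hope to make parts of the present work unconditional, but the descent-fibration method also forces one to find primes taking prescribed values modulo squares and to handle products such as $\prod_{i\in J'}p_{i}(t)$, which genuinely leaves the linear regime.

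The decisive obstacle I do not see how to remove is Selberg's \emph{parity problem}: sieve methods are insensitive to the parity of the number of prime factors of an integer, so a pure sieve detecting $n$ for which $f_{1}(n)\cdots f_{k}(n)$ has few prime factors cannot by itself force that number to be exactly $k$ rather than $k+1$ or more. Every unconditional result producing prime values of a polynomial of degree $\ge 2$ — Iwaniec's almost-prime bound for $n^{2}+1$, Friedlander--Iwaniec for $a^{2}+b^{4}$, Heath-Brown for $a^{3}+2b^{3}$ — escapes parity only by exploiting bilinear or multiplicative structure special to that polynomial, and no such structure is available for an arbitrary finite family $\{f_{1},\dots,f_{k}\}$. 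Overcoming this in general is exactly the step that has resisted all attempts, and its absence is the reason Theorem \ref{thm: main thm} must be stated conditionally on Hypothesis (H).
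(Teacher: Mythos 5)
You are right that this statement is Schinzel's Hypothesis (H), a famous open conjecture; the paper offers no proof and none is expected --- it is stated as a hypothesis on which Theorem \ref{thm: main thm} is explicitly conditional. Your decision not to manufacture a ``proof'' is the correct one, and your account of the Bateman--Horn heuristic, the sieve-theoretic framework, and the parity obstruction is a fair summary of why the conjecture is out of reach.

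One claim in your discussion is genuinely wrong and worth correcting, because it bears directly on whether the paper's main theorem could be made unconditional. You assert that ``when all the $f_i$ are linear and pairwise non-proportional, the theorem of Green, Tao and Ziegler proves (H).'' It does not. Green--Tao--Ziegler establishes the expected asymptotics for systems of affine-linear forms in \emph{several} variables of \emph{finite complexity}, a condition which excludes any two distinct forms in a single variable $n$: the system $\{n,\,n+2\}$ is the twin prime conjecture and is the prototypical case left open by their work. The polynomials $p_i(t)=c_it+d_i$ of Theorem \ref{thm: main thm} are exactly a one-variable family of this excluded type, so the linear case of (H) relevant here is not known, and this --- not the need for values in prescribed square classes --- is why the paper cannot invoke Green--Tao--Ziegler to remove the hypothesis. (By contrast, Harpaz's unconditional results in \cite{H19}, alluded to in the introduction, are obtained by replacing Schinzel's hypothesis with additive-combinatorial input of Matthiesen type in a genuinely multivariable setting, which is why they escape this obstruction.) The rest of your survey, including the equivalence with the fixed-divisor formulation in condition (1) and the remark that the paper actually uses Serre's variant $(\mathrm{H}_1)$, which follows from (H) by \cite[Lemma 4.1]{CTSD94}, is accurate.
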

\begin{remark} 
Consider a finite collection of linear polynomials $\{f_{1}(t),\dotsc,f_{k}(t)\}$ where $f_{i}(t)=c_{i}t+d_{i}$ and \[\Delta_{j,k}:=c_{j}d_{k}-c_{k}d_{j}\neq 0\] for $j\neq k$. Then under Hypothesis $(\text{H})$ there exists infinitely many integers $n$ such that $f_{1}(n),\dotsc ,f_{k}(n)$ are distinct primes.
\end{remark}

We will require a slightly different version of (H) due to Serre.
\begin{conjecture}[Serre ($\text{H}_1$)]\label{conjecture: Schinzel's Hypothesis H1} Let $k$ be a number field, $f_{i}(t)$ irreducible polynomials over $k$. Let $S$ be a finite set of places of $k$, containing all \begin{enumerate}
    \item archiemedean places,
    \item finite places $v$ where one of $f_{i}$ does not have $v$-integral coefficients or has all its coefficients divisible by $v$,
    \item all finite places above a prime $p$ less than or equal to the degree of the polynomial $N_{k/\mathbb{Q}}(f_{i}(t))$.
\end{enumerate} Given elements $\lambda_{v}\in k_{v}$ at finite places $v\in S$ one may find $\lambda \in k$, integral away from $S$, arbitrarily close to each $\lambda_{v}$ for the $v$-adic topology for the $v$-adic topology for $v\in S$ finite, arbitrarily big in the archidmedian completions $k_{v}$, and such that for each $i$, $f_{i}(\lambda)\in k$ is a unit in $k_w$ for all place $w\not\in S$ except perhaps one place $w_{i}$, where it is a uniformizer.
\end{conjecture}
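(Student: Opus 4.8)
The plan is to deduce Conjecture~\ref{conjecture: Schinzel's Hypothesis H1} from Hypothesis (H) (Conjecture~\ref{conjecture: Schinzel's Hypothesis}); the deduction is classical, essentially due to Serre, but it is worth recording its shape. The only tool available is (H), which manufactures simultaneously prime values of polynomials at a common integer argument, so the whole game is to engineer polynomials whose prime values translate into the sought element $\lambda$. I would first treat the case $k=\QQ$, which carries all of the content, and reduce a general number field to it afterwards. So fix the finite set $S$ and the local elements $\lambda_{v}\in\QQ_{v}$ at the finite places $v\in S$, and assume, as is automatic in the intended application, that $f_{i}(\lambda_{v})\neq 0$ for every $i$ and every such $v$. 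Writing $n_{i,v}=v(f_{i}(\lambda_{v}))<\infty$, I would choose for each finite $v\in S$ an integer $N_{v}$ exceeding every $n_{i,v}$ and exceeding the required order of approximation, let $M$ be the product of the $v^{N_{v}}$ over the finite $v\in S$, and use the Chinese Remainder Theorem to produce $\lambda_{0}\in\ZZ_{S}$ with $v(\lambda_{0}-\lambda_{v})\geq N_{v}$ for every finite $v\in S$. The key manoeuvre is then the affine substitution $t\mapsto\lambda_{0}+Mt$ together with stripping off the part of $f_{i}$ supported on $S$: letting $e_{i}$ be the product of the $v^{n_{i,v}}$ over the finite $v\in S$, one forms $Q_{i}(t):=f_{i}(\lambda_{0}+Mt)/e_{i}$.

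The next step is to check that the $Q_{i}$ meet the hypotheses of (H). Each $Q_{i}$ lies in $\ZZ[t]$: since the $f_{i}$ are $v$-integral for $v\in S$, the constant term $f_{i}(\lambda_{0})$ has $v$-adic valuation exactly $n_{i,v}$ at each finite $v\in S$, hence is exactly divisible by $e_{i}$, while every higher coefficient is divisible by a sufficiently high power of $M$ and therefore by $e_{i}$. The $Q_{i}$ are irreducible over $\QQ$, being obtained from the irreducible $f_{i}$ by an invertible affine change of variable followed by division by a nonzero constant, and they are pairwise non-associate because the $f_{i}$ are; after multiplying some of them by $-1$, which alters $f_{i}(\lambda)$ only by the $S$-unit $\pm1$, their leading coefficients become positive. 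Finally $\prod_{i}Q_{i}$ has no fixed prime divisor: at a prime $\ell\in S$ one has $Q_{i}(t)\equiv f_{i}(\lambda_{0})/e_{i}\pmod{\ell}$, an $\ell$-adic unit, so $\ell$ divides no value $Q_{i}(t)$; and at a prime $\ell\notin S$ the map $t\mapsto\lambda_{0}+Mt$ is a bijection of $\FF_{\ell}$ while $\ell\nmid e_{i}$, so a fixed divisor $\ell$ would force $\prod_{i}f_{i}$ to vanish identically on $\FF_{\ell}$, which is impossible once $S$ contains every sufficiently small prime, and this is precisely what hypothesis (3) of Conjecture~\ref{conjecture: Schinzel's Hypothesis H1} guarantees. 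Alternative (1) of (H) being excluded, (H) supplies infinitely many positive integers $t_{0}$ with $Q_{1}(t_{0}),\dots,Q_{k}(t_{0})$ simultaneously prime; choosing such a $t_{0}$ large and away from the finitely many coincidences of values, the element $\lambda:=\lambda_{0}+Mt_{0}$ lies in $\ZZ_{S}$, is $v$-adically close to $\lambda_{v}$ at the finite $v\in S$, is large at $\infty$, and satisfies $f_{i}(\lambda)=\pm e_{i}q_{i}$ with the $q_{i}$ distinct primes outside $S$, so $f_{i}(\lambda)$ is a unit at every place $w\notin S$ other than $q_{i}$ and a uniformizer at $q_{i}$, which is $(\text{H}_{1})$ over $\QQ$.

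The remaining ingredient, and the one I expect to be the real obstacle, is the passage from $\QQ$ to a general number field $k$: restricting $\lambda$ to $\QQ$ is not allowed, since $\lambda_{v}\in k_{v}$ need not be approximable by rationals at places that are not split. Instead I would write $\lambda=\sum_{j}x_{j}\omega_{j}$ with respect to an $S$-integral basis $\omega_{1},\dots,\omega_{r}$ of $k/\QQ$ (enlarging $S$ if necessary), note that $(f_{i}(\lambda))$ is a degree-one prime ideal of $\mathcal{O}_{k,S}$ as soon as the norm form $h_{i}(x_{1},\dots,x_{r}):=N_{k/\QQ}(f_{i}(\lambda))\in\QQ[x_{1},\dots,x_{r}]$ takes a value equal to a rational prime times an $S$-unit, and then run the congruence bookkeeping of the previous paragraphs on the coordinates $x_{j}$. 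This reduces $(\text{H}_{1})$ over $k$ to an approximation-compatible several-variable form of Hypothesis (H) applied to the $h_{i}$ (which are squarefree up to constants) — here the degree condition (3) is exactly what rules out a fixed prime divisor — and that several-variable form is itself deduced from the one-variable statement of Conjecture~\ref{conjecture: Schinzel's Hypothesis} by restricting to a sufficiently general affine line through a point prescribed by the finitely many $S$-adic and archimedean constraints, invoking Hilbert irreducibility to keep the restricted polynomials irreducible. Everything outside this base-field descent — the finite-place approximation, the archimedean size, and the bookkeeping of ``unit away from $S$ except at one uniformizing place'' — is then formal.
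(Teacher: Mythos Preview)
The statement you are addressing is a \emph{conjecture}, not a theorem, and the paper does not attempt to prove it. The only relevant content in the paper is the remark immediately following it: ``Conjecture~\ref{conjecture: Schinzel's Hypothesis} implies Conjecture~\ref{conjecture: Schinzel's Hypothesis H1}, \cite[Lemma 4.1]{CTSD94}.'' That is, the paper simply \emph{cites} Colliot-Th\'el\`ene--Swinnerton-Dyer for the implication $(\text{H})\Rightarrow(\text{H}_{1})$ and moves on; there is no in-house argument to compare against.

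Your proposal is a sketch of precisely that cited implication, and its shape --- the affine substitution $t\mapsto\lambda_{0}+Mt$ over $\QQ$, stripping the $S$-part, checking absence of a fixed divisor via the degree bound in hypothesis~(3), and then descending from a general number field to $\QQ$ via norm forms and a generic affine line --- is the classical route taken in \cite{CTSD94}. So you have reconstructed, in outline, the argument the paper outsources. The one place where your sketch is genuinely thin is the number-field reduction: the norm form $N_{k/\QQ}(f_{i}(\sum x_{j}\omega_{j}))$ need not be irreducible over $\QQ$ (it can split into conjugate factors), and the bookkeeping ensuring that a prime value of the norm forces $f_{i}(\lambda)$ to be a uniformizer at exactly one place of $k$ (necessarily of degree one) requires a bit more care than you indicate; but this is handled in the cited reference and is not something the present paper claims to supply.
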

\begin{remark}
Conjecture \ref{conjecture: Schinzel's Hypothesis} implies Conjecture \ref{conjecture: Schinzel's Hypothesis H1}, \cite[Lemma 4.1]{CTSD94}. Because of this we will denote by (H) the conjectures (H) and $(\text{H}_{1})$ as they are equivalent.
\end{remark}
\section{Vertical Brauer group}\label{sec: vertical Brauer group}
In this section we will determine the vertical Brauer group of the scheme \[
U:ap_{A}(t)x^2+bp_{B}(t)y^2=1\subseteq \mathbb{A}^{3}_{\mathbb{Q}}
\] with respect to the morphism $\pi:U\rightarrow \mathbb{A}^{1}, (x,y,t)\mapsto t$ and keeping notation as in Theorem \ref{thm: main thm}.

\begin{definition}
Let $f:X\rightarrow Y$ be a morphism of varieties over a field $k$. The \emph{vertical Brauer group} $\Br^{\text{vert}}(X,f)$ is defined as:
\[
\Br^{\text{vert}}(X,f):=\Br(X)\cap f^{*}\Br(k(Y))\subseteq \Br(k(X)).
\]
\end{definition}
\begin{definition}
A scheme $S$ of finite type over a perfect field $k$ is called \emph{split} if $S$ contains an irreducible component of multiplicity 1 which is geometrically irreducible.
\end{definition}
To find the the vertical Brauer group of $U$ we will invoke a consequence of Grothendieck's purity theorem.
\begin{proposition}[{\cite[Thm 3.7.3]{CS21}}]\label{prop: residue maps}
Let $V$ be a smooth integral variety over a field $k$ of characteristic $0$. Then there exists an exact sequence \[
0\rightarrow \Br V\rightarrow \Br k(V) \xrightarrow[]{\partial_{D}} \bigoplus_{D}\HH^{1}_{\et}(k(D),\mathbb{Q}/\mathbb{Z})
\] where $D$ ranges over irreducible divisors of $V$. We call the maps $\partial_{D}$ \emph{residue maps}.
\end{proposition}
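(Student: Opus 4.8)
The plan is to prove this as Grothendieck's purity theorem for the Brauer group — this is exactly \cite[Thm 3.7.3]{CS21}, and I would recall the architecture of the argument. It rests on three inputs: injectivity of $\Br V\to \Br k(V)$, a purity statement at each discrete valuation ring $\mathcal{O}_{V,D}$, and purity in codimension $\geq 2$. The characteristic $0$ hypothesis is used throughout: it guarantees that every torsion class in the étale cohomology sheaves involved has order prime to the residue characteristics, so that Grothendieck's cohomological (absolute) purity for smooth pairs applies with no caveats.

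First I would settle injectivity together with the finiteness that makes the target a direct sum. Since $V$ is regular and integral, $\Br V\hookrightarrow \Br k(V)$: by Auslander--Goldman each $\Br \mathcal{O}_{V,x}\to \Br k(V)$ is injective for $x$ a point of codimension one, and $\Br V$ embeds into the intersection of these; this gives exactness at $\Br V$. For finiteness, any $\alpha\in\Br k(V)$ is represented by an Azumaya algebra over some non-empty open $U\subseteq V$ (clear denominators), and $V\setminus U$ contains only finitely many irreducible divisors, so $\partial_{D}(\alpha)=0$ for all but finitely many $D$ and the assignment $\alpha\mapsto (\partial_{D}(\alpha))_{D}$ does land in $\bigoplus_{D}\HH^1_{\et}(k(D),\mathbb{Q}/\mathbb{Z})$.

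Next I would construct $\partial_{D}$ and prove the local purity identity $\ker(\partial_{D})=\Br\mathcal{O}_{V,D}$ inside $\Br k(V)$. For an irreducible divisor $D$ the ring $\mathcal{O}_{V,D}$ is a discrete valuation ring with fraction field $k(V)$ and residue field $k(D)$; passing to the henselization $\mathcal{O}_{V,D}^{h}$ with fraction field $K^{h}$, one has the classical split exact sequence $0\to\Br\mathcal{O}_{V,D}^{h}\to\Br K^{h}\xrightarrow{\partial}\HH^1_{\et}(k(D),\mathbb{Q}/\mathbb{Z})\to 0$ together with $\Br\mathcal{O}_{V,D}^{h}\cong \Br k(D)$, both valid since $k(D)$ has characteristic $0$, and one sets $\partial_{D}$ equal to the composite $\Br k(V)\to\Br K^{h}\xrightarrow{\partial}\HH^1_{\et}(k(D),\mathbb{Q}/\mathbb{Z})$. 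The inclusion $\Br\mathcal{O}_{V,D}\subseteq\ker\partial_{D}$ is immediate; for the reverse inclusion I would invoke the purity theorem for the regular local ring $\mathcal{O}_{V,D}$, deduced from Grothendieck's cohomological purity for the smooth pair $(\Spec\mathcal{O}_{V,D},\Spec k(D))$ over $k$, whose localization sequence in étale cohomology identifies the cokernel of $\Br\mathcal{O}_{V,D}\to\Br k(V)$ with a subgroup of $\HH^1_{\et}(k(D),\mathbb{Q}/\mathbb{Z})$ compatibly with $\partial_{D}$.

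Finally I would globalize to obtain exactness at $\Br k(V)$: if $\alpha\in\Br k(V)$ satisfies $\partial_{D}(\alpha)=0$ for every $D$, then by the previous step $\alpha\in\Br\mathcal{O}_{V,D}$ for every codimension-one point, hence $\alpha$ extends to an Azumaya algebra over an open $U\subseteq V$ with $\mathrm{codim}_{V}(V\setminus U)\geq 2$; applying purity in codimension $\geq 2$ for the smooth $k$-variety $V$ — the isomorphism $\Br V\xrightarrow{\sim}\Br U$ obtained by iterating Grothendieck's cohomological purity so that closed subsets of codimension $\geq 2$ contribute nothing to $\HH^2_{\et}(-,\mathbb{G}_{m})$ — one concludes $\alpha\in\Br V$. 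The main obstacle is not any single computation but assembling the cohomological purity inputs in the right generality (absolute purity for smooth pairs and its codimension-$\geq 2$ consequence) and verifying that the residue maps defined henselization-by-henselization match the global localization sequence; over a field of characteristic $0$ all of this is classical (SGA~4 and Grothendieck's \emph{Le groupe de Brauer}), so the proof is an orchestration of known results rather than a new argument.
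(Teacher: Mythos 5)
The paper offers no proof of this proposition --- it is quoted verbatim from \cite[Thm 3.7.3]{CS21} --- so there is no in-paper argument to compare against. Your outline (injectivity of $\Br V\to \Br k(V)$ from regularity, residues defined via the henselization of $\mathcal{O}_{V,D}$ at each codimension-one point, purity for these discrete valuation rings to identify $\ker\partial_{D}$ with $\Br\mathcal{O}_{V,D}$, and absolute purity in codimension $\geq 2$ to descend from an open set with small complement back to $V$) is precisely the standard architecture of the proof in that reference, and it is correct as stated for a smooth integral variety over a field of characteristic $0$.
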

\begin{remark}
Let $V$ be as in Proposition \ref{prop: residue maps}. We denote by $(f,g)$ the Azumaya algebra \[
(f,g):=\bigoplus_{0\leq a,b<2}u^{a}v^{b}k(X) \text{ where } u^{2}=f, v^{b}=g, vu=-uv .
\] Let $D$ be an integral divisor on $V$, then by Kummer theory \[
\HH^{1}_{\et}(k(D),\mathbb{Z}/2\mathbb{Z})\cong k(D)^{*}/(k(D)^{*})^2.
\] Given a class $(f,g)\in \Br k(V)$ we can give an explicit description of $\partial_{D}((f,g))$ in $k(D)^{*}/(k(D)^{*})^2$ via the tame symbol \cite[Ex 7.1.5, Prop 7.5.1]{GT06}. Let $x$ be the generic point of the divisor $D$, then as $D$ is an irreducible divisor on $V$ we have $k(D)=\mathcal{O}_{D,x}=k(x)$. We denote by bar the image of the map $\mathcal{O}_{X,x}\rightarrow k(x)$. Let $(f,g)\in \Br k(V)$ then 
\[
\partial_{D}\left( (f,g) \right) = (-1)^{\valx (f)\valx (g)}\left(\overline{\frac{f^{\valx (g)}}{g^{\valx (f)}}}\right) \in k(D)^{*}/(k(D)^{*})^{2}
\] If $\partial_{D}\left( (f,g) \right)=0$ then we say $(f,g)$ is \emph{unramified} at $D$.
\end{remark}
\begin{definition}
For each $i\in J$, let $\mathcal{A}_{i}\in \Br k(U)$ be 
\[
\mathcal{A}_{i}:=\begin{cases}
( ap_{A}(-\frac{d_{i}}{c_{i}}),p_{i}(t)) \ \ &\text{if}\ i\not\in A,\\
( bp_{B}(-\frac{d_{i}}{c_{i}}),p_{i}(t)) \ \ &\text{if}\ i\in A.\\
\end{cases}
\]
\end{definition}
\begin{lemma}\label{lemma: gen of vertical Brauer group} The class $\mathcal{A}_{i}$ lies in $\Br^{\text{Vert}}(U,\pi)$.
\end{lemma}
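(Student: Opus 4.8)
The plan is to verify the two defining conditions of $\Br^{\text{vert}}(U,\pi)$ separately: first that $\mathcal{A}_i$ comes from $\Br k(\mathbb{A}^1)=\Br k(t)$ via $\pi^*$, and second that $\mathcal{A}_i$ is unramified on $U$, i.e. lies in $\Br U\subseteq \Br k(U)$. The first point is immediate from the very shape of the class: writing $\alpha_i := ap_A(-\tfrac{d_i}{c_i})$ (or $bp_B(-\tfrac{d_i}{c_i})$ in the case $i\in A$), this is a nonzero constant in $k^*$, so $\mathcal{A}_i=(\alpha_i,p_i(t))$ is the image under $\pi^*$ of the quaternion algebra $(\alpha_i,p_i(t))\in\Br k(t)$, which is well-defined since $p_i(t)\neq 0$ in $k(t)$. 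Hence $\mathcal{A}_i\in \pi^*\Br k(\mathbb{A}^1)$, and the whole content is to show $\mathcal{A}_i\in\Br U$.

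For the unramified claim I would use Proposition \ref{prop: residue maps}: it suffices to check that the residue $\partial_D(\mathcal{A}_i)$ vanishes for every irreducible divisor $D$ on $U$. Since $\alpha_i$ is a constant, the tame-symbol formula shows $\partial_D(\mathcal{A}_i)$ can only be nonzero at divisors $D$ where $\valx(p_i(t))\neq 0$, i.e. along the fibre $\pi^{-1}(-\tfrac{d_i}{c_i})$, where $p_i(t)$ vanishes (the other polynomials $p_j(t)$, $j\neq i$, are units there because $\Delta_{i,j}\neq 0$ and the $c_i,d_i$ are coprime — this is exactly the nonvanishing condition in Theorem \ref{thm: main thm}). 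So I would decompose the scheme-theoretic fibre $U_{-d_i/c_i}$ into its irreducible components and compute the residue along each. On that fibre the defining equation degenerates: writing $t_i=-d_i/c_i$, one of the two terms, say $ap_A(t_i)x^2$, either survives (if $i\notin A$) or the roles of $A$ and $B$ swap (if $i\in A$); in the surviving case the fibre equation becomes $\alpha_i x^2 + (\text{something vanishing})\, y^2 = 1$ up to the order of vanishing of $p_i$, and the key observation is that along each component $D$ of this fibre the function $\alpha_i$ becomes a \emph{square} in $k(D)^*$ — indeed on $\{x\neq 0\}$ one has $\alpha_i\equiv (1/x)^2$ modulo the vanishing term, so $\overline{\alpha_i}$ is a square in $k(D)^*/(k(D)^*)^2$. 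By the tame-symbol formula $\partial_D((\alpha_i,p_i(t)))=(-1)^{\valx(\alpha_i)\valx(p_i)}\,\overline{\alpha_i^{\valx(p_i)}/p_i(t)^{\valx(\alpha_i)}}$, and since $\valx(\alpha_i)=0$ (it is a nonzero constant) this reduces to $\overline{\alpha_i^{\valx(p_i)}}$, which is trivial in $k(D)^*/(k(D)^*)^2$ because $\overline{\alpha_i}$ is a square there. Hence $\partial_D(\mathcal{A}_i)=0$ for all $D$, so $\mathcal{A}_i\in\Br U$.

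Combining the two points, $\mathcal{A}_i\in\Br U\cap\pi^*\Br k(\mathbb{A}^1)=\Br^{\text{vert}}(U,\pi)$, as claimed. The one place requiring genuine care — and the step I expect to be the main obstacle — is the analysis of the special fibre $U_{t_i}$: one must correctly identify its irreducible components (they may fail to be geometrically irreducible, or may split, depending on whether $-(\text{vanishing coefficient})\cdot(\ldots)$ is a square in the relevant residue field), handle the order of vanishing of $p_i$ at $t_i$, and verify on each component that $\overline{\alpha_i}$ is a square so that the residue is killed. The cases $i\in A$ versus $i\notin A$ are symmetric, differing only by interchanging the two summands, so it is enough to treat one of them in detail.
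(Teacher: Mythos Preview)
Your proposal is correct and follows essentially the same approach as the paper's proof: both argue that, since the first slot of $\mathcal{A}_i$ is a nonzero constant, the only possible ramification is along the vanishing locus $Z_i$ of $p_i$, and there the defining equation of $U$ forces $\alpha_i$ to become a square (via $\alpha_i x^2=1$), so the residue dies. The paper is terser---it does not spell out the tame-symbol computation or the ``pulled back from $\Br k(t)$'' part as explicitly as you do---but the logic is identical.
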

\begin{proof}
As the left entry of $\mathcal{A}_{i}$ is constant, for any prime divisor $D\subseteq U$ with generic point $\eta_{D}$, we have 
\begin{equation}\label{eq: residue away from divisor inside zero locus}
\text{val}_{\eta_{D}}\left(ap_{A}\left(-\frac{d_{i}}{c_{i}}\right)\right)=\text{val}_{\eta_{D}}\left(bp_{B}\left(-\frac{d_{i}}{c_{i}}\right)\right)=0.
\end{equation} Let $Z_{i}\subseteq U$ be the vanishing locus of $p_{i}$, by (\ref{eq: residue away from divisor inside zero locus}) $\mathcal{A}_{i}$ is unramified outside of $Z_{i}$. Suppose $i\not\in A$, for $D\subseteq Z_{i}$ we have \[
\partial(\mathcal{A}_{i})_{\eta_{D}}=\frac{1}{ap_{A}(\frac{-d_{i}}{c_{i}})^{\text{val}_{\eta_{D}}(p_{i}(t))}}.\] As $bp_{B}(t)\mid_{{Z}_{i}}=0
$ then $ap_{A}(t)|_{Z_{i}}$ is a square in $k[Z_{i}]$. Moreover, as $ap_{A}(t)|_{Z_{i}}=ap_{A}(-\frac{d_{i}}{c_{i}})$, we have $\mathcal{A}_{i}$ is unramified along $Z_{i}$. The case for $i\in A$ is similar. It is then clear that $\mathcal{A}_{i}\in \Br^{\text{Vert}}(U,\pi)$.
\end{proof}
 \begin{remark} Let $m_{i}:=\{t_{i}\in \mathbb{A}^{1}:p_{i}(t_{i})=0\}$. Assume $i\not\in A$, then the fibre of $\pi$ above $m_{i}$ is \[
ap_{A}(t_{i})x^2=1.
 \] Suppose $ap_{A}(t_{i})$ is not a square over $k(m)$, then one can define a morphism \[
 \mathbb{A}^{1}_{L}\rightarrow  U_{m_{i}},\ y\mapsto (1/\alpha,y)
 \] where $k(\alpha)=k[w]/(w^2-ap_{A}(t_{i}))$, in this case the fibre $U_{m_{i}}$ is spit after a quadratic extension. If $ap_{A}(t_{i})$) is a square one can take $k(\alpha)=k$ and we have two copies of $\mathbb{A}^{1}$, namely $(1/\alpha,y)$ and $(-1/\alpha,y)$, each are geometrically irreducible i.e. the fibre in this case is split. 
 \end{remark}
 For the rest of Section \ref{sec: vertical Brauer group}, let $m\in \mathbb{A}^{1}$ be a closed point, $U_{m}$ the fibre of $\pi:U\rightarrow \mathbb{A}^{1}$ above $m$ and $\mathcal{A}\in \Br k(t)$ such that $\pi^{*}\mathcal{A}\in \Br^{\text{vert}}(U,\pi)$.
\begin{lemma}\label{lemma: vertical Brauer group, smooth fibre}
Suppose the fibre $U_{m}$ is smooth, then $\partial_{m}(\mathcal{A})=0$.
 \end{lemma}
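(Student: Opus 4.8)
The plan is to show that if the fibre $U_m$ over a closed point $m\in\mathbb{A}^1$ is smooth, then $\mathcal{A}$ has trivial residue at $m$; the argument exploits the fact that $\pi^*\mathcal{A}$ extends to an element of $\Br(U)$. First I would recall from Proposition \ref{prop: residue maps} that $\Br(U)$ is exactly the subgroup of $\Br k(U)$ killed by all residue maps $\partial_D$ as $D$ ranges over irreducible divisors of $U$; since $\pi^*\mathcal{A}\in\Br(U)$, we have $\partial_D(\pi^*\mathcal{A})=0$ for every such $D$. In particular, take $D=U_m$, viewed as a divisor on $U$: because $U_m$ is smooth (hence reduced and integral after possibly passing to a geometric component, but in fact for the residue computation it is the generic point of $U_m$ that matters), the pullback $\pi^*\mathcal{A}$ restricted to the discrete valuation ring $\mathcal{O}_{U,\eta_{U_m}}$ has residue $\partial_{U_m}(\pi^*\mathcal{A})\in\HH^1_{\text{\'et}}(k(U_m),\mathbb{Q}/\mathbb{Z})$.

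The key step is a functoriality/compatibility statement for residues under $\pi$: the residue $\partial_m(\mathcal{A})\in\HH^1_{\text{\'et}}(k(m),\mathbb{Q}/\mathbb{Z})$ of $\mathcal{A}$ along the closed point $m$ of $\mathbb{A}^1$, when pulled back along the natural map $k(m)\hookrightarrow k(U_m)$, equals (up to the ramification index of the valuation $\eta_{U_m}$ over the valuation at $m$, which is $1$ precisely because $U_m$ is smooth, i.e.\ $p_i(t)$, or rather the local parameter at $m$, remains a uniformizer at $\eta_{U_m}$) the residue $\partial_{U_m}(\pi^*\mathcal{A})$. This is the standard compatibility of residue maps in a Cartesian square of discrete valuation rings; I would cite it from Colliot-Thélène–Skorobogatov \cite{CS21} (or Grothendieck–Tate, as already used in the preceding remark). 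Concretely, if $\mathcal{A}=(f,g)$ with $f\in k$ constant and $g\in k(t)$, one checks directly via the tame-symbol formula that $\partial_m(\mathcal{A})$ is represented by an element of $k(m)^*/(k(m)^*)^2$ whose image in $k(U_m)^*/(k(U_m)^*)^2$ is $\partial_{U_m}(\pi^*\mathcal{A})$; smoothness of the fibre is what guarantees $\mathcal{O}_{\mathbb{A}^1,m}\to\mathcal{O}_{U,\eta_{U_m}}$ is unramified so no multiplicity factor intervenes.

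Combining the two steps: $\partial_{U_m}(\pi^*\mathcal{A})=0$ since $\pi^*\mathcal{A}\in\Br(U)$, and the compatibility identifies this with the image of $\partial_m(\mathcal{A})$ under the injection $k(m)^*/(k(m)^*)^2\hookrightarrow k(U_m)^*/(k(U_m)^*)^2$ (injective because $U_m$ is geometrically integral over $k(m)$, being a smooth affine conic-type fibre with a rational point, or at worst one argues the field extension $k(m)\subseteq k(U_m)$ is purely transcendental so induces an injection on square classes). Injectivity then forces $\partial_m(\mathcal{A})=0$.

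I expect the main obstacle to be pinning down the residue-compatibility statement in exactly the form needed — specifically verifying that the ramification index is $1$ (which is where smoothness of $U_m$ enters decisively: a non-smooth fibre could have the fibral divisor appear with multiplicity $>1$ or could be non-reduced, breaking the clean comparison) and confirming that $k(m)\to k(U_m)$ induces an injection on the relevant cohomology groups $\HH^1_{\text{\'et}}(-,\mathbb{Q}/\mathbb{Z})$. Both are routine but must be stated carefully; the computation itself, once the framework is set up, is immediate from the tame-symbol formula recalled in the preceding remark.
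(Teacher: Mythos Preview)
Your proposal is correct and follows essentially the same route as the paper: use functoriality of residues under $\pi$ to identify $\partial_m(\mathcal{A})$ (pushed into $k(U_m)^*/(k(U_m)^*)^2$) with $\partial_{U_m}(\pi^*\mathcal{A})=0$, then invoke geometric integrality of the smooth fibre to conclude that $k(m)$ is algebraically closed in $k(U_m)$, forcing $\partial_m(\mathcal{A})$ to be trivial. One small caveat: your parenthetical justifications for injectivity (``with a rational point'', ``purely transcendental'') need not hold for a general smooth conic over $k(m)$, but your primary reason---geometric integrality---is exactly what the paper uses and is all that is required.
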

 \begin{proof}
As the fibre is smooth, hence split we have a commutative diagram \[
\begin{tikzcd}
0 \arrow[r] & \Br k \arrow[r] \arrow[d] & \Br k(t) \arrow[r,"\partial_{m}"] \arrow[d] & \HH^{1}(k(m),\mathbb{Z}/2\mathbb{Z}) \arrow[r,"\sim"] \arrow[d] & k(m)^{*}/(k(m)^{*})^2 \arrow[d]\\
0 \arrow[r] & \Br U \arrow[r]           & \Br k(U) \arrow[r,"\partial_{U_{m}}"]           & \HH^{1}(k(U_{m}),\mathbb{Z}/2\mathbb{Z}) \arrow[r,"\sim"]           & k(U_{m})^{*}/(k(U_{m})^{*})^2.
\end{tikzcd}
\] Then the image of $\partial_{U_{m}}(\pi^{*}\mathcal{A})$ is equal to the image of $\partial_{m}\mathcal{A}$ under the map $k(m)^{*}/(k(m)^{*})^2\rightarrow k(U_{m})^{*}/(k(U_{m})^{*})^2$. By assumption $\partial_{U_{m}}(\pi^{*}\mathcal{A})=0$, hence $\partial_{m}\mathcal{A}$ is a square in $k(U_{m})^{*}/(k(U_{m})^{*})^2$. As $U_{m}$ is smooth affine conic it is geometrically integral, so the algebraic closure of $k(m)$ in $k(U_{m})$ is $k(m)$, hence $\partial_{m}\mathcal{A}$ is a square.
 \end{proof}
  \begin{lemma}
Suppose the fibre $U_{m}$ is a split singular fibre, then $\partial_{m}(\mathcal{A})=0$.
 \end{lemma}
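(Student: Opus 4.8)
The plan is to imitate the proof of Lemma~\ref{lemma: vertical Brauer group, smooth fibre}, replacing the now reducible fibre $U_{m}$ by a single geometrically irreducible component of multiplicity one -- which exists precisely because $U_{m}$ is assumed to be split.

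It is instructive first to describe these fibres explicitly. The fibre $U_{m}$ is singular only over a closed point $m$ at which one of $p_{A},p_{B}$ vanishes, and since $\Delta_{i,j}\neq 0$ the $p_{j}$ have pairwise distinct roots, so exactly one $p_{i}$ vanishes at such an $m$. Assume $i\in A$; the case $i\in B$ is identical after exchanging $x\leftrightarrow y$ and $A\leftrightarrow B$. Then $U_{m}\subseteq\mathbb{A}^{2}_{k(m)}$ (coordinates $x,y$) is cut out by $bp_{B}(t_{m})y^{2}=1$, with $bp_{B}(t_{m})\neq 0$ because $p_{j}(t_{m})\neq 0$ for every $j\in B$; for $U_{m}$ to be split this constant must be a square, say $bp_{B}(t_{m})=e^{2}$ with $e\in k(m)^{*}$, and then $U_{m}=Y_{+}\sqcup Y_{-}$ with $Y_{\pm}:=\{y=\pm 1/e\}\cong\mathbb{A}^{1}_{k(m)}$, each reduced and geometrically irreducible.

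Fix $Y:=Y_{+}$ and let $\eta\in U$ be its generic point. As $U$ is irreducible of dimension two, $\eta$ is a codimension-one point and $\mathcal{O}_{U,\eta}$ is a one-dimensional Noetherian local domain; moreover $Y$ occurs with multiplicity one in $U_{m}$, so $\mathcal{O}_{U_{m},\eta}=\mathcal{O}_{U,\eta}/(t-t_{m})$ is reduced, hence a field. Therefore $(t-t_{m})$ is the maximal ideal of $\mathcal{O}_{U,\eta}$, this ring is a discrete valuation ring (in particular $U$ is regular at $\eta$), and $\pi$ induces a local homomorphism $\mathcal{O}_{\mathbb{A}^{1},m}\to\mathcal{O}_{U,\eta}$ of ramification index one. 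Restricting $\pi^{*}\mathcal{A}\in\Br(U)$ to the regular locus of $U$ and combining Proposition~\ref{prop: residue maps} with the functoriality of residue maps along this local homomorphism yields a commutative diagram
\[
\begin{tikzcd}
\Br k(t) \arrow[r,"\partial_{m}"] \arrow[d,"\pi^{*}"'] & \HH^{1}(k(m),\mathbb{Q}/\mathbb{Z}) \arrow[d]\\
\Br k(U) \arrow[r,"\partial_{\eta}"'] & \HH^{1}(k(Y),\mathbb{Q}/\mathbb{Z})
\end{tikzcd}
\]
whose right vertical map is restriction along the inclusion $k(m)\hookrightarrow k(Y)$.

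To finish: $\pi^{*}\mathcal{A}\in\Br(U)$ is unramified, so $\partial_{\eta}(\pi^{*}\mathcal{A})=0$, hence $\partial_{m}(\mathcal{A})$ restricts to $0$ in $\HH^{1}(k(Y),\mathbb{Q}/\mathbb{Z})$. Since $Y\cong\mathbb{A}^{1}_{k(m)}$ is geometrically integral, $k(m)$ is algebraically closed in $k(Y)$, the natural map $G_{k(Y)}\to G_{k(m)}$ is surjective, and the restriction $\HH^{1}(k(m),\mathbb{Q}/\mathbb{Z})\to\HH^{1}(k(Y),\mathbb{Q}/\mathbb{Z})$ is therefore injective; thus $\partial_{m}(\mathcal{A})=0$. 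I expect the only point needing genuine care to be the functoriality of the residue map along $\pi$ at $\eta$ -- that is, checking that $Y$ has multiplicity one in $U_{m}$, so that $\mathcal{O}_{U,\eta}$ is a discrete valuation ring in which $t-t_{m}$ is a uniformiser and the square above commutes with the plain restriction map with no multiplicity factor intervening; the remainder is routine, and it does no harm that we exploit only one of the two components, which is all the hypothesis furnishes and all the conclusion requires.
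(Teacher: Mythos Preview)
Your proof is correct and follows exactly the approach the paper takes: the paper's proof simply says to take a multiplicity-one geometrically irreducible component $Y_{m}$ and repeat the argument of Lemma~\ref{lemma: vertical Brauer group, smooth fibre} with $Y_{m}$ in place of $U_{m}$. You have supplied considerably more detail (the explicit description of the components, the verification that $\mathcal{O}_{U,\eta}$ is a DVR with uniformiser $t-t_{m}$, and the injectivity of restriction via algebraic closedness of $k(m)$ in $k(Y)$), but the underlying strategy is identical.
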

 \begin{proof}
 If $U_{m}$ is split over $k(m)$ then take a component of multiplicity $1$, say $Y_{m}$. Applying the same argument as Lemma \ref{lemma: vertical Brauer group, smooth fibre} but with $Y_{m}$ instead of $U_{m}$ we get the statement.
 \end{proof}
   \begin{lemma}
Suppose $U_{m}$ is a singular fibre that is not split, then $\partial_{m}(\mathcal{A})=\partial_{m}(\mathcal{A}_{i})$ for some $i\in J$ or $\partial_{m}(\mathcal{A})=0$.
 \end{lemma}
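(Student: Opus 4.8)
The plan is to analyze the non-split singular fibres directly via the explicit residue formula. A fibre $U_m$ is singular precisely when $m$ lies in the vanishing locus of $\prod_{i\in J} p_i(t)$ or when one of $ap_A(t), bp_B(t)$ vanishes there; but by the coprimality/nonvanishing hypotheses $\Delta_{i,j}\neq 0$, the relevant closed points are (up to finitely many exceptions lying in $S_0$-type bad reduction) the points $m_i$ cut out by a single $p_i$. So first I would reduce to the case $m = m_i$ for some $i\in J$: if $m$ is not of this form, then at $m$ neither $p_i$ vanishes for any $i$, and one of $ap_A(t)$, $bp_B(t)$ is a unit in the local ring at $m$, which forces $U_m$ to contain a geometrically integral component of multiplicity one (a copy of $\mathbb{A}^1$), i.e. $U_m$ is split — contradicting the hypothesis. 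Thus the only genuinely non-split singular fibres sit over the $m_i$.

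Next, fix $i\in J$ with $m=m_i$, and assume $i\notin A$ (the case $i\in A$ is symmetric, swapping the roles of $ap_A$ and $bp_B$). The fibre is $ap_A(t_i)x^2 = 1$ together with $bp_B(t_i)\,y^2=0$; since $p_i$ divides neither $ap_A$ nor $p_B$ evaluated away from $-d_i/c_i$, the fibre $U_{m_i}$ is non-split exactly when $ap_A(-d_i/c_i)$ is a non-square in $k(m_i)$, and then $k(m_i)(\sqrt{ap_A(-d_i/c_i)})$ is its splitting field. Now write $\mathcal{A}\in \Br k(t)$; since $\pi^*\mathcal{A}\in \Br^{\mathrm{vert}}(U,\pi)$, applying Proposition \ref{prop: residue maps} on $U$ together with the commutative diagram of residues (as in Lemma \ref{lemma: vertical Brauer group, smooth fibre}, but now using a component $Y_{m_i}$ of $U_{m_i}$) shows that the image of $\partial_{m_i}(\mathcal{A})$ in $k(U_{m_i})^*/(k(U_{m_i})^*)^2$ — equivalently in $k(Y_{m_i})^*/(k(Y_{m_i})^*)^2$ — is trivial. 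The key point is that $k(Y_{m_i})$ is precisely the quadratic extension $k(m_i)(\sqrt{ap_A(-d_i/c_i)})$, so the kernel of $k(m_i)^*/(k(m_i)^*)^2 \to k(Y_{m_i})^*/(k(Y_{m_i})^*)^2$ is exactly $\langle [ap_A(-d_i/c_i)]\rangle$. Hence $\partial_{m_i}(\mathcal{A})\in\{1,\,[ap_A(-d_i/c_i)]\}$ inside $k(m_i)^*/(k(m_i)^*)^2$.

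Finally I would compare this with the residue of $\mathcal{A}_i = (ap_A(-d_i/c_i),\,p_i(t))$. By the tame-symbol formula, $\partial_{m_i}(\mathcal{A}_i)$ is the class of $ap_A(-d_i/c_i)$ in $k(m_i)^*/(k(m_i)^*)^2$ (up to the evident sign, which is a square since we are modulo squares and $\mathrm{val}_{m_i}(ap_A(-d_i/c_i))=0$), while $\partial_{m}(\mathcal{A}_i)=0$ for $m\neq m_i$ by Lemma \ref{lemma: gen of vertical Brauer group}. Therefore $\partial_{m_i}(\mathcal{A})$ equals either $0$ or $\partial_{m_i}(\mathcal{A}_i)$, which is the assertion. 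The main obstacle I anticipate is the careful bookkeeping needed to identify the residue field $k(Y_{m_i})$ of a chosen multiplicity-one component with the correct quadratic extension in all subcases (in particular handling the degenerate situation where $ap_A(-d_i/c_i)$ or $bp_B(-d_i/c_i)$ vanishes or is a square, and checking that those cases fall under the already-treated split or smooth cases), together with pinning down the sign ambiguity in the tame symbol so that the comparison with $\mathcal{A}_i$ is exact and not merely up to squares in a way that could a priori enlarge the set of possible residues.
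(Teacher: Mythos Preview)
Your proposal is correct and follows essentially the same approach as the paper: identify the non-split singular fibres as those over the $m_i$, observe that the function field of (a component of) $U_{m_i}$ is the rational function field over the quadratic extension $k(m_i)(\sqrt{\alpha_i})$ with $\alpha_i = ap_A(-d_i/c_i)$ (or $bp_B(-d_i/c_i)$), and conclude that $\partial_{m_i}(\mathcal{A})$ lies in the kernel $\{1,[\alpha_i]\}$, which is exactly $\{0,\partial_{m_i}(\mathcal{A}_i)\}$. One small imprecision: $k(Y_{m_i})$ is not literally the quadratic extension but rather $k(m_i)(\sqrt{\alpha_i})(y)$; this does not affect the argument, since the kernel of $k(m_i)^*/(k(m_i)^*)^2 \to k(m_i)(\sqrt{\alpha_i})(y)^*/(k(m_i)(\sqrt{\alpha_i})(y)^*)^2$ is still $\langle[\alpha_i]\rangle$.
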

 \begin{proof}
The singular fibres of $\pi:U\rightarrow \mathbb{A}^{1}$ are exatcly \[
\mathcal{M}:=\{m_{i}\in \mathbb{A}^{1}(k):p_{i}(m_{i})=0\}.
\] Then $\partial_{m_{i}}\mathcal{A}_{i}=\alpha_{i}\in k^{*}/(k^{*})^2$, where $\alpha_{i}$ is such that the fibre $U_{m}$ splits over $k(\sqrt{\alpha_{i}})$. Moreover, it is clear that $k(U_{m})=k(t)(\sqrt{\alpha_{i}})$ where $t$ is a purely transcendental element. Then $\partial_{U_{m}}\pi^{*}\mathcal{A}=0$ if and only if $\partial_{m}\mathcal{A}=0$ or $\partial_{m}\mathcal{A}=\alpha_{i}$ in $k^{*}/(k^{*})^2$
 \end{proof}
 \begin{proposition}\label{prop: vertical Brauer group}
We have that $\Br^{\text{Vert}}(U,\pi)/\Br k$ is generated by the $\mathcal{A}_{i}$ i.e. if $\mathcal{A}\in \Br^{\text{Vert}}(U,\pi)$ then $\mathcal{A}=\sum\limits_{i\in J} \epsilon_{i}\mathcal{A}_{i} + \mathcal{B}$ where $\epsilon_{i}\in \mathbb{Z}/2\mathbb{Z}$ and $\mathcal{B}\in \Br k$.
 \end{proposition}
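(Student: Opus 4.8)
The plan is to combine the residue computations just made with the residue exact sequence on the base $\mathbb{A}^{1}$: since all the delicate fibrewise splitting analysis is already contained in Lemma \ref{lemma: vertical Brauer group, smooth fibre} and the two lemmas following it, what remains is essentially bookkeeping. First I would write $\mathcal{A}\in\Br^{\mathrm{Vert}}(U,\pi)$ as $\mathcal{A}=\pi^{*}\mathcal{A}_{0}$ for some $\mathcal{A}_{0}\in\Br k(t)$, which is possible by the definition of the vertical Brauer group. Applying Proposition \ref{prop: residue maps} to $V=\mathbb{A}^{1}_{k}$ together with $\Br\mathbb{A}^{1}_{k}=\Br k$ (valid since $\charr k=0$) yields the exact sequence
\[
0\longrightarrow\Br k\longrightarrow\Br k(t)\xrightarrow{\;(\partial_{m})_{m}\;}\bigoplus_{m}\HH^{1}_{\et}\big(k(m),\mathbb{Q}/\mathbb{Z}\big),
\]
the sum ranging over closed points $m\in\mathbb{A}^{1}$. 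So it is enough to find $\epsilon_{i}\in\mathbb{Z}/2\mathbb{Z}$ ($i\in J$) for which $\mathcal{A}_{0}-\sum_{i\in J}\epsilon_{i}\mathcal{A}_{i}$ has vanishing residue at every closed point; here I write $\mathcal{A}_{i}$ also for the class $\big(ap_{A}(-\tfrac{d_{i}}{c_{i}}),p_{i}(t)\big)$, resp.\ $\big(bp_{B}(-\tfrac{d_{i}}{c_{i}}),p_{i}(t)\big)$, of $\Br k(t)$, whose pullback along $\pi$ is the class $\mathcal{A}_{i}$ of the statement (and which is vertical by Lemma \ref{lemma: gen of vertical Brauer group}). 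Once that difference lies in $\Br k$, applying $\pi^{*}$ gives $\mathcal{A}=\sum_{i}\epsilon_{i}\mathcal{A}_{i}+\mathcal{B}$ with $\mathcal{B}\in\Br k$, which is the claim.

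To pin down the $\epsilon_{i}$ I would record the residues. Put $m_{i}=\{t=-d_{i}/c_{i}\}$; by the last of the three lemmas these are exactly the singular fibres of $\pi$, and the hypothesis $\Delta_{i,j}\neq0$ forces $p_{i}$ and $p_{j}$ to have no common root, so the $m_{i}$ are pairwise distinct and $p_{i}$ is a unit at $m_{j}$ for $i\neq j$. By the tame-symbol description of the residue maps recalled after Proposition \ref{prop: residue maps}, $\partial_{m}\mathcal{A}_{i}=0$ for every closed point $m\neq m_{i}$ (the left slot is a constant and $p_{i}$ a unit there), while $\partial_{m_{i}}\mathcal{A}_{i}=\alpha_{i}\in k^{*}/(k^{*})^{2}$, the class with $U_{m_{i}}$ split over $k(\sqrt{\alpha_{i}})$. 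On the other hand, Lemma \ref{lemma: vertical Brauer group, smooth fibre} and the split-singular-fibre lemma give $\partial_{m}\mathcal{A}_{0}=0$ whenever $U_{m}$ is smooth or a split singular fibre, and the non-split-singular-fibre lemma gives $\partial_{m_{i}}\mathcal{A}_{0}\in\{0,\alpha_{i}\}$ at the remaining $m_{i}$. Choosing $\epsilon_{i}=0$ when $U_{m_{i}}$ is split and otherwise $\epsilon_{i}\in\{0,1\}$ with $\partial_{m_{i}}\mathcal{A}_{0}=\epsilon_{i}\alpha_{i}$, we obtain $\partial_{m}\big(\mathcal{A}_{0}-\sum_{i}\epsilon_{i}\mathcal{A}_{i}\big)=0$ at every closed point, completing the argument.

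I do not expect a genuine obstacle here, the substance being entirely in the preceding lemmas; the main points needing care are that the residue divisor of each auxiliary class $\mathcal{A}_{i}$ is supported precisely at $m_{i}$ --- which is exactly where the hypothesis $\Delta_{i,j}\neq0$ enters, via the fact that no two of the $p_{i}$ share a root --- and that the residue accounting must be carried out on the base $\mathbb{A}^{1}$ rather than on $U$, the translation between the two having already been built into the three lemmas through the commutative diagram relating residues on base and total space.
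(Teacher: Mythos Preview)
Your proposal is correct and follows essentially the same approach as the paper: write the vertical class as $\pi^{*}$ of some $\mathcal{A}_{0}\in\Br k(t)$, use the three preceding lemmas to see that $\partial_{m}\mathcal{A}_{0}$ vanishes except possibly at the $m_{i}$ where it equals $0$ or $\alpha_{i}$, subtract the appropriate $\epsilon_{i}\mathcal{A}_{i}$ to kill all residues, and conclude via $\Br\mathbb{A}^{1}=\Br k$. Your write-up is in fact slightly more explicit than the paper's in justifying why $\partial_{m_{j}}\mathcal{A}_{i}=0$ for $j\neq i$ (through the hypothesis $\Delta_{i,j}\neq0$), but the argument is the same.
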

 \begin{proof}
Let $\mathcal{A}\in \Br^{\text{Vert}}(U,\pi)$ which is the pull back of an element $\mathcal{A}'\in \Br k(t)$. Then $\partial_{m}(\mathcal{A}')=0$ for all but finitely many closed points $m\in \mathbb{A}^{1}$, hence $\mathcal{A}'\in \Br(W)$ where $W$ is a dense open of $\mathbb{A}^{1}$. It then follows that $\mathcal{A}\in \Br(\pi^{-1}W)$ i.e. non-trivial residues of $\mathcal{A}$ can only appear from vertical divisors $D\subset U$, by definition these correspond to the fibres of $\pi:U\rightarrow \mathbb{A}^{1}$. Let 
\[
E:=\{i:\mathcal{A}' \text{ has non-trivial residue at the closed point } m_{i}\in \mathbb{A}^{1}\}.
\] For each $i\in E$ we have that $\mathcal{A}'$ and $\mathcal{A}_{i}$ have the same residue at $m_{i}$, hence $\alpha:=\mathcal{A}'-\sum_{i\in E}\mathcal{A}_{i}\in \Br \mathbb{A}^{1}=\Br k$ and $\pi^{*}(\mathcal{A}'-\sum_{i\in E}\mathcal{A}_{i})=\mathcal{A}-\sum_{i\in E}\mathcal{A}_{i}=0$ in $\Br^{\text{Vert}}U/\Br k$.
 \end{proof}
 \begin{notation}
Let $v$ be a place of $\mathbb{Q}$. Throughout the rest of this article, given a quaternion algebra $(a,b)\in \Br \mathbb{Q}_{v}$ we denote by $\langle a,b\rangle_{v}$ the image of the invariant map $v$ i.e. $\inv_{v}(a,b)=\langle a,b\rangle_{v}$.
 \end{notation}
 \section{Locally soluble fibres}
 Through out the rest of the paper $S_{0}$ will be the finite set of places in Theorem \ref{thm: main thm}.
 \begin{definition}
Let $S_{\text{bad}}\subseteq \Omega_{\mathbb{Q}}\backslash S_{0}$ be the set of places outside of $S_{0}$ for which \begin{enumerate}
    \item $v\mid \Delta_{i,j}$ for some $i\neq j$,
    \item $v\mid d$,
    \item $v=2$,
    \item $v$ such that $\valv(p_{J}(t_{v}))>0$ for every $t_{v}\in \mathbb{Z}_{v}$.
\end{enumerate}
\end{definition}
\begin{notation}
For $a\in \mathbb{Z}^{*}_{v}$ for some place $v$, we shall denote by $[a]_{v}$ for the class of $a$ in $\mathbb{Z}_{v}^{*}/(\mathbb{Z}_{v}^{*})^2$.
\end{notation}
\begin{lemma}\label{lemma: locally soluble fibre} Let $v\not\in S_{0}\cup S_{bad}$ be a place. Suppose $t_{v}\in \mathbb{Z}_{v}^{*}$ and $i\in J$ is an element such that $\valv(p_{i}(t_{v}))>0$. Denote by $\mathcal{U}_{t_{v}}$ of the fibre above $t_{v}$, then $\mathcal{U}_{t_{v}}(\mathbb{Z}_{v})\neq \emptyset$ if and only if \begin{enumerate}
\item $[ap_{A}(-\frac{d_{i}}{c_{i}})]_{v}=0 \ \text{if}\ i\not\in A$,
\item $[bp_{B}(-\frac{d_{i}}{c_{i}})]_{v}=0 \ \text{if}\ i\in A.$
\end{enumerate}
\end{lemma}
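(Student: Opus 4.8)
The plan is to put the equation of the fibre $\mathcal{U}_{t_v}$, namely $ap_A(t_v)x^2+bp_B(t_v)y^2=1$, into the normalised shape $ux^2+\varpi^{k}wy^{2}=1$ over $\mathbb{Z}_v$, where $\varpi$ is a uniformizer of $\mathbb{Z}_v$, $u,w\in\mathbb{Z}_v^{*}$ and $k:=\valv(p_i(t_v))\geq 1$; solubility over $\mathbb{Z}_v$ will then depend only on whether the unit coefficient $u$ is a square.

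First I would unpack $v\notin S_0\cup S_{\text{bad}}$: it gives that $v$ is odd, that $v\nmid d=ab$ (hence $v\nmid a$ and $v\nmid b$), and that $\valv(\Delta_{i',j'})\leq 0$ for all $i'\neq j'$. From $\valv(c_it_v+d_i)=\valv(p_i(t_v))>0$ and the coprimality of $c_i,d_i$ one gets $v\nmid c_i$, so $c_i\in\mathbb{Z}_v^{*}$, the element $-d_i/c_i$ is $v$-integral, and $t_v\equiv -d_i/c_i\pmod{\varpi^{k}}$. For $j\neq i$ this yields $p_j(t_v)\equiv p_j(-d_i/c_i)=(c_id_j-c_jd_i)/c_i\pmod{\varpi^{k}}$, an element of $\mathbb{Z}_v$ equal up to sign to $\Delta_{j,i}$; since $\valv(\Delta_{j,i})\leq 0$ it must be a unit, so $\valv(p_j(t_v))=0$ and likewise $p_j(-d_i/c_i)\in\mathbb{Z}_v^{*}$. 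Consequently $p_i$ is the unique factor among $\{p_j\}_{j\in J}$ taking a non-unit value at $t_v$ over $\mathbb{Z}_v$, with $\valv(p_i(t_v))=k$, and the classes $[ap_A(-d_i/c_i)]_v$ and $[bp_B(-d_i/c_i)]_v$ are well defined and equal, respectively, to $[ap_A(t_v)]_v$ and $[bp_B(t_v)]_v$.

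Next I would split according to whether $i\in A$, the two cases being interchanged by swapping $(ap_A,x)$ with $(bp_B,y)$; treat $i\notin A$. Then $p_i$ occurs in $p_B$, while every remaining factor of $p_A$ and $p_B$, together with $a$ and $b$, is a $v$-adic unit, so $u:=ap_A(t_v)\in\mathbb{Z}_v^{*}$ and $bp_B(t_v)=\varpi^{k}w$ with $w\in\mathbb{Z}_v^{*}$, and the fibre is $ux^2+\varpi^{k}wy^2=1$. If $u\in(\mathbb{Z}_v^{*})^2$, then $(u^{-1/2},0)\in\mathcal{U}_{t_v}(\mathbb{Z}_v)$. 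Conversely, if $(x,y)\in\mathbb{Z}_v^{2}$ solves the equation, reducing modulo $\varpi$ (using $k\geq 1$) gives $\bar{u}\,\bar{x}^{2}=1$ in $\mathbb{F}_v$, so $\bar{x}\neq 0$ and $\bar{u}$ is a square in $\mathbb{F}_v^{*}$; as $v$ is odd, Hensel's lemma lifts this to $u\in(\mathbb{Z}_v^{*})^2$. Hence $\mathcal{U}_{t_v}(\mathbb{Z}_v)\neq\emptyset$ if and only if $[ap_A(t_v)]_v=0$, if and only if $[ap_A(-d_i/c_i)]_v=0$, which is condition (1) (condition (2) being vacuous when $i\notin A$). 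The case $i\in A$ is the mirror image, with $bp_B$ and $y$ in the roles of $ap_A$ and $x$, and produces condition (2).

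I do not anticipate a genuine obstacle: the only point needing care is the $v$-adic bookkeeping showing that $p_i$ is the \emph{only} linear factor turning non-unit at $t_v$ — this is precisely where the definition of $S_{\text{bad}}$ (excluding the primes dividing the $\Delta_{i,j}$, the primes dividing $d$, and the prime $2$) is used — together with the elementary fact, via Hensel, that over $\mathbb{Z}_v$ with $v$ odd a unit is a square exactly when its reduction modulo $\varpi$ is.
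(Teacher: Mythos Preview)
Your proof is correct and follows the same strategy as the paper: reduce the equation of $\mathcal{U}_{t_v}$ modulo $\mathfrak{m}_v$, observe that (for $i\notin A$) the special fibre becomes $ap_A(-d_i/c_i)\,x^2=1$, and conclude via Hensel's lemma. The paper compresses this into two lines, whereas you spell out the $v$-adic bookkeeping (that $c_i\in\mathbb{Z}_v^{*}$, that $p_j(t_v)$ is a unit for $j\neq i$ via $v\nmid\Delta_{j,i}$, and that $[ap_A(t_v)]_v=[ap_A(-d_i/c_i)]_v$) and give both implications explicitly rather than invoking Hensel in its geometric form; the content is the same.
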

\begin{proof}
Note as $v\not\in S_{\text{bad}}$ we have that $\valv(p_{i}(t_{v}))>0$ for a unique $i\in J$. We shall assume that $i\not\in A$, then the special fibre of $\mathcal{U}_{t_{v}}\rightarrow \Spec \mathbb{Z}_{v}$ is defined by the equation
\[
\mathcal{U}_{t_{v}}: ap_{A}\left(-\frac{d_{i}}{c_{i}}\right)y^2=1\ (\text{mod}\ \mathfrak{m}_{v}).
\] By Hensel's Lemma $\mathcal{U}_{t_{v}}(\mathbb{Z}_{v})\neq \emptyset$ if and only if $\left[ap_{A}\left(-\frac{d_{i}}{c_{i}}\right)\right]_{v}=0$. The case of $i\in A$ is similar.
\end{proof}
\begin{lemma}\label{lemma: Brauer element pairing trivially at place} Let $v\not\in S_{0}\cup S_{bad}$ be a place. Then for $i\in J$ we have that \[
\inv_{v}\mathcal{A}_{i}(P_{v})=0
\] for all $P_{v}=(x_{v},y_{v},t_{v})\in \mathcal{U}(\mathbb{Z}_{v})$.
\end{lemma}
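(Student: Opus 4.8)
The plan is to compute the local invariant $\inv_v \mathcal{A}_i(P_v)$ directly from the definition of the Hilbert symbol, splitting into the two cases $v \mid p_J(t_v)$ and $v \nmid p_J(t_v)$. Recall $\mathcal{A}_i = (c, p_i(t))$ where $c = ap_A(-d_i/c_i)$ if $i \notin A$ and $c = bp_B(-d_i/c_i)$ if $i \in A$; in either case $c$ is a fixed nonzero $S_0$-integer which, since $v \notin S_0 \cup S_{\mathrm{bad}}$ and $v \nmid \Delta_{i,j}$ for all $j$, is a $v$-adic unit whose reduction we control. So $\inv_v \mathcal{A}_i(P_v) = \langle c, p_i(t_v)\rangle_v$, and since $c \in \mathbb{Z}_v^*$ this Hilbert symbol is tame: it equals $[c]_v^{\,\valv(p_i(t_v))}$ in $\mathbb{Z}/2\mathbb{Z}$ (using that $v \neq 2$, as $v \notin S_{\mathrm{bad}}$). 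Hence the invariant is automatically $0$ unless $\valv(p_i(t_v))$ is odd, and in particular unless $v \mid p_i(t_v)$.

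First I would dispose of the case $\valv(p_i(t_v)) = 0$: then the tame symbol formula gives $\langle c, p_i(t_v)\rangle_v = 0$ immediately, with nothing more to check. So assume $\valv(p_i(t_v)) > 0$. I would first observe that because $v \notin S_{\mathrm{bad}}$, at most one index $i' \in J$ can have $\valv(p_{i'}(t_v)) > 0$, so this $i$ is the unique such index; in particular $t_v$ reduces modulo $v$ to the root $-d_i/c_i$ of $p_i$, and $c_i$ is a $v$-adic unit so $t_v \in \mathbb{Z}_v$ is forced to be a unit (here one uses $v \nmid d_i$, which holds since $c_i, d_i$ are coprime and $v \nmid \Delta$ constrains things; alternatively if $t_v$ were non-integral the fibre equation has no $\mathbb{Z}_v$-point, contradicting $P_v \in \mathcal{U}(\mathbb{Z}_v)$). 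Now the key point: since $P_v = (x_v, y_v, t_v) \in \mathcal{U}(\mathbb{Z}_v)$, Lemma~\ref{lemma: locally soluble fibre} applies and tells us that $[ap_A(-d_i/c_i)]_v = 0$ when $i \notin A$, respectively $[bp_B(-d_i/c_i)]_v = 0$ when $i \in A$ — that is, $[c]_v = 0$. Therefore $\langle c, p_i(t_v)\rangle_v = [c]_v^{\,\valv(p_i(t_v))} = 0$ regardless of the parity of $\valv(p_i(t_v))$.

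Finally I would note that the residue/invariant contributions from the $\mathcal{A}_{i'}$ with $i' \neq i$ are irrelevant here since the statement is about a single fixed $\mathcal{A}_i$, and that the left entry $c$ of $\mathcal{A}_i$ being constant (independent of the point) is what makes the whole argument local and elementary. The only genuine input is the equivalence in Lemma~\ref{lemma: locally soluble fibre}, which converts solubility of the fibre at $v$ into the vanishing of $[c]_v$; everything else is the tame symbol computation plus the combinatorial fact that $v \notin S_{\mathrm{bad}}$ isolates a unique vanishing index. I do not expect any real obstacle: the one point requiring a little care is checking that $t_v$ is indeed a $v$-adic unit (so that "$\valv(p_i(t_v)) > 0$" genuinely means $t_v \bmod v$ is the reduction of $-d_i/c_i$ and Lemma~\ref{lemma: locally soluble fibre} is applicable), and this follows because $P_v$ is a $\mathbb{Z}_v$-point of $\mathcal{U}$ together with $v \notin S_{\mathrm{bad}}$.
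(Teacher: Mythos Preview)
Your proposal is correct and matches the paper's argument: both compute $\inv_v \mathcal{A}_i(P_v) = \langle aD_i^A, p_i(t_v)\rangle_v$, observe that $aD_i^A$ is a $v$-adic unit so the symbol vanishes unless $\valv(p_i(t_v))$ is odd and $aD_i^A$ is a non-square modulo $v$, and then invoke Lemma~\ref{lemma: locally soluble fibre} to rule out that possibility. One minor correction to the point you flagged as needing care: your claim that $t_v$ must be a $v$-adic unit is not quite right---nothing prevents $v \mid d_i$, in which case $t_v \equiv -d_i/c_i \equiv 0 \pmod{v}$---but this is harmless, since the hypothesis $t_v \in \mathbb{Z}_v^*$ in Lemma~\ref{lemma: locally soluble fibre} is not actually used in its proof (and the paper itself does not verify it when citing the lemma here).
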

\begin{proof}
As $\inv_{v}\mathcal{A}_{i}(P_{v}) = \langle aD^{A}_{i},p_{i}(t_{v}) \rangle_{v}$ and $aD_{i}^{A}$ is a $v$-adic unit, we see $\inv_{v}\mathcal{A}_{i}(P_{v})\neq 0$ if and only if $aD_{i}^{A}$ is a not a square and $\valv(p_{i}(t_{v}))\equiv 1$ mod $2$. By Lemma \ref{lemma: locally soluble fibre} this is impossible as $\mathcal{U}_{t_{v}}(\mathbb{Z}_{v})\neq \emptyset$.
\end{proof}
\section{Partial adelic points}
\begin{definition} Let $T\subset \Omega_{\mathbb{Q}}$ be a finite set of places containing $S_{0}$ then a \emph{partial adelic point over $T$} is a point $(P_{v})_{v\in T}=(x_{v},y_{v},t_{v})_{v\in T}$ in the product \[
(P_{v})_{v\in T}\in \prod\limits_{v\in S_{0}} \mathcal{U}(\mathcal{O}_{v})\times \prod\limits_{v\in T \backslash S_{0}}U(k_{v}).
\] Moreover, we denote by $P_{T}$ the partial adelic point $(P_{v})_{v\in T}$ over $T$.
\end{definition}
\begin{remark}\label{rem: construction}
If $[x][p_{J'}]\in G_{i}\backslash G_{D}$ i.e. there exists $i_{x}\neq i$ such that $cD^{J'}_{i_{x}}\not\in \{1,aD^{A}_{i_{x}}\}$, then there exists a place $v_{x}\not\in S_{0}\cup S_{\text{bad}}$ such that $cD^{J'}_{i_{x}}$ is not a square at $v_{x}$ but $aD^{A}_{i_{x}}$ is a square at $v_{x}$. For each $i$ and each $x\in G_{i}\backslash G_{D}$ choose a prime $v_{x}$ as above. Then one obtains a finite set of $v_{x}$'s which we will denote by $S_{D}$.
\end{remark}
\begin{definition}\label{definition: defn of S} Let $S:=S_{0}\cup S_{\text{bad}}\cup S_{D}$.
\end{definition}

\begin{definition}\label{defn: suitable partial adelic point} Let $T$ be a finite set of places containing $S$. A partial adelic point $P_{T}$ is called \emph{suitable} if the following hold:
\begin{enumerate}
\item $dp_{J}(t_{v})\neq 0$ for all $v\in T$,
\item $\valv(dp_{J}(t_{v}))\leq 1$ for all $v\in T \backslash S_{0}$,
\item $\valtwo(dp_{J}(t_{v}))=1$ if $2\in T \backslash S_{0}$,
\item There is one place $v\in S_{0}$ such that $-dp_{v}(t_{v})\in \mathbb{Q}_{v}^{*}$ is a square,
\item $\sum\limits_{v}\inv_{v}\mathcal{A}_{i}(P_{v})=0$ for every $i\in J$.
\item For every $v_{x}\in S_{D}$ the local point $P_{v}\in \mathcal{U}(\mathbb{Z}_{v})$ lies above $t_{x}\in \mathbb{A}^{1}(\mathbb{Z}_{v_{x}})$ as described in remark \ref{rem: construction}.
\end{enumerate}
\end{definition}
\begin{proposition}\label{prop: Can always find suitable partial adelic point}
Under the assumptions of Theorem \ref{thm: main thm} there always exists a suitable partial adelic point with respect to $T$ on $\mathcal{U}$.
\end{proposition}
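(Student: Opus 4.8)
The plan is to construct the partial adelic point directly over $T:=S$ (Definition \ref{definition: defn of S}), starting from the $S_0$-integral adelic point $(P_v)=(x_v,y_v,t_v)$ provided by the hypotheses of Theorem \ref{thm: main thm} and changing its local components only at the finitely many places of $S_D$, after a harmless perturbation at the places of $S_0$ (a larger $T$ is handled identically, keeping the given integral components at the extra places). Write $S=S_0\cup S_{\text{bad}}\cup S_D$. At each $v\in S_{\text{bad}}$ I would keep $P_v\in\mathcal{U}(\mathbb{Z}_v)$: since $v\notin S_0$, hypothesis (2) of Theorem \ref{thm: main thm} gives $\valv(dp_J(t_v))\le 1$, in particular $dp_J(t_v)\neq0$, and $\valtwo(dp_J(t_v))=1$ if $v=2$, so conditions (1)--(3) of Definition \ref{defn: suitable partial adelic point} hold at those places. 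At the places of $S_0$, conditions (2)--(3) are empty, and condition (4) is precisely hypothesis (3) of Theorem \ref{thm: main thm}, witnessed by some $v_0\in S_0$ with $-dp_J(t_{v_0})$ a nonzero square (so $dp_J(t_{v_0})\neq0$). For condition (1) at the remaining $v\in S_0$, if $dp_J(t_v)=0$ I would replace $P_v$ by a nearby $P_v'\in\mathcal{U}(\mathcal{O}_v)$ with $dp_J(t_v')\neq0$; such a point exists because $\mathcal{U}(\mathcal{O}_v)$ is open in $U(\mathbb{Q}_v)$, $U$ is a smooth surface, and the zero locus of $dp_J$ on $U$ is a finite union of fibres of $\pi$, hence a proper closed subset. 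Taking $P_v'$ close enough to $P_v$ moreover keeps $\inv_v\mathcal{A}_i(P_v')=\inv_v\mathcal{A}_i(P_v)$ for every $i\in J$, since each $\mathcal{A}_i$ lies in $\Br U$ (Lemma \ref{lemma: gen of vertical Brauer group}) and evaluation of a Brauer class is locally constant.

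I would then deal with the places $v_x\in S_D$. By Remark \ref{rem: construction} such a $v_x$ lies outside $S_0\cup S_{\text{bad}}$, comes with a distinguished index $i_x$, and is chosen so that $aD^{A}_{i_x}$ is a square at $v_x$; moreover one can take $t_x\in\mathbb{A}^1(\mathbb{Z}_{v_x})$ reducing mod $v_x$ to the root of $p_{i_x}$, so that $p_{i_x}(t_x)$ has $v_x$-adic valuation exactly $1$ while, since $v_x\nmid\Delta_{i,j}$ for $i\neq j$, every other $p_j(t_x)$ is a $v_x$-adic unit; as also $v_x\nmid d$ this forces $dp_J(t_x)$ to have $v_x$-adic valuation $1$. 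Unwinding the definition of $D^{A}_{i_x}$ (using $A^{c}=B$ and $d=ab$), the condition that $aD^{A}_{i_x}$ be a square at $v_x$ is exactly the solubility criterion of Lemma \ref{lemma: locally soluble fibre} for the fibre $\mathcal{U}_{t_x}$, so I may fix a point $P_{v_x}'\in\mathcal{U}(\mathbb{Z}_{v_x})$ over $t_x$. Then conditions (1)--(2) of Definition \ref{defn: suitable partial adelic point} hold at $v_x$, condition (3) is empty ($v_x\neq2$), and condition (6) holds by construction.

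It remains to verify condition (5): $\sum_v\inv_v\mathcal{A}_i(P_v)=0$ for every $i\in J$. Each $\mathcal{A}_i$ lies in $\Br^{\text{vert}}(U,\pi)$ by Lemma \ref{lemma: gen of vertical Brauer group}, so hypothesis (4) of Theorem \ref{thm: main thm} gives $\sum_v\inv_v\mathcal{A}_i(P_v)=0$ for the original adelic point; the sum is finite and effectively runs over $S$ because $\inv_v\mathcal{A}_i(P_v)=0$ whenever $v\notin S_0\cup S_{\text{bad}}$ and $P_v\in\mathcal{U}(\mathbb{Z}_v)$, by Lemma \ref{lemma: Brauer element pairing trivially at place}. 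The modifications above change the local component only at places of $S_0$ (where the $\inv_v\mathcal{A}_i$ are unchanged by the choice of $P_v'$) and at the places $v_x\in S_D$ (where the old and the new point both lie in $\mathcal{U}(\mathbb{Z}_{v_x})$ with $v_x\notin S_0\cup S_{\text{bad}}$, hence both contribute $0$ by Lemma \ref{lemma: Brauer element pairing trivially at place}). So the sum is unchanged and still vanishes, which gives condition (5) and completes the construction over $T=S$.

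The one step that is not purely formal is the compatibility at $S_D$: these auxiliary places are introduced in Remark \ref{rem: construction} exactly so that later one can kill unwanted classes of the Selmer and dual Selmer groups, and one must check that forcing the local point at $v_x$ to sit over the prescribed $t_x$ neither destroys local solubility of the fibre — which is why $v_x$ is chosen with $aD^{A}_{i_x}$ a square there, so that Lemma \ref{lemma: locally soluble fibre} applies — nor disturbs the vertical Brauer--Manin obstruction — which is why $v_x$ is chosen outside $S_0\cup S_{\text{bad}}$, where Lemma \ref{lemma: Brauer element pairing trivially at place} makes every $\inv_{v_x}\mathcal{A}_i$ vanish on integral points. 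Everything else is bookkeeping against the six conditions of Definition \ref{defn: suitable partial adelic point}.
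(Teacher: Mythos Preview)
Your argument is correct and follows essentially the same approach as the paper's proof: start from the adelic point supplied by Theorem \ref{thm: main thm}, observe that hypotheses (2)--(4) of the theorem directly give conditions (1)--(5) of Definition \ref{defn: suitable partial adelic point}, and then shift the local components at the places of $S_D$ using Lemma \ref{lemma: Brauer element pairing trivially at place} to secure condition (6) without disturbing (5). You are in fact more careful than the paper on two points it glosses over: the small perturbation at places of $S_0$ needed to ensure $dp_J(t_v)\neq 0$ there, and the explicit construction of $t_x$ together with the appeal to Lemma \ref{lemma: locally soluble fibre} to guarantee $\mathcal{U}_{t_x}(\mathbb{Z}_{v_x})\neq\emptyset$.
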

\begin{proof}
Under the assumptions of Theorem \ref{thm: main thm} we have an adelic point satisfying (1)-(4) of Definition \ref{defn: suitable partial adelic point}. As this adelic point is orthogonal to the vertical Brauer group by Lemma \ref{lemma: gen of vertical Brauer group} we have condition (5) is satisfied. For such $v_{x}$ the invariant map at $v_{x}$ pairs trivially with $\mathcal{U}(\mathbb{Z}_{v_{x}})$ by Lemma \ref{lemma: Brauer element pairing trivially at place} we can shift our adelic point without disrupting the properties (1)-(5) and satisfying $(6)$.
\end{proof}
\section{Admissible fibres}
\begin{definition}\label{defn: admissible points}
Given a finite set of places $T$ containing $S$, and a suitable partial adelic point $P_{T}\in \mathcal{U}$, we will say that a point $t_{0}\in \mathbb{Z}_{S_{0}}$ is \emph{$T$-admissible} with respect to $P_{T}$ if \begin{enumerate}
\item $[p_{i}(t_{0})]_{v}=[p_{i}(t_{v})]_{v}$ for all $v\in T$.
\item For each $i\in J$, we have that $p_{i}(t)$ is a unit outside of $T$ except at one place $u_{i}$, where $p_{i}(t_{0})$ is a uniformiser (i.e. $\valu(p_{i}(t_{0}))=1$).
\item The fibre $\mathcal{U}_{t_{0}}$ has an $S_{0}$-integral adelic point i.e. $\mathcal{U}_{t_{0}}(\mathbb{A}_{S_{0}})\neq \emptyset$.
\end{enumerate}
\end{definition}
\begin{definition}
Let $P_{T}=(x_{v},y_{v},t_{v})_{v\in T}$ be a suitable partial adelic point then we define $T_{0}\subseteq T$ to be the subset of places $v$ which lie in $S_{0}$ or $\valv(dp_{J}(t_{v}))=1$.
\end{definition}
\begin{definition}
Let $t$ be a $T$-admissible point with  $\{u_{i}\}_{i\in J}$ the places appearing in condition $(2)$ of Definition \ref{defn: admissible points}, then we denote by $T(t):=T\cup \{u_{i}\}_{i\in J}$ and $T_{0}(t):=T_{0}\cup \{u_{i}\}_{i\in J}$.
\end{definition}
\begin{proposition}\label{prop: can always find T-admissible point} Let $T$ be a finite set of places containing $S$ and $P_{T}$ a suitable partial adelic point with respect to $T$ on $\mathcal{U}$. Then there exists a $T$-admissible point $t$ on $\mathbb{A}^{1}_{\mathbb{Z}_{S_{0}}}$ with respect to $P_{T}$.
\end{proposition}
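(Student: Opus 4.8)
The plan is to construct $t_0$ by invoking Serre's form $(\mathrm{H}_1)$ of Schinzel's Hypothesis (Conjecture~\ref{conjecture: Schinzel's Hypothesis H1}) applied to the linear polynomials $p_i$. Before that I would record the harmless normalisation that $S_0$ (hence $S$ and $T$) may be assumed to contain every prime dividing some $c_i$, so that each $c_i$ is a unit in $\mathbb{Z}_{S_0}$; then $p_j(-d_i/c_i)=\Delta_{i,j}\cdot(c_j/c_i)$ differs from $\Delta_{i,j}$ by a unit of $\mathbb{Z}_S$, and since $v\mid a$ implies $v\mid d$ it follows that every $aD_i^{A}$ is a $v$-unit for all $v$ outside $S\subseteq T$. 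The $p_i$ are non-constant, irreducible, and pairwise non-proportional ($\Delta_{i,j}\neq0$), so a prime outside $S_{\text{bad}}\subseteq T$ divides at most one of the values $p_i(t_0)$; and, the $p_i$ having coprime $S_0$-integral coefficients, the places one must exclude in $(\mathrm{H}_1)$ all lie in $T$. Applying $(\mathrm{H}_1)$ with local target $\lambda_v:=t_v$ at each finite $v\in T$ (legitimate as these $t_v$ are $v$-integral, the partial adelic point arising from an $S_0$-integral adelic point) produces $t_0\in\mathbb{Q}$, $v$-integral outside $T$, arbitrarily close to $t_v$ at each finite $v\in T$, and such that every $p_i(t_0)$ is a $v$-unit outside $T$ except for a simple zero at one place $u_i$; in particular $t_0\in\mathbb{Z}_{S_0}$.

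Now I would check the three conditions of Definition~\ref{defn: admissible points}. Condition (1) is immediate: taking $t_0$ close enough to $t_v$ forces $p_i(t_0)$ into the square class of $p_i(t_v)$ at $v$, which is legitimate because $p_i(t_v)\neq0$ by suitability. For (2): outside $T$ the integer $d$ is a unit, so $\valv(p_J(t_0))=\sum_i\valv(p_i(t_0))$; two coincident $u_i$ would produce a common prime factor of some $\Delta_{i,j}$, hence a place of $S_{\text{bad}}\subseteq T$, so the $u_i$ are distinct and outside $T$, and each $p_i(t)$ is a unit outside $T$ except for a uniformiser at $u_i$. This is exactly (2), with $u_i$ the distinguished place.

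The substance is (3), $\mathcal{U}_{t_0}(\mathbb{A}_{S_0})\neq\emptyset$, which I would verify by exhibiting a local point at every place. At $v\in T$: since $dp_J(t_v)\neq0$ the morphism $\pi$ is smooth at $P_v$, so for $t_0$ close enough to $t_v$ the implicit function theorem lifts $P_v$ to $Q_v\in\mathcal{U}_{t_0}(\mathbb{Q}_v)$; if moreover $v\in T\setminus S_0$ then $P_v$ is a $\mathbb{Z}_v$-point of the affine conic, automatically a smooth point of $\mathcal{U}$ over $\mathbb{Z}_v$ because $ap_A(t_v)x_v^2+bp_B(t_v)y_v^2=1$ forces one of the partials $2ap_A(t_v)x_v$, $2bp_B(t_v)y_v$ to be a $v$-unit ($v\neq2$), so the lift lies in $\mathcal{U}_{t_0}(\mathbb{Z}_v)$; solubility at the archimedean place is arranged by the archimedean flexibility of $(\mathrm{H}_1)$. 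At $v\notin T$ with $v\nmid p_J(t_0)$: $ap_A(t_0),bp_B(t_0)\in\mathbb{Z}_v^{*}$, the conic has good reduction, its reduction is a smooth conic over $\mathbb{F}_v$ hence has a point, and (every $\mathbb{F}_v$-point being smooth, $v\neq2$) this lifts to $\mathcal{U}_{t_0}(\mathbb{Z}_v)$. The only remaining places are the $u_i$: there $\valv(p_i(t_0))=1$ while every other $p_j(t_0)$ and $d$ is a $u_i$-unit, so Lemma~\ref{lemma: locally soluble fibre} reduces $\mathcal{U}_{t_0}(\mathbb{Z}_{u_i})\neq\emptyset$ to the vanishing of $[aD_i^{A}]_{u_i}$, equivalently — as $aD_i^{A}$ is a $u_i$-unit, $p_i(t_0)$ a $u_i$-uniformiser and $u_i\neq2$ — of $\langle aD_i^{A},p_i(t_0)\rangle_{u_i}$.

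To obtain this last vanishing I would apply global reciprocity to the constant class $(aD_i^{A},p_i(t_0))\in\Br\mathbb{Q}$ — the restriction of the vertical class $\mathcal{A}_i$ to the fibre $\mathcal{U}_{t_0}$, on which $p_i\equiv p_i(t_0)$ — so that $\sum_v\langle aD_i^{A},p_i(t_0)\rangle_v=0$. For $v\notin T\cup\{u_j:j\in J\}$ both entries are $v$-units and $v\neq2$, so the term vanishes; for $v=u_j$ with $j\neq i$ the factor $p_i(t_0)$ is a $u_j$-unit and $aD_i^{A}$ a $u_j$-unit, so again zero; and for $v\in T$ condition (1) gives $\langle aD_i^{A},p_i(t_0)\rangle_v=\langle aD_i^{A},p_i(t_v)\rangle_v=\inv_v\mathcal{A}_i(P_v)$, as in the proof of Lemma~\ref{lemma: Brauer element pairing trivially at place}. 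Hence $\langle aD_i^{A},p_i(t_0)\rangle_{u_i}=-\sum_{v\in T}\inv_v\mathcal{A}_i(P_v)$, which is $0$ by condition (5) of Definition~\ref{defn: suitable partial adelic point}. Collecting the local points above yields an $S_0$-integral adelic point on $\mathcal{U}_{t_0}$, so $t_0$ is $T$-admissible. The main obstacle is precisely this last step: one has no prior control over the primes $u_i$ produced by $(\mathrm{H}_1)$, and local solubility of the fibre there is a Brauer--Manin type constraint; the argument closes only because $\mathcal{A}_i$ is vertical and restricts to a constant Brauer class on each fibre of $\pi$, which lets reciprocity in $\Br\mathbb{Q}$ be matched against the orthogonality relation $\sum_v\inv_v\mathcal{A}_i(P_v)=0$ built into the definition of a suitable partial adelic point. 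A secondary, routine point is solubility at the places of $T\setminus S_0$ where $\valv(dp_J(t_v))=1$ and at $\infty$, handled respectively by the smoothness-from-integrality observation above and by the archimedean freedom in $(\mathrm{H}_1)$.
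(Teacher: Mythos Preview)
Your proof is correct and follows essentially the same route as the paper's: apply $(\mathrm{H}_1)$ to the family $\{p_i\}$ with local targets $t_v$, use the inverse function theorem at places of $T$, smooth reduction and Hensel at places outside $T(t_0)$, and at each $u_i$ combine Lemma~\ref{lemma: locally soluble fibre} with global reciprocity for $(aD_i^A,p_i(t_0))$ together with condition~(5) of Definition~\ref{defn: suitable partial adelic point}. You supply a few details the paper leaves implicit---the normalisation making $c_i$ an $S_0$-unit so that $aD_i^A$ is a unit outside $S$, the distinctness of the $u_i$, and the integrality of the lifted point at $v\in T\setminus S_0$---but the architecture is identical.
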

\begin{proof}
By Conjecture \ref{conjecture: Schinzel's Hypothesis H1} we can find $t\in \mathbb{Z}_{S_{0}}$ and places $u_{i}$ for $i\in J$ such that conditions $(1)-(2)$ of Definition \ref{defn: admissible points} are satisfied. Now it is sufficient to show that the fibre above $t$ has an $S_{0}$-integral adelic point. One applies the inverse function theorem \cite[pg. 73]{Ser64} to see that for all $v\in T$ we have $\mathcal{U}_{t}(\mathbb{Z}_{v})\neq \emptyset$. For $v\not\in T\cup \{u_{i}\}_{i\in J}$ the special fibre of $\mathcal{U}_{t}\rightarrow \Spec \mathbb{Z}_{v}$ is smooth and has a point over $\mathbb{F}_{v}$ (as it is a smooth affine conic), hence $\mathcal{U}_{t}(\mathbb{Z}_{v})\neq \emptyset$ by Hensel's Lemma. We now focus on $v=u_{i}$ for some $i\in J$. By quadratic reciprocity \[
\langle aD^{A}_{i},p_{i}(t)\rangle_{u_{i}}=\sum_{v\in T}\langle aD^{A}_{i},p_{i}(t)\rangle_{v}=\sum_{v\in T}\langle aD^{A}_{i},p_{i}(t_{v})\rangle_{v}.
\] By property $(5)$ of Definition \ref{defn: suitable partial adelic point} we have that this sum is zero. As $p_{i}(t)$ is a uniformizer at $u_{i}$ and $aD^{A}_{i}$ is a $u_{i}$-adic unit, it follows that $aD^{A}_{i}$ is a square, hence by Lemma \ref{lemma: locally soluble fibre}, $\mathcal{U}_{t}(\mathbb{Z}_{u_{i}})\neq \emptyset$.
\end{proof}
 \section{Selmer group of admissible fibres}
Give a suitable partial adelic point $(P_{v})_{v\in T}$ and a $T$-admissible point $t_{0}$ of $P_{T}$ we want to be able to control the size of $\Sel(\hat{\mathcal{T}}_{t_{0}},T_{0}(t_{0}))$. However, the first natural question is: 
\begin{question}
Given two $T$-admissible points $t_{0},t_{1}$ of $P_{T}$, how do $\dim_{\mathbb{F}_{2}}\Sel(\hat{\mathcal{T}}_{t_{0}},T_{0}(t_{0}))$ and $\dim_{\mathbb{F}_{2}}\Sel(\hat{\mathcal{T}}_{t_{1}},T_{0}(t_{1}))$ differ? 
\end{question}
In this section we show that the choice of $T$-admissible point $t_{0}$ has no effect on dimension of $\Sel(\hat{\mathcal{T}}_{t_{0}},T_{0}(t_{0}))$ as a $\mathbb{F}_{2}$-vector space. 

\begin{definition}
Let $T$ be a finite set of places of $\mathbb{Q}$ containing $S$. Let $J_{T}\subseteq G$ generated by $I_{T}$ and the symbols $[p_{i}]$. Similarly, $J^{T}\subseteq G$ generated by $I^{T}$ and the symbols $[p_{i}]$ i.e. \[
J^{T},J_{T}:=\mathcal{O}^{*}_{T}/(\mathcal{O}^{*}_{T})^{2}\bigoplus \text{Span}_{\mathbb{F}_{2}}\{[p_{i}]:i\in J\}
\]
\end{definition}
\begin{definition}
For a $T$-admissible point $t_{0}$, we denote by $ev_{t_{0}},ev^{t_{0}}$ the maps \[
ev_{t_{0}}:J_{T}\rightarrow I_{T(t_{0})}, \ \ [c][p_{J'}]\mapsto [c][p_{J'}(t_{0})]
\] and \[
ev^{t_{0}}:J^{T}\rightarrow I^{T(t_{0})}, \ \ [c][p_{J'}]\mapsto [c][p_{J'}(t_{0})]
\]
\end{definition}
\begin{remark}
$ev_{t_{0}}$ and $ev^{t_{0}}$ are isomorphisms of $\mathbb{F}_{2}$-vector spaces.
\end{remark}
\begin{definition}
We will define the \emph{relative Selmer group} and the \emph{relative dual Selmer group} as\[
R_{P_{T},t_{0}}:=ev_{t_{0}}^{-1}\left(\Sel(\mathcal{T}_{t_{0}},T_{0}(t_{0}))\right) \ \text{and} \ \hat{R}_{P_{T},t_{0}}:=ev_{t_{0}}^{-1}\left(\Sel(\hat{\mathcal{T}}_{t_{0}},T_{0}(t_{0}))\right).
\]
\end{definition}
\begin{remark}
Recall Definition \ref{defn: definition of Wv}. In the rest of the paper we use the following analogous notation: given a $T$-admissible point $t$ and a place $v\in T(t)$ we denote by $W^{v}(t)\subseteq V^{v}$ for the space 
\[ W^{v}(t):=\begin{cases} [-dp_{J}(t)] \ &\text{if}\ v\in T_{0}(t),\\
\im\left(\mathcal{O}_{v}^{*}/(\mathcal{O}_{v}^{*})^{2}\rightarrow k_{v}^{*}/(k_{v}^{*})^2\right)\ &\text{otherwise}.
\end{cases}\] 
We then define $W_{v}(t)\subseteq V_{v}$ to be the orthogonal complement of $W^{v}(t)$ with respect to $\langle -,- \rangle_{v}$.
\end{remark}
\begin{lemma}\label{lemma: selmer conditions}
  Given a finite set $T\subset \Omega_{\mathbb{Q}}$ containing $S$ and $t_{0}$ a $T$-admissible point with respect to a suitable partial adelic point $P_{T}$. Let $i\in J$ and $v\in T_{0}(t_{0})$ where $v\not\in S_{0}\cup S_{\text{bad}}$ and odd, such that $t_{0}\equiv -\frac{d_{i}}{c_{i}}$ mod $\mathfrak{m}_{v}$. If $x=[c][p_{J'}]\in J^{T}$ and $\valv(c)=0$ then $ev^{t_{0}}(x)\in W^{v}(t_{0})$ if and only if \begin{enumerate}
      \item $[ev^{t_{0}}(x)]_{v}=0$ if $i\not\in J'$,
      \item $[ev^{t_{0}}(x[-d][p_{J}])]_{v}=0$ if $i\in J'$.
  \end{enumerate} Furthermore,
  $ev_{t_{0}}(x)\in W_{v}(t_{0})$ if and only if \begin{enumerate}[resume]
\item $[ev_{t_{0}}(x)]_{v}$ if $i\not\in J'$,
\item $[ev^{t_{0}}(x[d][p_{J}])]_{v}=0$ if $i\in J'$.
\end{enumerate}
\end{lemma}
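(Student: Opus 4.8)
The plan is to reduce the statement to an elementary computation inside the two‑dimensional $\mathbb{F}_{2}$‑vector space $V^{v}=V_{v}=\mathbb{Q}_{v}^{*}/(\mathbb{Q}_{v}^{*})^{2}$ attached to the odd finite place $v$, using $v\notin S_{0}\cup S_{\text{bad}}$ to control which quantities are $v$‑adic units. First I would establish the local picture at $v$: since $v\notin S_{\text{bad}}$ the element $d$ and all the $\Delta_{i,j}$ are units at $v$, and combined with $p_{i}(t_{0})\in\mathfrak{m}_{v}$ (which holds as $t_{0}\equiv-\frac{d_{i}}{c_{i}}\ (\text{mod}\ \mathfrak{m}_{v})$) this forces $p_{j}(t_{0})$ to be a $v$‑adic unit for every $j\neq i$ — otherwise $v\mid c_{j}p_{i}(t_{0})-c_{i}p_{j}(t_{0})=c_{j}d_{i}-c_{i}d_{j}$, and since $(c_{j},d_{j})=1$ forces $v\nmid c_{j}$ this would give $v\mid\Delta_{i,j}$. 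Hence $\valv(dp_{J}(t_{0}))=\valv(p_{i}(t_{0}))$; and because $v\in T_{0}(t_{0})\setminus S_{0}$ we have $\valv(dp_{J}(t_{v}))=1$, so exactly one $p_{j}(t_{v})$ has $v$‑valuation $1$. Comparing with admissibility condition (1) of Definition \ref{defn: admissible points} (the equalities $[p_{j}(t_{0})]=[p_{j}(t_{v})]$ in $\mathbb{Q}_{v}^{*}/(\mathbb{Q}_{v}^{*})^{2}$) and using that the $p_{j}(t_{0})$, $j\neq i$, are units, the distinguished place must be $i$ and $\valv(p_{i}(t_{0}))$ is odd; in particular $[-dp_{J}(t_{0})]$ and $[dp_{J}(t_{0})]$ are nontrivial of odd valuation, so $W^{v}(t_{0})=\langle[-dp_{J}(t_{0})]\rangle$ is one‑dimensional.

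Next I would identify $W_{v}(t_{0})$. By definition it is the orthogonal complement of $W^{v}(t_{0})=\langle[-dp_{J}(t_{0})]\rangle$ under the non‑degenerate Hilbert pairing $\langle-,-\rangle_{v}$ on the two‑dimensional space $\mathbb{Q}_{v}^{*}/(\mathbb{Q}_{v}^{*})^{2}$, so it is one‑dimensional; and since $\langle\alpha,-\alpha\rangle_{v}=0$ for all $\alpha$ (the conic $\alpha x^{2}-\alpha y^{2}=z^{2}$ has the point $(1,1,0)$), the nontrivial class $[dp_{J}(t_{0})]$ lies in it. Hence $W_{v}(t_{0})=\langle[dp_{J}(t_{0})]\rangle$.

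Then I would run the membership test. For $x=[c][p_{J'}]\in J^{T}$ with $\valv(c)=0$ one has $ev^{t_{0}}(x)=[c\,p_{J'}(t_{0})]$, with $c$ and the $p_{j}(t_{0})$, $j\in J'\setminus\{i\}$, units at $v$; so $\valv(ev^{t_{0}}(x))=0$ when $i\notin J'$ and $\valv(ev^{t_{0}}(x))=\valv(p_{i}(t_{0}))$ is odd when $i\in J'$. A one‑dimensional subspace of $\mathbb{Q}_{v}^{*}/(\mathbb{Q}_{v}^{*})^{2}$ spanned by a class of odd valuation consists of exactly $1$ and that generator, of even and odd valuation respectively. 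Thus when $i\notin J'$, $ev^{t_{0}}(x)$ lies in $W^{v}(t_{0})$ iff it equals $1$, i.e. iff $[ev^{t_{0}}(x)]_{v}=0$, which is condition (1); and as $ev_{t_{0}}(x)$ has the same image in $\mathbb{Q}_{v}^{*}/(\mathbb{Q}_{v}^{*})^{2}$, the same argument for $W_{v}(t_{0})$ gives condition (3). When $i\in J'$, $ev^{t_{0}}(x)$ lies in $W^{v}(t_{0})$ iff it equals $[-dp_{J}(t_{0})]$, i.e. iff $ev^{t_{0}}(x)\cdot[-dp_{J}(t_{0})]$ is a square at $v$; since $ev^{t_{0}}$ is a homomorphism and $J'\subseteq J$, $ev^{t_{0}}(x)\cdot[-dp_{J}(t_{0})]=ev^{t_{0}}(x[-d][p_{J}])$, which is condition (2), and the identical computation with $W_{v}(t_{0})=\langle[dp_{J}(t_{0})]\rangle$ (so with $ev^{t_{0}}(x[d][p_{J}])$ replacing $ev^{t_{0}}(x[-d][p_{J}])$) gives condition (4).

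The main obstacle will be the last assertion of the first step: extracting from the definitions of $T_{0}(t_{0})$ and of $T$‑admissibility that $\valv(dp_{J}(t_{0}))=\valv(p_{i}(t_{0}))$ is odd, i.e. that $t_{0}$ reproduces at $v$ the uniformizing behaviour of the chosen partial adelic point. Everything after that is the two‑dimensional Hilbert‑symbol bookkeeping above, whose only genuine input is the isotropy identity $\langle\alpha,-\alpha\rangle_{v}=0$ used to pin down $W_{v}(t_{0})$.
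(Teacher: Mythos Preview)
Your proposal is correct and follows the same route as the paper. One small point you should tighten: your argument that $\valv(p_{i}(t_{0}))$ is odd invokes $t_{v}$, which exists only for $v\in T$; when $v\in T_{0}(t_{0})\setminus T$ (i.e.\ $v=u_{j}^{0}$ for some $j$) the paper instead appeals directly to admissibility condition~(2) of Definition~\ref{defn: admissible points}, and your first step then forces $j=i$, giving $\valv(p_{i}(t_{0}))=1$. For the final membership test the paper computes the Hilbert symbols $\langle ev^{t_{0}}(x),dp_{J}(t_{0})\rangle_{v}$ explicitly via Serre's identities rather than arguing by valuation parity inside the two-dimensional space as you do, but the two presentations are equivalent.
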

\begin{proof}
For $i\neq j$ \[
p_{j}(t_{0})\equiv c_{j}t_{0}+d_{j}\equiv  -c_{j}\frac{d_{i}}{c_{i}}+d_{j} \ \text{mod}\ \mathfrak{m}_{v}
\] If $p_{j}(t_{0})\equiv 0\ \text{mod}\ \mathfrak{m}_{v}$ we have that $\Delta_{i,j}\equiv 0$ mod $\mathfrak{m}_{v}$ but as $v\not\in S_{bad}$, we can deduce $\valv(p_{j}(t_{0}))= 0$ for $i\neq j$. By assumption on $v$ we have that $p_{i}(t_{0})\equiv 0$ mod $\mathfrak{m}_{v}$. As $P_{T}$ is a suitable partial adelic point $\valv(dp_{J}(t_{v}))\leq 1$ for $v\in T\backslash S_{0}$ and $t_{0}$ approximates $t_{v}$ for $v\in T$. We can conclude for $v\in T\backslash S_{0}$ that $\valv(dp_{J}(t_{0}))\leq 1$. If $v\not \in T$ i.e. it is one of the primes $u_{i}^{0}$ (we can assume that each $u_{i}^{0}$ is distinct for each $i$) using the definition of $T$-admissible point $\valuiz(dp_{J})\leq 1$. Moreover, $\valv(p_{i}(t_{0}))= \valv(dp_{J}(t_{0}))=1$ and \[
\valv(ev^{t_{0}}(x)):=\begin{cases} 1 & \text{if}\ i\in J',\\
0 & \text{if}\ i\not\in J'.
\end{cases}
\] Recall $W^{v}(t_{0})$ is the subgroup of $V^{v}$ generated by $[-dp_{J}]_{v}$ and as $v$ is odd $W_{v}(t_{0})$ it is the subgroup of $V_{v}$ generated by $[dp_{J}]_{v}$. Then $ev_{t_{0}}(x)\in W_{v}(t_{0})$ if and only if pairs (with the pairing from Definition \ref{defn: Hilbert symbol pairing}) trivially with $dp_{J}(t_{0})$. Similarly, for $ev^{t_{0}}(x)$. Assume for now $i\not\in J'$. We showed that $\valv(dp_{J}(t_{0}))=1$, hence it is a uniformiser of $\mathbb{Z}_{v}$, so computing the Hilbert symbol \[
\langle ev^{t_{0}}(x),dp_{J}(t_{0})\rangle_{v}:=\begin{cases} 0 & \text{if}\ [ev^{t_{0}}(x)]_{v}=0.\\
1 & \text{otherwise}.
\end{cases}
\] This completes (1). If $i\in J'$ then both $ev^{t_{0}}(x)$ and $dp_{J}(t_{0})$ are uniformisers and by \cite[\S 3, Prop 2i,2iv)]{S73}
\[
\langle cp_{J'}(t_{0}),dp_{J}(t_{0})\rangle_{v} = \langle dp_{J}(t_{0}),-cdp_{(J')^{c}}(t_{0})\rangle_{v}=\langle dp_{J}(t_{0}),\ev^{t_{0}}(x[-d][p_{J}])\rangle_{u_{i}}
\] Then 
\[
\langle ev^{t_{0}}(x),dp_{J}(t_{0})\rangle_{v}:=\begin{cases} 0 & \text{if}\ [ev^{t_{0}}(x[-d][p_{J}])]_{v}=0.\\
1 & \text{otherwise}.
\end{cases}
\] This completes (2). The proofs for (3) and (4) are completely analogous to (1) and (2).
\end{proof}
\begin{remark}
Taking into account Proposition \ref{prop: Harpaz selmer conditions}, we call the conditions from Lemma \ref{lemma: selmer conditions} Selmer conditions.
\end{remark}
\begin{lemma}\label{lemma: selmer group of admissible fibres 2}
Let $x=[c][p_{J'}]\in J^{T}$. Then $x\in \hat{R}_{P_{T},t_{0}}$ if and only if\begin{enumerate}
    \item For every $v\in T$: $ev^{t_{0}}(x)\in W^{v}(t_{0})$.
    \item For every $i\in J$: \begin{align*}
        \langle p_{i}(t_{0}),ev^{t_{0}}(x)\rangle_{u_{i}}&=0\ \text{if}\ i\not\in J',\\
        \langle p_{i}(t_{0}),ev^{t_{0}}(x[-d][p_{J}])\rangle_{u_{i}}&=0\ \text{if}\ i\in J'.
    \end{align*}
\end{enumerate}
\end{lemma}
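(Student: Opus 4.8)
The plan is to unwind the definition of $\hat{R}_{P_T,t_0}$ through the identification of the dual Selmer group provided by Proposition \ref{prop: Harpaz selmer conditions}, applied to the norm one torus $\hat{\mathcal{T}}_{t_0}$ over the enlarged set $T_0(t_0)$, and then to translate the resulting local conditions at each place into the explicit symbol conditions in the statement. Recall that $\hat{R}_{P_T,t_0} = (ev^{t_0})^{-1}\bigl(\Sel(\hat{\mathcal{T}}_{t_0}, T_0(t_0))\bigr)$, and that by Proposition \ref{prop: Harpaz selmer conditions} (item (1) of the dual Selmer part) the group $\Sel(\hat{\mathcal{T}}_{t_0}, T_0(t_0))$ is identified with $I^{T'} \cap W^{T'}$ for a suitable auxiliary set $T'$, equivalently, by item (2) or (3), with the appropriate kernel of one of the induced pairings. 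The crux is that an element lies in the dual Selmer group precisely when its image in $V^v$ lands in $W^v(t_0)$ for \emph{every} place $v$ in the relevant set — so the proof is a place-by-place case analysis matching the two global conditions (1) and (2) in the statement against the local membership conditions.

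First I would fix a finite set $T'$ of places containing $T(t_0)$ with $\Pic\mathcal{O}_{T'}=0$, and observe, as in the passage following Definition \ref{defn: definition of Wv} (now in the relative notation introduced just before Lemma \ref{lemma: selmer conditions}), that for $v \notin T(t_0)$ the space $W^v(t_0)$ is the image of the units $\mathcal{O}_v^*/(\mathcal{O}_v^*)^2$, so that the membership condition at such $v$ is automatic for any $x = [c][p_{J'}] \in J^T$ whose $p_i$-components are units outside $T(t_0)$ except at the designated primes $u_i$; this reduces the global condition to the finitely many places in $T(t_0) = T \cup \{u_i\}_{i\in J}$. Then I split $T(t_0)$ into the places $v \in T$, which give condition (1) of the statement verbatim (the membership $ev^{t_0}(x) \in W^v(t_0)$ is exactly what is asserted), and the places $v = u_i$, $i \in J$. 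At each $u_i$ the space $W^{u_i}(t_0)$ is, by the relative notation, the subgroup of $V^{u_i}$ generated by $[-dp_J(t_0)]$ (since $u_i \in T_0(t_0)$ by construction — $\valuiz(dp_J(t_0))=1$). Applying Lemma \ref{lemma: selmer conditions} with this $v = u_i$ — noting that $u_i$ is odd, outside $S_0 \cup S_{\mathrm{bad}}$, and satisfies $t_0 \equiv -d_i/c_i \pmod{\mathfrak{m}_{u_i}}$ because $p_i(t_0)$ is a uniformiser there while all other $p_j(t_0)$ are units — translates "$ev^{t_0}(x) \in W^{u_i}(t_0)$" into the symbol conditions $\langle p_i(t_0), ev^{t_0}(x)\rangle_{u_i} = 0$ when $i \notin J'$ and $\langle p_i(t_0), ev^{t_0}(x[-d][p_J])\rangle_{u_i} = 0$ when $i \in J'$, which is exactly condition (2).

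The remaining point is to justify that the conditions at $v \in T(t_0)$ are the \emph{only} conditions — i.e. that the auxiliary places in $T' \setminus T(t_0)$ impose nothing and that no global reciprocity obstruction is hidden. For this I would invoke Proposition \ref{prop: Harpaz selmer conditions}: the identification of $\Sel(\hat{\mathcal{T}}_{t_0}, T_0(t_0))$ with $I^{T'} \cap W^{T'}$ already builds in the constraint that the element lie in $\HH^1_{\et}(\mathcal{O}_{T'}, \mathbb{Z}/2)$ (so it is a global class, automatically a unit outside $T'$), and $W^{T'} = \bigoplus_{v \in T'} W^v(t_0)$ with $W^v(t_0)$ the full unit image for $v \in T' \setminus T(t_0)$, hence contributing no restriction. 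Since $x \in J^T$ corresponds under $ev^{t_0}$ to precisely such a class in $I^{T'}$ (its support being contained in $T(t_0)$), membership in the dual Selmer group is equivalent to the conjunction of the local conditions at $v \in T(t_0)$, which is (1) and (2). The main obstacle I anticipate is bookkeeping rather than conceptual: making sure the set $T_0(t_0)$ (which determines where $W^v(t_0)$ is the "$[-dp_J]$" line versus the unit image) is correctly tracked through the definitions, and confirming that every $u_i$ does land in $T_0(t_0)$ so that Lemma \ref{lemma: selmer conditions} applies with the stated form of $W^{u_i}(t_0)$; once that is pinned down, the argument is a direct assembly of the two lemmas and Harpaz's proposition.
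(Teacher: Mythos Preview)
Your proposal is correct and follows essentially the same approach as the paper: both arguments reduce membership in $\hat{R}_{P_T,t_0}$ to the local conditions $ev^{t_0}(x)\in W^{v}(t_0)$ over $v\in T(t_0)$ via Proposition~\ref{prop: Harpaz selmer conditions}, keep condition~(1) verbatim for $v\in T$, and at each $u_i$ convert membership in $W^{u_i}(t_0)$ into the stated Hilbert symbol. The only cosmetic difference is that the paper computes $\langle ev^{t_0}(x),\,dp_J(t_0)\rangle_{u_i}$ directly from Serre's symbol identities, whereas you route through Lemma~\ref{lemma: selmer conditions} and then use that $p_i(t_0)$ is a uniformiser at $u_i$ to pass from the square-class conditions there to the Hilbert-symbol formulation; both are equivalent one-line manipulations.
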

\begin{proof}
Let $\alpha:=ev^{t_{0}}(x)$. Recall $\valu(p_{i}(t_{0}))=\valu(dp_{J}(t_{0}))=1$, hence
\[
dp_{J}(t_{0})=d\left(\prod\limits_{k\neq i}p_{k}(t_{0})\right)p_{i}(t_{0})
\] where $d\prod\limits_{k\neq i}p_{k}(t_{0})\in \mathbb{Z}_{u_{i}}^{*}$. Moreover, $\valu(\alpha)=1$ if $i\in J$ and $0$ otherwise. If $i\not\in J$ then we know $\alpha$ is a unit and by \cite[\S3 Prop 2iii,Lemma 1.2]{S73}   \[
\langle\alpha,dp_{J}(t_{0})\rangle_{u_{i}}=\langle p_{i}(t_{0}),\alpha\rangle_{u_{i}}.
\] If $i\in J$ then by \cite[\S3 Prop 2iv]{S73} 
\[
\langle\alpha,dp_{J}(t_{0})\rangle_{u_{i}}=\langle p_{i}(t_{0}),-cdp_{(J')^{c}}(t_{0})\rangle_{u_{i}}=\langle p_{i}(t_{0}),ev^{t_{0}}(x[-d][p_{J}])\rangle_{u_{i}}.
\] 
\end{proof}
\begin{proposition}\label{prop: choice of T-admissible does not matter}
Let $t_{0},t_{1}$ be $T$-admissible points of the suitable partial adelic point $P_{T}$, then $\hat{R}_{P_{T},t_{0}}\cong \hat{R}_{P_{T},t_{1}}$.
\end{proposition}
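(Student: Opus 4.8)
The plan is to show that the two relative dual Selmer groups $\hat R_{P_T,t_0}$ and $\hat R_{P_T,t_1}$ are described by \emph{identical} Selmer conditions inside the fixed $\mathbb{F}_2$-vector space $J^T$, so that they coincide as subspaces (in particular are isomorphic). The point is that Lemma \ref{lemma: selmer group of admissible fibres 2} expresses $\hat R_{P_T,t_0}$ purely in terms of (a) membership conditions $ev^{t_0}(x)\in W^v(t_0)$ for $v\in T$, and (b) Hilbert-symbol conditions $\langle p_i(t_0),ev^{t_0}(x)\rangle_{u_i}=0$ (resp.\ with $x$ replaced by $x[-d][p_J]$) at the auxiliary primes $u_i$. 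I will argue that each of these conditions, when unwound, depends only on the local data $[p_i(t_v)]_v$ for $v\in T$, which by the definition of $T$-admissibility (condition (1) of Definition \ref{defn: admissible points}) is the same for $t_0$ and $t_1$ since both approximate the common suitable partial adelic point $P_T$.

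First I would handle the conditions at places $v\in T$. For $v\in T\setminus T_0(t_0)$ the space $W^v(t_0)$ is the image of the units $\mathcal{O}_v^*/(\mathcal{O}_v^*)^2$, which is independent of $t_0$ altogether; and since $v\in T$, the inverse function theorem / approximation guarantees $\valv(p_i(t_0))=\valv(p_i(t_v))=\valv(p_i(t_1))$, so the condition $ev^{t_0}(x)\in W^v(t_0)$ is literally the statement that a certain product of the $[c]$ and the $[p_i(t_v)]_v$ is a square unit — the same for both admissible points. For $v\in T_0(t_0)$ we have $W^v(t_0)=\langle[-dp_J(t_0)]\rangle$, and I would invoke Lemma \ref{lemma: selmer conditions}: for the odd "good" places $v$ with $t_0\equiv -d_i/c_i$ the membership is recast as a vanishing of $[ev^{t_0}(x)]_v$ or $[ev^{t_0}(x[-d][p_J])]_v$, again a function only of the $[p_j(t_v)]_v$; for the remaining places in $T_0\subseteq S_0\cup S_{\mathrm{bad}}$, $v$ lies in the fixed set $T$ and $[p_J(t_0)]_v=[p_J(t_v)]_v=[p_J(t_1)]_v$ directly. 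So condition (1) of Lemma \ref{lemma: selmer group of admissible fibres 2} cuts out the same subspace of $J^T$ for $t_0$ and $t_1$.

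Next, the conditions at the auxiliary primes $u_i^0$ and $u_i^1$. These primes genuinely differ for $t_0$ and $t_1$, so I cannot compare the local symbols directly; instead I use \emph{global quadratic reciprocity} for each element $ev^{t_0}(x)\in J^T$ (which lies in $\mathbb{Q}^*/(\mathbb{Q}^*)^2$ after evaluation) paired with $p_i(t_0)$. Exactly as in the proof of Proposition \ref{prop: can always find T-admissible point}, since $p_i(t_0)$ is a unit outside $T\cup\{u_i^0\}$, the sum of local invariants over all places vanishes, hence
\[
\langle p_i(t_0),ev^{t_0}(x)\rangle_{u_i^0}=\sum_{v\in T}\langle p_i(t_0),ev^{t_0}(x)\rangle_v=\sum_{v\in T}\langle p_i(t_v),\,[c]\textstyle\prod_{j\in J'}[p_j(t_v)]\rangle_v,
\]
and the right-hand side depends only on the data $[p_j(t_v)]_v$, $v\in T$, shared by $t_0$ and $t_1$; the same reciprocity trick applies verbatim to the case $i\in J'$ with $x$ replaced by $x[-d][p_J]$. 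Therefore condition (2) of Lemma \ref{lemma: selmer group of admissible fibres 2} also defines the same subspace of $J^T$, and combining with the previous paragraph gives $\hat R_{P_T,t_0}=\hat R_{P_T,t_1}$ as subspaces of $J^T$, a fortiori $\hat R_{P_T,t_0}\cong\hat R_{P_T,t_1}$.

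The main obstacle is the bookkeeping at the auxiliary primes: one must make sure that applying quadratic reciprocity is legitimate (the evaluated element is a genuine rational square class, unramified outside $T\cup\{u_i\}$, and the archimedean/$2$-adic contributions are accounted for) and that the resulting expression in terms of $\{[p_j(t_v)]_v : v\in T\}$ is truly symmetric in $t_0$ and $t_1$ — in particular that the places $u_i^0$ and $u_j^0$ for $j\ne i$ do not secretly contribute, which is why one needs the $u_i$ chosen distinct and disjoint from $T$. Once the reciprocity computation is pinned down, the rest is a direct comparison of the Selmer conditions of Lemmas \ref{lemma: selmer conditions} and \ref{lemma: selmer group of admissible fibres 2}, using only that $T$-admissibility forces agreement of all relevant local square classes on the fixed set $T$.
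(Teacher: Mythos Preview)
Your overall strategy matches the paper's: reduce to Lemma \ref{lemma: selmer group of admissible fibres 2}, handle the places $v\in T$ by the approximation condition, and handle the auxiliary primes $u_i^0,u_i^1$ via quadratic reciprocity. However, the reciprocity step you wrote contains a genuine gap, and the remedy you suggest (``the $u_i$ chosen distinct and disjoint from $T$'') does not fix it.

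Concretely, in your displayed equation
\[
\langle p_i(t_0),ev^{t_0}(x)\rangle_{u_i^0}=\sum_{v\in T}\langle p_i(t_0),ev^{t_0}(x)\rangle_v
\]
the right entry $ev^{t_0}(x)=cp_{J'}(t_0)$ is \emph{not} a unit outside $T\cup\{u_i^0\}$: for each $j\in J'$ with $j\neq i$ it has valuation $1$ at $u_j^0$. At such a place $p_i(t_0)$ is a unit but need not be a square, so $\langle p_i(t_0),ev^{t_0}(x)\rangle_{u_j^0}$ can be nonzero and the sum over $T$ alone does not recover the symbol at $u_i^0$. Distinctness of the $u_j$ only guarantees these extra places exist separately; it does not make their contributions vanish.

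The paper circumvents this by first replacing, \emph{at the single place $u_i^0$}, the $t_0$-dependent quantity $ev^{t_0}(x)$ by the constant $c\prod_{j\in J'}p_j(-d_i/c_i)=cD_i^{J'}$, using $t_0\equiv -d_i/c_i\pmod{u_i^0}$. This constant lies in $\mathcal{O}_T^*$ (the $\Delta_{i,j}$ and $c$ are $S$-units and $S\subseteq T$), so reciprocity now gives cleanly
\[
\langle p_i(t_0),cD_i^{J'}\rangle_{u_i^0}=\sum_{v\in T}\langle p_i(t_0),cD_i^{J'}\rangle_v=\sum_{v\in T}\langle p_i(t_1),cD_i^{J'}\rangle_v=\langle p_i(t_1),cD_i^{J'}\rangle_{u_i^1},
\]
the middle equality by $[p_i(t_0)]_v=[p_i(t_v)]_v=[p_i(t_1)]_v$ for $v\in T$. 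The case $i\in J'$ is handled the same way after passing to $x[-d][p_J]$, using that $-cd$ is a $T$-unit. Once you insert this substitution, your argument goes through and coincides with the paper's.
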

\begin{proof}
Suppose $[c][p_{J'}]\in J_{T}$, let $\alpha_{1}:= ev^{t_{0}}(x)$ and $\alpha_{2}:= ev^{t_{1}}(x)$. What we want to show is $\alpha_{1}\in W^{T(t_{0})}$ if and only if $\alpha_{2}\in W^{T(t_{1})}$. We know if $v \in T$ then $\alpha_{1}\in W^{v}$ if and only if $\alpha_{2}\in W^{v}$, so it is sufficient to show that $\alpha_{1}\in W^{u_{i}^{0}}$ if and only if $\alpha_{2} \in W^{u_{i}^{1}}$ for $i\in J$. By Lemma \ref{lemma: selmer group of admissible fibres 2} this is equivalent to showing that $t_{0}$ satisfies condition Lemma \ref{lemma: selmer group of admissible fibres 2} $(2)$ if and only $t_{1}$ does. Let us first assume $i\not\in J'$ then \[
\langle p_{i}(t_{0}),ev^{t_{0}}(x)\rangle_{u_{i}^{0}}=\langle p_{i}(t_{0}),\prod_{i\in J'}p_{j}\left(\frac{-d_{i}}{c_{i}}\right)\rangle_{u_{i}^{0}}.
\] Applying quadratic reciprocity and using the fact that the right entry of the Hilbert symbol in this case is a unit outside of $T$,\[
\langle p_{i}(t_{0}),\prod_{i\in J'}p_{j}\left(\frac{-d_{i}}{c_{i}}\right)\rangle_{u_{i}^{0}}=\sum\limits_{v\in T}\langle p_{i}(t_{0}),\prod_{i\in J'}p_{j}\left(\frac{-d_{i}}{c_{i}}\right)\rangle_{v}=\sum\limits_{v\in T}\langle p_{i}(t_{1}),\prod_{i\in J'}p_{j}\left(\frac{-d_{i}}{c_{i}}\right)\rangle_{v}
\] which is equal to $\langle p_{i}(t_{1}),ev^{t_{1}}(x)\rangle_{u_{i}^{1}}$. A similar calculation shows if $i\in J'$ then the statement holds using the fact that $-cd$ is a unit outside of $T$.
\end{proof}
\begin{remark}
By Proposition \ref{prop: choice of T-admissible does not matter}, we see that the choice of $T$-admissible point does not matter so we can reduce out notation of $
\hat{R}_{P_{T},t_{i}}$ to $\hat{R}_{P_{T}}$.
\end{remark}

\section{Comparing Selmer groups}\label{sec: comparing Selmer groups}
Let $T$ be a finite set of places and $P_{T}$ a suitable partial adelic point. Further, let $t_{0}$ be a $T$-admissible point with associated places $\{u_{i}^{0}\}_{i\in J}$. The goal of this section is to understand what happens to the size of the Selmar and dual Selmar group when one adds to $T$ an additional place $w\not \in S_{0}$. Let $T_{w}:=T\cup \{w\}$ and let $P_{T_{w}}$ be an extension of $P_{T}$ to $T_{w}$. We denote by $t_{1}$ a $T_{w}$-admissible point with respect to $P_{T_{w}}$ with associated places $\{u_{i}^{1}\}_{i\in J}$. We make the following assumption for the rest of Section \ref{sec: comparing Selmer groups}. However, in the proof of Theorem \ref{thm: main thm} in Section \ref{sec: main thm} this assumption will be satisfied by using (H).
\begin{assumption}
There exists $i_{w}\in J$ such that $p_{i_{w}}(t_{w})$ is a uniformizer. 
\end{assumption}

\begin{lemma}\label{Comparing Selmer groups Lemma 1}
For every $i\neq j\in J$ and $i\neq i_{w}$ we have \[
\langle p_{i}(t_{0}),p_{j}(t_{0})\rangle_{u_{i}^{0}}+\langle p_{i}(t_{1}),p_{j}(t_{1})\rangle_{u_{i}^{1}}=0.
\]
\end{lemma}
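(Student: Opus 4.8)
The plan is to compute both Hilbert symbols explicitly using the fact that, away from $S_0$, the places $u_i^0$ and $u_i^1$ are precisely the places where $p_i(t_0)$ (resp.\ $p_i(t_1)$) is a uniformizer, and then to move everything to the places of $T$ via quadratic reciprocity. First I would record that, since $i \neq i_w$, the place $w$ plays no role for the index $i$: both $t_0$ and $t_1$ are $T$-admissible (resp.\ $T_w$-admissible), so $p_i(t_0)$ is a unit outside $T \cup \{u_i^0\}$ and $p_j(t_0)$ is a unit outside $T \cup \{u_j^0\}$, and similarly with subscripts $1$. Crucially, by the same argument as in the proof of Lemma~\ref{lemma: selmer conditions}, at the place $u_i^0$ one has $\valv(p_j(t_0)) = 0$ for $j \neq i$ (otherwise $\Delta_{i,j}$ would be divisible by $u_i^0 \notin S_{\text{bad}}$), so $p_j(t_0)$ is a $u_i^0$-adic unit; likewise $p_j(t_1)$ is a $u_i^1$-adic unit. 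Hence the right entry of each Hilbert symbol is a unit at the place in question.

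The key step is then to apply quadratic reciprocity to each term separately. Since $p_j(t_0)$ is a unit outside $T \cup \{u_j^0\}$ and a unit at $u_i^0$, the product formula for the Hilbert symbol gives
\[
\langle p_i(t_0), p_j(t_0)\rangle_{u_i^0} = \sum_{v \in T} \langle p_i(t_0), p_j(t_0)\rangle_v + \langle p_i(t_0), p_j(t_0)\rangle_{u_j^0},
\]
and I would argue the last term vanishes: at $u_j^0$, $p_i(t_0)$ is a unit (again $j \neq i$ forces $\valv(p_i(t_0)) = 0$ at $u_j^0$ by the $\Delta_{i,j}$ argument), so both entries are units at the odd place $u_j^0$ and the symbol is trivial. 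The same reasoning applied to $t_1$ gives $\langle p_i(t_1), p_j(t_1)\rangle_{u_i^1} = \sum_{v \in T}\langle p_i(t_1), p_j(t_1)\rangle_v$. Now for each $v \in T$, the $T$-admissibility conditions give $[p_i(t_0)]_v = [p_i(t_v)]_v = [p_i(t_1)]_v$ and likewise for $p_j$ (using that $t_1$ is $T_w$-admissible and $T \subseteq T_w$, so condition~(1) of Definition~\ref{defn: admissible points} holds at every $v \in T$ for both points), hence $\langle p_i(t_0), p_j(t_0)\rangle_v = \langle p_i(t_1), p_j(t_1)\rangle_v$ for every $v \in T$. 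Summing over $v \in T$ and adding the two expressions, the total is $2\sum_{v\in T}\langle p_i(t_v), p_j(t_v)\rangle_v = 0$ in $\mathbb{Z}/2\mathbb{Z}$, which is the claim.

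The main obstacle I anticipate is bookkeeping at the place $w$ and the ramified/dyadic places: one must make sure that $w$ (or $u_{i_w}^1$) does not secretly contribute to the symbol at $u_i^0$ or $u_i^1$ for $i \neq i_w$, and that the places $u_i^0, u_i^1$ really are odd and outside $S_0 \cup S_{\text{bad}}$ so that quadratic reciprocity applies cleanly with the local symbols of units being trivial. This is handled by the definition of admissible point (the $u_i$ are the unique places outside $T$ where $p_i$ ramifies, and they avoid $S_{\text{bad}}$) together with the fact that $p_i$ and $p_j$ have disjoint ramification loci outside $S$ because $\Delta_{i,j} \neq 0$ is an $S$-unit. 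Once these facts are in place, the identity follows purely formally from the product formula and the equality of local classes $[p_i(t_0)]_v = [p_i(t_1)]_v$ at $v \in T$; no genuinely new input beyond Lemma~\ref{lemma: selmer conditions}'s setup is needed.
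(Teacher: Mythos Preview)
Your argument has a genuine gap at the step where you claim the term $\langle p_i(t_0), p_j(t_0)\rangle_{u_j^0}$ vanishes. You assert that ``both entries are units at the odd place $u_j^0$'', but this is false: by the very definition of $u_j^0$ (Definition~\ref{defn: admissible points}(2)), $p_j(t_0)$ is a \emph{uniformizer} at $u_j^0$, not a unit. Hence $\langle p_i(t_0), p_j(t_0)\rangle_{u_j^0}=0$ if and only if $p_i(t_0)$ is a square modulo $u_j^0$, and there is no reason for this to hold. The same problem recurs with the $u_j^1$ term for $t_1$; moreover, since $t_1$ is $T_w$-admissible rather than $T$-admissible, your ``same reasoning'' for $t_1$ silently drops the contribution at $w$, which is nonzero when $j=i_w$ because $p_{i_w}(t_1)$ has odd valuation there.

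The paper avoids all of this by not applying reciprocity to the pair $(p_i(t_0),p_j(t_0))$ directly. It first uses $t_0\equiv -d_i/c_i\pmod{u_i^0}$ to replace $p_j(t_0)$, \emph{at the place $u_i^0$ only}, by the constant $p_j(-d_i/c_i)$. This constant is a unit at every place outside $S\subseteq T$ (the primes dividing it lie in $S_{\text{bad}}$), so quadratic reciprocity for $\langle p_i(t_0),p_j(-d_i/c_i)\rangle$ produces a sum over $T$ alone, with no stray term at $u_j^0$ or $w$. The identical constant appears for $t_1$, and the two sums over $T$ coincide because $[p_i(t_0)]_v=[p_i(t_1)]_v$ for $v\in T$. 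The missing idea is thus to freeze the second entry of the symbol to an $S$-unit \emph{before} invoking reciprocity.
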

\begin{proof}
If $i=j\neq i_{w}$ then the statement is trivial. Now assume that $i\neq i_{w}$ and $i\neq j$. As $p_{i}(t_{0})$ (respectively $p_{i}(t_{1})$) is a uniformizer at $u_{i}^{0}$ (respectively $u_{i}^{1}$) we have that $c_{i}t_{0}+d_{i}\equiv 0$ mod $u_{i}^{0}$ and $c_{i}t_{1}+d_{i}\equiv 0$ mod $u_{i}^{1}$ i.e. $t_{0}\equiv \frac{-d_{i}}{c_{i}}$ mod $u_{i}^{0}$ and $t_{1}\equiv \frac{-d_{i}}{c_{i}}$ mod $u_{i}^{1}$. Hence,  \[
\langle p_{i}(t_{0}),p_{j}(t_{0})\rangle_{u_{i}^{0}}+\langle p_{i}(t_{1}),p_{j}(t_{1})\rangle_{u_{i}^{1}}
=\langle p_{i}(t_{0}),-\frac{d_{i}}{c_{i}}c_{j}+d_{j}\rangle_{u_{i}^{0}}+\langle p_{i}(t_{1}),-\frac{d_{i}}{c_{i}}c_{j}+d_{j}\rangle_{u_{i}^{1}}
\] and by quadratic reciprocity \[
\langle p_{i}(t_{0}),-\frac{d_{i}}{c_{i}}c_{j}+d_{j}\rangle_{u_{i}^{0}}+\langle p_{i}(t_{1}),-\frac{d_{i}}{c_{i}}c_{j}+d_{j}\rangle_{u_{i}^{1}}=2\sum_{v\in T}\langle p_{i}(t_{0}),-\frac{d_{i}}{c_{i}}c_{j}+d_{j}\rangle_{v}=0.
\]
\end{proof}

\begin{lemma}\label{lemma: Comparing Selmer groups Lemma 1.5}
Let $x=[c][p_{J'}]\in J^{T}$ and $i\in (J')^{c}$. If $i=i_{w}$ assume that $cp_{J'}(t_{1})$ is a square at $w$ then 
\[
\langle p_{i}(t_{0}),p_{j}(t_{0})\rangle_{u_{i}^{0}}+\langle p_{i}(t_{1}),p_{j}(t_{1})\rangle_{u_{i}^{1}}=0.
\]
\end{lemma}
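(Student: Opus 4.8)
\textbf{Proof strategy for Lemma \ref{lemma: Comparing Selmer groups Lemma 1.5}.}

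The plan is to reduce to the statement of Lemma \ref{Comparing Selmer groups Lemma 1} and handle the single new case $i = i_w$ separately, using the extra hypothesis. First I would observe that the statement should really read $\langle p_i(t_0), ev^{t_0}(x)\rangle_{u_i^0} + \langle p_i(t_1), ev^{t_1}(x)\rangle_{u_i^1} = 0$ (the displayed $p_j$ is a typo for the $J'$-indexed product $ev(x) = cp_{J'}$), so that for $i \in (J')^c$ we are pairing the uniformizer $p_i(t_\bullet)$ against the value $cp_{J'}(t_\bullet) = c\prod_{j \in J'} p_j(t_\bullet)$. Since $c$ is a $T$-unit and for $j \in J'$ with $j \neq i$ we have (for $v = u_i^0$ or $u_i^1$) that $t_\bullet \equiv -d_i/c_i$ mod $\mathfrak{m}_v$, each factor $p_j(t_\bullet) \equiv -\frac{d_i}{c_i}c_j + d_j = -\Delta_{i,j}/c_i$ up to a $v$-adic unit correction that is independent of whether we use $t_0$ or $t_1$. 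Hence the right entry of the Hilbert symbol is congruent, mod squares and mod the prime in question, to a \emph{fixed} global element $c\prod_{j\in J'}(-\Delta_{i,j}/c_i)$ that is a unit outside $T$ (here I use that $v = u_i^0, u_i^1 \notin S_0 \cup S_{\mathrm{bad}}$ and $v \nmid \Delta_{i,j}$).

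Next, when $i \neq i_w$: the element $p_i(t_0)$ is a uniformizer at $u_i^0$ and a $T$-unit elsewhere, and likewise $p_i(t_1)$ at $u_i^1$; meanwhile the right entry $\gamma := c\prod_{j\in J'}(-\Delta_{i,j}/c_i)$ is a unit outside $T$. So I can apply quadratic reciprocity to $\langle p_i(t_0), \gamma\rangle$ and to $\langle p_i(t_1), \gamma\rangle$: each equals $\sum_{v \in T}\langle p_i(t_\bullet), \gamma\rangle_v$, and since $t_0$ and $t_1$ both approximate the fixed partial adelic point $P_T$ closely at every $v \in T$ (being $T$-admissible, resp. $T_w$-admissible restricted to $T$), we get $\langle p_i(t_0),\gamma\rangle_v = \langle p_i(t_1),\gamma\rangle_v$ for each $v \in T$, hence the two global sums cancel in $\mathbb{Z}/2\mathbb{Z}$. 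This is essentially the computation already carried out in Lemma \ref{Comparing Selmer groups Lemma 1} and in Proposition \ref{prop: choice of T-admissible does not matter}.

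Finally, the case $i = i_w$. Now $p_{i_w}(t_0)$ is still a uniformizer at $u_{i_w}^0$, but the analogue for $t_1$ is different: by the standing Assumption, $p_{i_w}(t_w)$ is a uniformizer at $w$, so when we chose the $T_w$-admissible point $t_1$, the ``extra'' prime $u_{i_w}^1$ where $p_{i_w}(t_1)$ is a uniformizer could be $w$ itself (or the reciprocity bookkeeping now involves $w$). The hypothesis that $cp_{J'}(t_1)$ is a square at $w$ is exactly what is needed so that the contribution $\langle p_{i_w}(t_1), cp_{J'}(t_1)\rangle_w$ vanishes, letting the reciprocity argument go through with the sum ranging over $T$ (not $T_w$) just as before. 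Concretely: apply quadratic reciprocity to $\langle p_{i_w}(t_1), \gamma\rangle$ over $T_w \cup \{u_{i_w}^1\}$; the $u_{i_w}^1$ term is the left-hand quantity, the $w$ term vanishes by the squareness hypothesis, and the remaining $\sum_{v\in T}$ matches the corresponding sum for $t_0$ by proximity to $P_T$. I expect the main obstacle to be purely notational/bookkeeping: being careful about which place plays the role of $u_{i_w}^1$, whether it coincides with $w$, and tracking the unit corrections so that the ``fixed global element'' $\gamma$ really is the same for $t_0$ and $t_1$ — the arithmetic content is entirely quadratic reciprocity plus continuity of the Hilbert symbol, both already used repeatedly above.
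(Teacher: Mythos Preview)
Your proposal is correct and takes essentially the same approach as the paper. For $i \neq i_w$ the paper expands by bimultiplicativity and handles $c$ and each factor $p_j$ separately (invoking Lemma~\ref{Comparing Selmer groups Lemma 1}), whereas you bundle the right entry into the single $T$-unit $\gamma$ before applying reciprocity once --- the same computation rearranged; for $i = i_w$ your argument (reciprocity over $T_w \cup \{u_{i_w}^1\}$, kill the $w$-term via the squareness hypothesis, match the remaining $T$-sum by approximation) is exactly what the paper does.
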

\begin{proof} Assume $i\neq i_{w}$ then $p_{i}(t_{1})$ is only a non-unit at $u_{i}^{1}$ outside of $T$. Consider
\[
\langle p_{i}(t_{0}),cp_{J'}(t_{0})\rangle_{u_{i}^{0}}+\langle p_{i}(t_{1}),cp_{J'}(t_{1})\rangle_{u_{i}^{1}}
\] by using the bimultiplicative property of Hilbert symbols we have this sum is equal to
\[
\langle p_{i}(t_{0}),[c]\rangle_{u_{i}^{0}}+\langle p_{i}(t_{1}),[c]\rangle_{u_{i}^{1}}+\sum_{j\in J'}[\langle p_{i}(t_{0}),p_{j}(t_{0})\rangle_{u_{i}^{0}}+\langle p_{i}(t_{0}),p_{j}(t_{1})\rangle_{u_{i}^{1}}].
\] As $i\neq i_{w}$ and $c$ is a unit outside of $T$ we have that $\langle p_{i}(t_{0}),[c]\rangle_{u_{i}^{0}}=\langle p_{i}(t_{1}),[c]\rangle_{u_{i}^{1}}$ by quadratic reciprocity. Then by Lemma \ref{Comparing Selmer groups Lemma 1} the statement follows for $i\neq i_{w}$. Now assume $i=i_{w}$. By approximation conditions for all $v\in T$ we have 
$\langle p_{i_{w}}(t_{0}),cp_{J'}(t_{0})\rangle_{v}=\langle p_{i_{w}}(t_{1}),cp_{J'}(t_{1})\rangle_{v}$. 
By quadratic reciprocity 
\[
\langle p_{i_{w}}(t_{1}),cp_{J'}(t_{1})\rangle_{u_{i_{w}}^{1}}=\langle p_{i_{w}}(t_{1}),cp_{J'}(t_{1})\rangle_{w}+\sum\limits_{v\in T}\langle p_{i_{w}}(t_{1}),cp_{J'}(t_{1})\rangle_{v}.
\] 
As $cp_{J'}(t_{1})$ is a square at $w$ we have $\langle p_{i_{w}}(t_{1}),cp_{J'}(t_{1})\rangle_{w}=0$. Hence, $\langle p_{i_{w}}(t_{1}),cp_{J'}(t_{1})\rangle_{u_{i_{w}}^{1}}=\langle p_{i_{w}}(t_{0}),cp_{J'}(t_{0})\rangle_{u_{i_{w}}^{0}}$.
\end{proof}
\subsection{Strictly smaller Selmer groups}
We now what to consider the dual Selmer groups of $\mathcal{T}_{t_{0}}$ and $\mathcal{T}_{t_{1}}$. Let \[
V_{w}:=\HH^{1}_{\et}(\mathbb{Q}_{w},\mathbb{Z}/2\mathbb{Z})\ \ V^{w}:=\HH^{1}_{\et}(\mathbb{Q}_{w},\mathbb{Z}/2\mathbb{Z}).
\]
We have two naturals maps\[
\text{loc}_{w}:J_{T_{w}}\rightarrow V_{w}\ \ \ \text{loc}^{w}:J^{T_{w}}\rightarrow V^{w}.
\] where $\text{loc}_{w}:=\phi_{w}\circ ev_{t_{1}}$ where $\phi_{w}:I_{w}\hookrightarrow V_{w}$ and $\text{loc}^{w}:=\phi^{w}\circ ev^{t_{1}}$ where $\phi^{w}:I^{w}\hookrightarrow V^{w}$.

\begin{definition}
For a finite set of places $H$ containing $S$ and a suitable partial adelic point $P_{H}$ we define the subgroups $R^{0}_{P_{H}}\subseteq R_{P_{H}}$ and $\hat{R}^{0}_{P_{H}}\subseteq \hat{R}_{P_{H}}$ to be the subgroup consisting of elements $[c][p_{J'}]$ such that $i_{w}\not\in J'$.
\end{definition}
\begin{definition}
We denote by $P_{0}\subseteq V_{w}$ (resp $P^{0}\subseteq V^{w}$) the image of $R^{0}_{P_{T}}$ via $\text{loc}_{w}$ (resp image of $\hat{R}^{0}_{P_{T}}$ via $\text{loc}^{w}$) i.e. \[
P_{0}:=\text{loc}_{w}(R^{0}_{P_{T}})\ \ \ P^{0}:=\text{loc}^{w}(\hat{R}^{0}_{P_{T}}).
\]
\end{definition}
\begin{definition}
We denote by $P_{1}\subseteq V_{w}$ (resp $P^{1}\subseteq V^{w}$) the image of $R^{0}_{P_{T_{w}}}$ via $\text{loc}_{w}$ (resp image of $\hat{R}^{0}_{P_{T_{w}}}$ via $\text{loc}^{w}$) i.e. \[
P_{1}:=\text{loc}_{w}(R^{0}_{P_{T_{w}}})\ \ \ P^{1}:=\text{loc}^{w}(\hat{R}^{0}_{P_{T_{w}}}).
\]
\end{definition}
\begin{lemma}\label{lemma: eveluating at new admissible makes no differnce away from w} Let $x=[cp_{J'}]\in J_{T}$ then \begin{enumerate}
\item For $v\in T$ $\ev_{t_{0}}(x)\in W_{v}(t_{0})$ if and only if $\ev_{t_{1}}(x)\in W_{v}(t_{1})$,
\item For $i\neq i_{w}\in J$ we have $\ev_{t_{0}}(x)\in W_{u_{0}^{i}}(t_{0})$ if and only if $\ev_{t_{1}}(x)\in W_{u_{1}^{i}}(t_{1})$.
\end{enumerate}
\end{lemma}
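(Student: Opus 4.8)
The plan is to treat the two assertions in parallel and exploit the fact that, by construction, $t_0$ and $t_1$ agree (up to squares) with the fixed local data $P_T$ at every place of $T$, so the only genuine change occurs at the places $u_i^0$ versus $u_i^1$. For part (1), fix $v\in T$. By Lemma~\ref{lemma: selmer conditions} and the discussion preceding it, whether $\ev_{t_0}(x)\in W_v(t_0)$ is a condition involving only the local class $[p_j(t_0)]_v$ for $j\in J$ and the class of $c$ at $v$, together with the local space $W_v(t_0)$. Now $W_v(t_0)$ depends on $t_0$ only through the single class $[-dp_J(t_0)]_v$ (when $v\in T_0$) or not at all (when $v\notin T_0$, where it is just the image of the units). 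Since $t_0\equiv t_v \equiv t_1$ modulo squares at $v$ — both $T$-admissible points approximate the same $P_T$ at $v$ — we get $[p_j(t_0)]_v=[p_j(t_1)]_v$ for all $j$ and hence $[\ev_{t_0}(x)]_v=[\ev_{t_1}(x)]_v$, and likewise $W_v(t_0)=W_v(t_1)$. Therefore $\ev_{t_0}(x)\in W_v(t_0)$ if and only if $\ev_{t_1}(x)\in W_v(t_1)$, which is (1).

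For part (2), fix $i\neq i_w$ and consider the place $u_i^0$ (resp.\ $u_i^1$) where $p_i(t_0)$ (resp.\ $p_i(t_1)$) is a uniformizer. By Lemma~\ref{lemma: selmer conditions}, membership $\ev_{t_0}(x)\in W_{u_i^0}(t_0)$ is equivalent to the vanishing of a Hilbert symbol at $u_i^0$: either $\langle p_i(t_0),\ev_{t_0}(x)\rangle_{u_i^0}=0$ if $i\notin J'$, or $\langle p_i(t_0),\ev_{t_0}(x[d][p_J])\rangle_{u_i^0}=0$ if $i\in J'$ (and symmetrically with $t_1$ and $u_i^1$). In the case $i\notin J'$ this is exactly the symbol controlled by Lemma~\ref{lemma: Comparing Selmer groups Lemma 1.5}: taking the argument $x=[c][p_{J'}]$ there (with $i\in (J')^c$, and the hypothesis on $i_w$ being vacuous since $i\neq i_w$), we obtain
\[
\langle p_i(t_0),cp_{J'}(t_0)\rangle_{u_i^0}+\langle p_i(t_1),cp_{J'}(t_1)\rangle_{u_i^1}=0,
\]
so the two conditions are simultaneously satisfied or violated. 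In the case $i\in J'$, replace $x$ by $x[d][p_J]$, which has the form $[cd][p_{(J')^c}]$, so that $i$ now lies in the complementary index set; Lemma~\ref{lemma: Comparing Selmer groups Lemma 1.5} applies to this new element and yields the same conclusion for the relevant symbols. Combining the two cases proves (2).

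The routine points I would spell out are: the equality $W_v(t_0)=W_v(t_1)$ for $v\in T$, which is immediate from the definition of $W^v(t)$ once one knows $[-dp_J(t_0)]_v=[-dp_J(t_1)]_v$ (again from approximation at $v$); the reduction of ``$i\in J'$'' to ``$i\in(J')^c$'' by passing from $x$ to $x[d][p_J]$, using that $p_J=p_{J'}\cdot p_{(J')^c}$ so the formal symbol transforms correctly in $G$; and that the uniformizer property of $p_i(t_0)$ at $u_i^0$ lets one invoke Lemma~\ref{lemma: Comparing Selmer groups Lemma 1.5} verbatim. The only slightly delicate point — and where I would be most careful — is bookkeeping the constant $c$: in Lemma~\ref{lemma: Comparing Selmer groups Lemma 1.5} the constant is handled via quadratic reciprocity using that $c$ is a unit outside $T$, so when we pass to $x[d][p_J]$ we must check that $cd$ is still a unit outside $T$, which holds because $d\mid$ the defining data and $T\supseteq S\supseteq S_0\cup S_{\mathrm{bad}}$ already absorbs all primes dividing $d$. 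With that observed, both parts follow directly from the two preceding lemmas without further computation.
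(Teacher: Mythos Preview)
Your proposal is correct and follows essentially the same approach as the paper. For (1) both arguments invoke the approximation condition in the definition of $T$-admissible points to conclude $[p_j(t_0)]_v=[p_j(t_1)]_v$ and hence $W_v(t_0)=W_v(t_1)$; for (2) both reduce via Lemma~\ref{lemma: selmer conditions} to a Hilbert-symbol identity at $u_i^0$ versus $u_i^1$ and then appeal to Lemma~\ref{lemma: Comparing Selmer groups Lemma 1.5}. The only cosmetic difference is that the paper packages both cases $i\in J'$ and $i\notin J'$ into the single constant $cD_i^{J'}$ (an $S$-unit, so quadratic reciprocity applies directly), whereas you handle $i\in J'$ by the substitution $x\mapsto x[d][p_J]$ to force $i$ into the complement of the index set before invoking Lemma~\ref{lemma: Comparing Selmer groups Lemma 1.5}; your check that $cd\in\mathcal{O}_T^*$ (since $S_{\mathrm{bad}}$ absorbs the primes dividing $d$) is exactly what makes this legitimate.
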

\begin{proof}
$(1)$ follows for approximation conditions in Definition \ref{defn: admissible points}. We saw in Lemma \ref{lemma: selmer conditions} that $ev_{t_{k}}(x)\in W_{u^{k}_{i}}(t_{k})$ for $k=0,1$ if and only if $[cD^{J'}_{i}]_{u_{i}^{k}}=0$. By Lemma \ref{lemma: Comparing Selmer groups Lemma 1.5}\[
\langle p_{i}(t_{0}),cD^{J'}_{i}\rangle_{u_{i}^{0}}+\langle p_{i}(t_{1}),cD^{J'}_{i}\rangle_{u_{i}^{1}}=0.
\] Using the fact that $\text{val}_{u_{i}^{0}}(p_{i}(t_{0}))=\text{val}_{u_{i}^{1}}(p_{i}(t_{1}))=1$ and $cD_{i}^{J'}$ is an $S$-unit, the statement for $(2)$ follows from Lemma \ref{lemma: selmer conditions}.
\end{proof}
\begin{lemma}
The Hilbert pairing of any element in $P_{0}\subseteq V_{w}$ with any element of $P^{1}\subseteq V^{w}$ is trivial.
\end{lemma}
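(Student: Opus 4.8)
The plan is to compute the pairing $\langle\text{loc}_w(x),\text{loc}^w(y)\rangle_w$ for an arbitrary pair $x=[c][p_{J'}]\in R^{0}_{P_{T}}$, $y=[c'][p_{J''}]\in\hat{R}^{0}_{P_{T_{w}}}$ — so in particular $i_w\notin J'$ and $i_w\notin J''$ — by the reciprocity law, and then to make every remaining local term vanish. Since $\text{loc}_w=\phi_w\circ\ev_{t_1}$ and $\text{loc}^w=\phi^w\circ\ev^{t_1}$, the quantity in question is $\langle c\,p_{J'}(t_1),\,c'\,p_{J''}(t_1)\rangle_w$. I would first note that $c\,p_{J'}(t_1)$ and $c'\,p_{J''}(t_1)$ are units outside $T_w(t_1)=T\cup\{w\}\cup\{u_i^1\}_{i\in J}$: indeed $c,c'\in\mathcal{O}_T^{*}$, and by $T_w$-admissibility of $t_1$ each $p_j(t_1)$ is a unit outside $T_w$ away from the single place $u_j^1$. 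Hence in the product formula $\sum_v\langle c\,p_{J'}(t_1),c'\,p_{J''}(t_1)\rangle_v=0$ every place $v\notin T_w(t_1)$ contributes $0$ (such a $v$ is odd, as $2$ and $\infty$ lie in $S\subseteq T$), so
\[
\langle\text{loc}_w(x),\text{loc}^w(y)\rangle_w=\sum_{v\in T}\langle\ev_{t_1}(x),\ev^{t_1}(y)\rangle_v+\sum_{i\in J}\langle\ev_{t_1}(x),\ev^{t_1}(y)\rangle_{u_i^1},
\]
and it remains to show the right-hand side is zero.

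Recall that $x\in R_{P_T}$ means $\ev_{t_0}(x)\in W_v(t_0)$ for every $v\in T(t_0)$ (Proposition~\ref{prop: Harpaz selmer conditions}), and that $y\in\hat{R}_{P_{T_w}}$ means $\ev^{t_1}(y)\in W^v(t_1)$ for every $v\in T_w(t_1)$ (Lemmas~\ref{lemma: selmer conditions} and~\ref{lemma: selmer group of admissible fibres 2}). For $v\in T$, Lemma~\ref{lemma: eveluating at new admissible makes no differnce away from w}(1) turns $\ev_{t_0}(x)\in W_v(t_0)$ into $\ev_{t_1}(x)\in W_v(t_1)$; since $W_v(t_1)$ and $W^v(t_1)$ are orthogonal complements under $\langle-,-\rangle_v$, the corresponding term vanishes. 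For $v=u_i^1$ with $i\neq i_w$ the same argument applies, now using Lemma~\ref{lemma: eveluating at new admissible makes no differnce away from w}(2) to pass from $\ev_{t_0}(x)\in W_{u_i^0}(t_0)$ to $\ev_{t_1}(x)\in W_{u_i^1}(t_1)$, together with $\ev^{t_1}(y)\in W^{u_i^1}(t_1)$.

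The only term not covered by the comparison lemmas is the one at $u_{i_w}^1$, and this is exactly where working inside $R^{0}$ rather than $R$ is used: because $i_w\notin J'$ and $i_w\notin J''$, the factor $p_{i_w}$ appears in neither $c\,p_{J'}(t_1)$ nor $c'\,p_{J''}(t_1)$, while $c,c'$ and every remaining $p_j(t_1)$ (with $j\in J'\cup J''$, hence $j\neq i_w$) are units at $u_{i_w}^1$; thus both entries of the symbol are units at the odd place $u_{i_w}^1$, and the symbol vanishes. Summing up, every term on the right-hand side is $0$, so $\langle\text{loc}_w(x),\text{loc}^w(y)\rangle_w=0$. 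I expect the only genuinely delicate point to be this last verification at $u_{i_w}^1$ — checking that the $R^{0}$-condition is precisely what forces both arguments to be units there — while the contributions coming from $T$ and from the $u_i^1$ with $i\neq i_w$ follow routinely by combining the reciprocity law with the orthogonality statements and the comparison lemmas already established.
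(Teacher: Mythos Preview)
Your proof is correct and follows essentially the same route as the paper's: apply the product formula to $\langle \ev_{t_1}(x),\ev^{t_1}(y)\rangle$, kill the contributions at $v\in T$ and at $u_i^1$ for $i\neq i_w$ via Lemma~\ref{lemma: eveluating at new admissible makes no differnce away from w} together with the orthogonality of $W_v(t_1)$ and $W^v(t_1)$, and kill the contribution at $u_{i_w}^1$ by observing that the $R^0$-condition forces both entries to be units there. One small slip: since $y\in \hat{R}^{0}_{P_{T_w}}\subseteq J^{T_w}$, you only know $c'\in\mathcal{O}_{T_w}^{*}$, not $\mathcal{O}_T^{*}$; this is harmless, as every step of your argument (units outside $T_w(t_1)$, unit at $u_{i_w}^{1}\notin T_w$) only needs the weaker statement.
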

\begin{proof}
Let $x_{1}=[c_{1}][p_{J_{1}}]\in R^{0}_{P_{T}}$ and $x_{2}=[c_{2}][p_{J_{2}}]\in \hat{R}^{0}_{P_{T_{w}}}$ then by definition $i_{w}\not \in J_{1},J_{2}$. Then consider \[
\alpha_{1}:=ev_{t_{1}}(x_{1}) \ \text{and}\ \alpha_{2}:=ev^{t_{1}}(x_{2}).
\] What we want to show is $\langle \alpha_{1},\alpha_{2}\rangle_{w}=0$. As $i_{w}\not\in J_{1},J_{2}$ we have both $\alpha_{1},\alpha_{2}$ are units in $\mathbb{Z}_{u^{1}_{i_{w}}}$. Hence, the Hilbert symbol\[
\langle \alpha_{1},\alpha_{2}\rangle_{u^{1}_{i_{w}}}=0.
\] For $v\in T$ and $v=u^{1}_{i}$ for $i\neq i_{w}$ we have that $\alpha_{1}\in W_{v}(t_{1})$ by Lemma \ref{lemma: eveluating at new admissible makes no differnce away from w} and by definition $\alpha_{2}\in W^{v}(t_{1})$. By the orthogonality of $W^{v}(t_{1}),W_{v}(t_{1})$ we have that \[
\langle \alpha_{1},\alpha_{2}\rangle_{v}=0.
\] As $\alpha_{1},\alpha_{2}$ are units outside of $T_{w}(t_{1})$, we have that for $v\not \in T_{w}(t_{1})$
\[
\langle \alpha_{1},\alpha_{2}\rangle_{v}=0.
\] The only place that is left is $w$, apply quadratic reciprocity

\[
\langle \alpha_{1},\alpha_{2}\rangle_{w}=\sum\limits_{v\in T\cup \{u^{1}_{i}\}_{i\in J}}\langle \alpha_{1},\alpha_{2}\rangle_{v}=0.
\]
\end{proof}
\begin{lemma}
Let $x=[c][p_{J'}]\in \hat{R}^{0}_{P_{T_{w}}}$ be an element such that $\text{loc}^{w}(x)=0$, then $x\in \hat{R}_{P_{T}}^{0}$
\end{lemma}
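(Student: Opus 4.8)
The plan is to show that an element $x = [c][p_{J'}] \in \hat{R}^{0}_{P_{T_w}}$ with trivial localization at $w$ automatically satisfies the defining conditions of $\hat{R}_{P_T}$, which by Lemma \ref{lemma: selmer group of admissible fibres 2} means: (a) $ev^{t_0}(x) \in W^v(t_0)$ for every $v \in T$, and (b) the two Hilbert-symbol conditions at each place $u_i^0$ (for $i \in J$, according to whether $i \in J'$ or not). Since $i_w \notin J'$ (because $x \in \hat{R}^{0}_{P_{T_w}}$), I only ever need to verify the "$i \notin J'$" branches at $u_{i_w}$, while for $i \neq i_w$ I can transport the $t_1$-statement back to the $t_0$-statement.

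First I would handle the places $v \in T$. Here the conditions for membership in $W^v(t_0)$ versus $W^v(t_1)$ agree, because $T$-admissibility forces $t_0$ and $t_1$ to be close $v$-adically (Definition \ref{defn: admissible points}(1)), so $[p_i(t_0)]_v = [p_i(t_0)]_v$ and hence $[ev^{t_0}(x)]_v = [ev^{t_1}(x)]_v$; this is exactly the content of the analogue of Lemma \ref{lemma: eveluating at new admissible makes no differnce away from w}(1) for the "upper" spaces. Since $x \in \hat{R}^{0}_{P_{T_w}} \subseteq \hat{R}_{P_{T_w}}$, we know $ev^{t_1}(x) \in W^v(t_1)$ for all $v \in T_w \supseteq T$, so condition (a) of Lemma \ref{lemma: selmer group of admissible fibres 2} holds. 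Next, for the auxiliary places $u_i^0$ with $i \neq i_w$: using Lemma \ref{lemma: Comparing Selmer groups Lemma 1.5} (noting $i \in (J')^c$ since we are in the $i \notin J'$ case, and $cD_i^{J'}$ is an $S$-unit), the sum $\langle p_i(t_0), ev^{t_0}(x)\rangle_{u_i^0} + \langle p_i(t_1), ev^{t_1}(x)\rangle_{u_i^1}$ vanishes, so the $u_i^0$-condition for $x \in \hat{R}_{P_T}$ is equivalent to the corresponding $u_i^1$-condition for $x \in \hat{R}_{P_{T_w}}$, which we already know; this is the "upper" analogue of Lemma \ref{lemma: eveluating at new admissible makes no differnce away from w}(2).

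The remaining work is at the place $u_{i_w}^0$, and this is where the hypothesis $\mathrm{loc}^w(x) = 0$ enters — this is the main point of the lemma. We must show $\langle p_{i_w}(t_0), ev^{t_0}(x)\rangle_{u_{i_w}^0} = 0$. By quadratic reciprocity applied to the pair $(p_{i_w}(t_1), ev^{t_1}(x))$ over the full set $T_w(t_1) = T \cup \{u_i^1\}_{i \in J}$, this symbol at $u_{i_w}^1$ equals $\langle p_{i_w}(t_1), ev^{t_1}(x)\rangle_w$ plus the sum over $v \in T$ and over $u_i^1$ with $i \neq i_w$ of the local symbols (the term at $u_{i_w}^1$ moves to the other side, and $p_{i_w}$ is a unit away from $u_{i_w}^1$ and $T_w$). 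The term at $w$ is $\langle \mathrm{loc}^w(\text{something}), \cdot\rangle_w$; more precisely, since $\mathrm{loc}^w(x) = \phi^w \circ ev^{t_1}(x) = 0$ in $V^w$, we get $ev^{t_1}(x)$ is a square at $w$, so $\langle p_{i_w}(t_1), ev^{t_1}(x)\rangle_w = 0$. The remaining terms over $v \in T$ and $u_i^1$ ($i \neq i_w$) match, by the approximation conditions and Lemma \ref{Comparing Selmer groups Lemma 1}, the corresponding sum for $t_0$; running reciprocity in reverse for $(p_{i_w}(t_0), ev^{t_0}(x))$ over $T(t_0)$ then identifies that sum with $\langle p_{i_w}(t_0), ev^{t_0}(x)\rangle_{u_{i_w}^0}$. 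Combining, $\langle p_{i_w}(t_0), ev^{t_0}(x)\rangle_{u_{i_w}^0} = \langle p_{i_w}(t_1), ev^{t_1}(x)\rangle_{u_{i_w}^1} = 0$, since the latter symbol is $0$ as $x \in \hat{R}_{P_{T_w}}$. The only subtlety I anticipate is bookkeeping the place-by-place cancellation carefully (ensuring no stray contribution from $w$ or from $u_{i_w}$ on either side is double-counted or dropped), but each individual comparison is supplied by the lemmas already proved in Section \ref{sec: comparing Selmer groups}; no genuinely new idea is needed beyond feeding in $\mathrm{loc}^w(x) = 0$ at the one place $w$.
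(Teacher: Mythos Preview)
Your overall approach mirrors the paper's: verify the Selmer conditions of Lemma~\ref{lemma: selmer group of admissible fibres 2} at each $v \in T$ by approximation, and at the $u_i^0$ by transporting from the $u_i^1$-conditions. However, two steps are missing from your write-up.

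First, you never establish that $x$ actually lies in $J^T$. A priori $x = [c][p_{J'}] \in J^{T_w}$, so $c \in \mathcal{O}_{T_w}^*/(\mathcal{O}_{T_w}^*)^2$; you need $c$ to be a unit at $w$ before $x \in J^T$ (and hence $x \in \hat{R}_{P_T}^0$) even makes sense. The paper extracts this from $\mathrm{loc}^w(x) = 0$: since $i_w \notin J'$, each $p_j(t_1)$ for $j \in J'$ is a unit at $w$, so $[cp_{J'}(t_1)]_w = 0$ forces $c \in \mathbb{Z}_T^*$. You later invoke ``$cD_i^{J'}$ is an $S$-unit'' without having justified this.

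Second, for $i \neq i_w$ you only treat the branch $i \notin J'$; Lemma~\ref{lemma: Comparing Selmer groups Lemma 1.5} carries the hypothesis $i \in (J')^c$, so your citation of it does not cover $i \in J'$. In that case the condition to check is $\langle p_i(t_0), ev^{t_0}(x[-d][p_J])\rangle_{u_i^0} = 0$, and the paper handles it separately by expanding bimultiplicatively and combining the fact that $-cd$ is a unit outside $T$ with Lemma~\ref{Comparing Selmer groups Lemma 1}. Your careful treatment of $i = i_w$ is correct (it essentially reproves the $i = i_w$ clause of Lemma~\ref{lemma: Comparing Selmer groups Lemma 1.5}), but it does not substitute for the missing $i \in J'$ case.
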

\begin{proof}
By definition we have $i_{w}\not\in J'$ as $x\in \hat{R}^{0}_{P_{T_{w}}}$. Moreover, as $\text{loc}^{w}(x)=[cp_{J'}(t_{1})]_{w}=0$ in $ \mathbb{Q}^{*}_{w}/(\mathbb{Q}^{*}_{w})^2$, this implies that $c$ is a unit at $w$, hence $c\in \mathbb{Z}_{T}^{*}$. By Selmer conditions we have $ev^{t_{1}}(x)\in W^{v}(t_{1})$ for all $v\in T$ and $ev^{t_{0}}(x)\in W^{v}(t_{0})$ for all $v\in T$. Moreover,\[
\begin{cases}

        \langle p_{i}(t_{1}),ev^{t_{1}}(x)\rangle_{u^{1}_{i}}=0\ &\text{if}\ i\not\in J'\\
        \langle p_{i}(t_{1}),ev^{t_{1}}(x[-d][p_{J}])\rangle_{u^{1}_{i}}=0\ &\text{if}\ i\in J

\end{cases}\] What we need to show is that for $u_{i}^{0}$ \[
\begin{cases}

        \langle p_{i}(t_{0}),ev^{t_{0}}(x)\rangle_{u^{0}_{i}}=0\ &\text{if}\ i\not\in J'\\
        \langle p_{i}(t_{0}),ev^{t_{0}}(x[-d][p_{J}])\rangle_{u^{0}_{i}}=0\ &\text{if}\ i\in J

\end{cases}\]
We first consider the case $i\not\in J'$ (this includes $i=i_{w}$), this follows from Lemma \ref{lemma: Comparing Selmer groups Lemma 1.5}. Now assume $i\in J'$ (which implies $i\neq i_{w}$), then the statement follows from bimultiplicative property of Hilbert symbols, the fact that $-cd$ are units outside of $T$ and Lemma \ref{Comparing Selmer groups Lemma 1}.
\end{proof}
\begin{lemma}[{\cite[Prop 3.8.11,(2)]{H19}}]\label{lemma: strictly smaller selmer}
If $P_{0}$ and $P^{0}$ are non-zero then $P^{1}=\{0\}$ and $\hat{R}^{0}_{P_{T_{w}}}\subsetneq \hat{R}^{0}_{P_{T}}$.
\end{lemma}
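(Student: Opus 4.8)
The plan is to first establish that $P^{1}=\{0\}$, and then to deduce the strict inclusion $\hat{R}^{0}_{P_{T_{w}}}\subsetneq\hat{R}^{0}_{P_{T}}$ from the exact sequence furnished by localisation at $w$.

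I would begin by nailing down the local picture at $w$. Since $w\notin S_{0}$ and $2\in S_{\text{bad}}\subseteq T$ while $w\notin T$, the place $w$ is odd, so $V_{w}=V^{w}=\mathbb{Q}_{w}^{*}/(\mathbb{Q}_{w}^{*})^{2}$ is a $2$-dimensional $\mathbb{F}_{2}$-vector space carrying the perfect Hilbert pairing $\langle-,-\rangle_{w}$, and the classes of $w$-adic units form a line $N\subseteq V_{w}$ which is totally isotropic, so $N^{\perp}=N$. Because $w\notin S_{\text{bad}}$ and $p_{i_{w}}(t_{w})$ is a uniformiser, one has $\text{val}_{w}(dp_{J}(t_{w}))=1$, and the approximation conditions built into $T_{w}$-admissibility carry this to $\text{val}_{w}(dp_{J}(t_{1}))=1$; hence $w\in T_{0}(t_{1})$, so $W^{w}(t_{1})=\langle[-dp_{J}(t_{1})]\rangle$ is the line spanned by a class of odd valuation, and in particular $W^{w}(t_{1})\cap N=\{0\}$.

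Next I would prove $P^{1}=\{0\}$. On the one hand, every $x=[c][p_{J'}]\in\hat{R}^{0}_{P_{T_{w}}}$ has $i_{w}\notin J'$ and, by the description of the dual Selmer group (Lemma \ref{lemma: selmer group of admissible fibres 2} applied to $T_{w}$), satisfies $\text{loc}^{w}(x)\in W^{w}(t_{1})$; hence $P^{1}=\text{loc}^{w}(\hat{R}^{0}_{P_{T_{w}}})\subseteq W^{w}(t_{1})$. On the other hand, for $x=[c][p_{J'}]\in R^{0}_{P_{T}}$ we again have $i_{w}\notin J'$, while $c$ is a $w$-adic unit (as $w\notin T$) and each $p_{j}(t_{1})$ with $j\in J'$ is a $w$-adic unit (since $j\neq i_{w}$ and $w\notin S_{\text{bad}}$); so $ev_{t_{1}}(x)=cp_{J'}(t_{1})$ is a $w$-adic unit and $P_{0}=\text{loc}_{w}(R^{0}_{P_{T}})\subseteq N$. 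As $P_{0}\neq\{0\}$ and $\dim_{\mathbb{F}_{2}}N=1$, this forces $P_{0}=N$, whence $P_{0}^{\perp}=N^{\perp}=N$. By the lemma asserting that the Hilbert pairing of any element of $P_{0}$ with any element of $P^{1}$ is trivial, $P^{1}\subseteq P_{0}^{\perp}=N$; combining, $P^{1}\subseteq W^{w}(t_{1})\cap N=\{0\}$.

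Finally, once $P^{1}=\{0\}$, every $x\in\hat{R}^{0}_{P_{T_{w}}}$ has $\text{loc}^{w}(x)=0$, so by the lemma immediately preceding the statement $x\in\hat{R}^{0}_{P_{T}}$; hence $\hat{R}^{0}_{P_{T_{w}}}\subseteq\ker\!\left(\text{loc}^{w}|_{\hat{R}^{0}_{P_{T}}}\right)$, and since $\hat{R}^{0}_{P_{T}}\big/\ker\!\left(\text{loc}^{w}|_{\hat{R}^{0}_{P_{T}}}\right)\cong P^{0}\neq\{0\}$ this kernel is a proper subgroup, so $\hat{R}^{0}_{P_{T_{w}}}\subsetneq\hat{R}^{0}_{P_{T}}$. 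I expect the main obstacle to be exactly the local bookkeeping at $w$ gathered in the second paragraph: verifying that $w\in T_{0}(t_{1})$ (so that $W^{w}(t_{1})$ is a ``ramified'' line meeting $N$ only in $\{0\}$) and that the relevant relative Selmer and dual-Selmer classes, after evaluation at $t_{1}$, remain $w$-adic units. These hinge on $w\notin S_{0}\cup S_{\text{bad}}$, on the standing assumption that $p_{i_{w}}(t_{w})$ is a uniformiser, and on the approximation conditions of $T_{w}$-admissibility; after that, the argument is linear algebra over $\mathbb{F}_{2}$ together with the exact sequence $0\to\ker(\text{loc}^{w})\to\hat{R}^{0}_{P_{T}}\to P^{0}\to 0$.
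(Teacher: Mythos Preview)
Your argument is correct. The paper itself gives no proof of this lemma, merely citing \cite[Prop.~3.8.11(2)]{H19}; your write-up supplies exactly the local-duality argument one expects, and it uses precisely the two lemmas the paper has set up for this purpose (orthogonality of $P_{0}$ and $P^{1}$, and the implication $\text{loc}^{w}(x)=0\Rightarrow x\in\hat{R}^{0}_{P_{T}}$) together with elementary $\mathbb{F}_{2}$-linear algebra in the $2$-dimensional space $V_{w}$.

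One small remark: in your second paragraph you invoke Lemma~\ref{lemma: selmer group of admissible fibres 2} to place $P^{1}\subseteq W^{w}(t_{1})$; strictly speaking that lemma is stated for $T$ rather than $T_{w}$, but its proof goes through verbatim with $T$ replaced by $T_{w}$, so this is harmless. Everything else---that $w$ is odd, that $w\in T_{0}(t_{1})$ so $W^{w}(t_{1})$ is the ``ramified'' line, that $P_{0}\subseteq N$, and the concluding exact-sequence step---is cleanly justified.
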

\section{Proof of main theorem}\label{sec: main thm}
In this section we will finish the proof of Theorem \ref{thm: main thm}. Recall if $S_{\text{split}}\subseteq S_{0}$ denotes the set of places of $S_{0}$ which split in $K$, then \[
\dim_{\mathbb{F}_{2}}\Sel(\mathcal{T},S)-\dim_{\mathbb{F}_{2}}\Sel(\hat{\mathcal{T}},S)=|S_{\text{split}}|.
\]
\begin{lemma}\label{lemma: if in Gi and satisfies selmer then in GD}
Let $P_{T}$ be a suitable partial adelic point with $t$ an associated admissible point. If $x\in R_{P_{T}}$ lies in $G_{i}$ for some $i$ then $x\in G_{D}$.
\end{lemma}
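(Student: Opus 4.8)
The plan is to argue by contradiction, exploiting the witnessing primes that were built into $S_{D}$. Suppose $x=[c][p_{J'}]\in R_{P_{T}}$ lies in $G_{i}$ for some $i\in J$ but $x\notin G_{D}=\bigcap_{j\in J}G_{j}$. Since $G_{D}\subseteq G_{i}$ this puts $x$ in $G_{i}\backslash G_{D}$, so Remark \ref{rem: construction} applies to $x$: there is an index $i_{x}\neq i$ with $[cD^{J'}_{i_{x}}]\notin\{1,[aD^{A}_{i_{x}}]\}$ in $G$, together with a place $v_{x}\in S_{D}$ at which $cD^{J'}_{i_{x}}$ is a non-square while $aD^{A}_{i_{x}}$ is a square. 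Note $v_{x}\in S_{D}\subseteq S\subseteq T\subseteq T(t_{0})$, and since $x\in J_{T}$ we may assume, adjusting the chosen representative of the square class $[c]$ by a square, that $c$ is a unit at $v_{x}$.

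The first step is to check that Lemma \ref{lemma: selmer conditions} applies at $v_{x}$ with the index $i_{x}$. As $v_{x}\notin S_{0}\cup S_{\text{bad}}$, the place $v_{x}$ is odd, $d$ is a unit at $v_{x}$, and (since $v_{x}$ divides no $\Delta_{j,k}$) every $p_{j}(t_{v_{x}})$ with $j\neq i_{x}$ is a unit at $v_{x}$. Condition (6) of Definition \ref{defn: suitable partial adelic point}, read through the construction of $S_{D}$ in Remark \ref{rem: construction}, places $P_{v_{x}}$ on the fibre above a point of $\mathbb{A}^{1}(\mathbb{Z}_{v_{x}})$ reducing to $-d_{i_{x}}/c_{i_{x}}$ modulo $\mathfrak{m}_{v_{x}}$, while condition (2) forces $dp_{J}(t_{v_{x}})$ to have valuation at most $1$ at $v_{x}$; hence $p_{i_{x}}(t_{v_{x}})$, and so $dp_{J}(t_{v_{x}})$, is a uniformiser there. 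Since $t_{0}$ approximates $t_{v_{x}}$ at $v_{x}$ (Definition \ref{defn: admissible points}), all of this passes to $t_{0}$: we get $t_{0}\equiv -d_{i_{x}}/c_{i_{x}}\pmod{\mathfrak{m}_{v_{x}}}$, $dp_{J}(t_{0})$ a uniformiser at $v_{x}$, and $v_{x}\in T_{0}(t_{0})$. I expect this bookkeeping to be the main obstacle; morally it is just the observation that $S_{\text{bad}}$, $S_{D}$, and the notions of suitable and admissible point were arranged precisely so that $v_{x}$ is a legal place for the Selmer description of $\mathcal{T}_{t_{0}}$.

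Granting this, the conclusion is immediate. As $x\in R_{P_{T}}$, the class $ev_{t_{0}}(x)$ lies in $\Sel(\mathcal{T}_{t_{0}},T_{0}(t_{0}))$, which by Proposition \ref{prop: Harpaz selmer conditions} is contained in $\bigoplus_{v\in T(t_{0})}W_{v}(t_{0})$; in particular its $v_{x}$-component lies in $W_{v_{x}}(t_{0})$. Feeding this into Lemma \ref{lemma: selmer conditions} at $v_{x}$ with $i=i_{x}$ yields $[ev_{t_{0}}(x)]_{v_{x}}=0$ when $i_{x}\notin J'$ and $[ev_{t_{0}}(x[d][p_{J}])]_{v_{x}}=0$ when $i_{x}\in J'$. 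Using $t_{0}\equiv -d_{i_{x}}/c_{i_{x}}\pmod{\mathfrak{m}_{v_{x}}}$ together with the fact that $c$, hence $cd$, is a unit at $v_{x}$, the first condition becomes $[c\,p_{J'}(-d_{i_{x}}/c_{i_{x}})]_{v_{x}}=[cD^{J'}_{i_{x}}]_{v_{x}}=0$; in the second case, writing $x[d][p_{J}]=[cd][p_{(J')^{c}}]$ and using $D^{J'}_{i_{x}}=dp_{(J')^{c}}(-d_{i_{x}}/c_{i_{x}})$ (valid as $i_{x}\in J'$), it likewise becomes $[cD^{J'}_{i_{x}}]_{v_{x}}=0$. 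In both cases $cD^{J'}_{i_{x}}$ is a square in $\mathbb{Q}_{v_{x}}$, contradicting the choice of $v_{x}$. Hence $x\in G_{D}$.
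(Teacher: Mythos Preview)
Your argument is correct and follows essentially the same route as the paper's proof: assume $x\in G_{i}\setminus G_{D}$, invoke the witnessing prime $v_{x}\in S_{D}$ from Remark~\ref{rem: construction}, and obtain a contradiction with the Selmer condition at $v_{x}$ via Lemma~\ref{lemma: selmer conditions}. The paper's own proof is extremely terse (and contains typos: it writes $[aD^{J'}_{i}]_{v_{x}}$ where $[cD^{J'}_{i_{x}}]_{v_{x}}$ is intended), whereas you carefully verify the hypotheses of Lemma~\ref{lemma: selmer conditions} at $v_{x}$ and unwind both cases $i_{x}\in J'$ and $i_{x}\notin J'$; this added bookkeeping is exactly what the paper leaves implicit.
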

\begin{proof}
Suppose there exists $x=[c][p_{J'}]\in G_{i}\backslash G_{D}$  i.e. there exists a $i_{x}\neq i$ with associated place $v_{x}$ such that $[aD^{J'}_{i}]_{v_{x}}\neq 0$. As $x$ satisfies the Selmer conditions at $v_{x}$ from Lemma \ref{lemma: selmer conditions} we require $[aD^{J'}_{i}]_{v_{x}}= 0$ which is a contradiction.
\end{proof}
\begin{lemma}\label{lemma: reduce size of dual selmer}
Let $T$ be a finite set of places containing $S$, let $P_{T}$ be a suitable partial adelic point and $t_{0}$ a $T$-admissible point with respect to $P_{T}$ with associated places $\{u_{i}^{0}\}_{i\in J}$. Let $[x]\in \hat{R}_{P_{T}}$ be such that $[x]\not\in \{0,[-d][p_{J}]\}$. Then there exists a place $w$ and an extension of $P_{T}$ to a suitable partial adelic point $P_{T_{w}}$, where $T_{w}:=T\cup\{w\}$ such that $\hat{R}_{P_{T_{w}}}\subsetneq \hat{R}_{P_{T}}$.
\end{lemma}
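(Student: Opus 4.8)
The plan is to exhibit a single new place $w$ at which the chosen element $[x] = [c][p_{J'}]$ becomes "detectable", so that after enlarging $T$ to $T_w = T \cup \{w\}$ and choosing a $T_w$-admissible point, the relative dual Selmer group strictly shrinks. The machinery for this is already in place: Lemma \ref{lemma: strictly smaller selmer} tells us that if the images $P_0 = \mathrm{loc}_w(R^0_{P_T})$ and $P^0 = \mathrm{loc}^w(\hat R^0_{P_T})$ are both non-zero, then $P^1 = \{0\}$ and $\hat R^0_{P_{T_w}} \subsetneq \hat R^0_{P_T}$. So the whole task reduces to: (i) choosing $w$ and the extended local data $P_{T_w}$ so that the resulting point is still suitable, so that the Assumption of Section \ref{sec: comparing Selmer groups} holds (i.e. some $p_{i_w}(t_w)$ is a uniformizer at $w$), and so that $P_0 \neq \{0\}$ and $P^0 \neq \{0\}$; and then (ii) upgrading the conclusion $\hat R^0_{P_{T_w}} \subsetneq \hat R^0_{P_T}$ about the "$i_w \notin J'$" subgroups to the full statement $\hat R_{P_{T_w}} \subsetneq \hat R_{P_T}$.

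First I would handle step (i). Since $[x] \notin \{0, [-d][p_J]\}$, by Condition (D) (and Lemma \ref{lemma: if in Gi and satisfies selmer then in GD}, which forbids elements of $\hat R_{P_T}$ that lie in some $G^i$ but not in $G^D$) the class $[x]$ is not in $G^D$, so there is some index $i_w \in J$ at which $[x]$ fails the defining condition of $G^{i_w}$; concretely, $[c\hat D^{J'}_{i_w}]$ is not in $\langle [aD^A_{i_w}]\rangle$. Using Hypothesis (H) in the form (H$_1$) together with Chebotarev/Hilbert-symbol reciprocity — exactly as in the construction of $S_D$ in Remark \ref{rem: construction} and in Proposition \ref{prop: can always find T-admissible point} — I would produce a place $w \notin S_0 \cup S_{\mathrm{bad}}$, odd, with $t_1 \equiv -d_{i_w}/c_{i_w}$ modulo $\mathfrak{m}_w$, at which $p_{i_w}$ has a simple zero (so the Assumption holds with this $i_w$), and at which one can arrange the Hilbert-symbol value of the residue of $[x]$ to be non-trivial. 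Then I would extend $P_T$ to $P_{T_w}$ by picking $P_w \in \mathcal U(\mathbb{Z}_w)$ above this $t_1$-value; properties (1)–(4) of suitability are automatic since $w \notin S_0$ and $p_{i_w}$ has a simple zero there, property (6) is vacuous at $w$, and property (5) — the sum of invariants of $\mathcal A_j$ vanishing — is restored by a shift using Lemma \ref{lemma: Brauer element pairing trivially at place}, exactly as in Proposition \ref{prop: Can always find suitable partial adelic point}. The non-triviality of the residue translates, via the $\mathrm{loc}^w$ computation (Lemma \ref{lemma: selmer group of admissible fibres 2} applied at $u_{i_w}^1 = w$), into $P^0 \ne \{0\}$; a parallel or dual choice handles $P_0 \ne \{0\}$ (here one uses that the pairing between $P_0$ and $P^1$, hence between $P^0$ and $P_1$, is perfect on the relevant quotients, so one can always force both to be non-zero by the reciprocity bookkeeping, which is the content of the Harpaz-style argument behind Lemma \ref{lemma: strictly smaller selmer}).

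For step (ii): Lemma \ref{lemma: strictly smaller selmer} gives $\hat R^0_{P_{T_w}} \subsetneq \hat R^0_{P_T}$, i.e. strict containment of the subgroups of classes with $i_w \notin J'$. To get the full statement, note that $\hat R_{P_T}$ is an extension of $\hat R^0_{P_T}$ by the part involving $[p_{i_w}]$: more precisely, the quotient $\hat R_{P_T}/\hat R^0_{P_T}$ is cut out by the condition that multiplying by $[-d][p_J]$ (which contains $[p_{i_w}]$) lands back in the "$i_w\notin J'$" piece, so it is controlled by whether $[-d][p_J]$ itself lies in $\hat R_{P_T}$ — and $[-d][p_J]$ always does, being the distinguished element. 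Thus $\dim \hat R_{P_T} = \dim \hat R^0_{P_T} + 1$ and likewise for $T_w$ (one checks $[-d][p_J]$ still satisfies all Selmer conditions after adding $w$, since it is unramified away from the $p_j$'s and the reciprocity sum is unaffected), so the drop in $\hat R^0$ forces the same drop in $\hat R$.

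The main obstacle I expect is step (i), specifically the simultaneous arrangement of $P_0 \ne \{0\}$ and $P^0 \ne \{0\}$ at the \emph{same} new place $w$: one must verify that the reciprocity constraints tying together the residues of $[x]$ at $w$ and at the old places $u_i^0$ leave enough freedom — this is where one leans on the perfectness of the Hilbert pairing $\langle\,,\rangle_w$ on $V_w \times V^w$ and on the fact that $[x] \notin \langle [-d][p_J]\rangle$ guarantees $\mathrm{loc}^w$ (for a well-chosen $w$ from (H$_1$)) does not kill $[x]$, so that $\hat R^0_{P_T}$ does not already localize to zero at $w$. Getting the (H$_1$)-produced place to simultaneously satisfy all the congruence and splitting conditions used throughout (oddness, avoidance of $S_{\mathrm{bad}}$, the congruence on $t_1$, and the prescribed Hilbert-symbol value) is the delicate bookkeeping, but it is of the same nature as the constructions already carried out in Sections \ref{sec: vertical Brauer group}–\ref{sec: comparing Selmer groups}.
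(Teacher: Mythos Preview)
Your outline correctly identifies the architecture—reduce to Lemma \ref{lemma: strictly smaller selmer} by exhibiting a place $w$ at which both $P_0$ and $P^0$ are non-zero, then pass from $\hat R^0$ to $\hat R$ via the splitting $\hat R_{P_T}=\hat R^0_{P_T}\oplus\FF_2\langle[-d][p_J]\rangle$—but the decisive step is left unresolved. You flag ``the simultaneous arrangement of $P_0\neq\{0\}$ and $P^0\neq\{0\}$'' as the main obstacle and then dispatch $P_0\neq\{0\}$ by appealing to ``perfectness of the Hilbert pairing'' and ``the Harpaz-style argument behind Lemma \ref{lemma: strictly smaller selmer}''. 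Neither helps: Lemma \ref{lemma: strictly smaller selmer} is the conclusion one draws \emph{after} both images are known to be non-zero, and perfectness of $\langle\,,\,\rangle_w$ on $V_w\times V^w$ says nothing about whether the \emph{Selmer} group $R^0_{P_T}$ actually contains an element with non-trivial image at $w$. Indeed the obvious candidates fail: $[a][p_A]$ localizes to $[aD^A_{i_w}]_w$, which you have just arranged to be a square, and $[d][p_J]$ does not lie in $R^0_{P_T}$.

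The missing ingredient, and the point of the paper's proof, is to produce an explicit witness $x_1\in R_{P_T}$ (Selmer, not dual Selmer) lying outside $G_D=\langle[a][p_A],[d][p_J]\rangle$. This uses condition (4) of Definition \ref{defn: suitable partial adelic point}: the place of $S_0$ at which $-dp_J(t_v)$ is a square is split in the relevant quadratic extension, so $\dim_{\FF_2}R_{P_T}-\dim_{\FF_2}\hat R_{P_T}\geq 1$; combined with the hypothesis on $[x]$ (which gives $\dim_{\FF_2}\hat R_{P_T}\geq 2$) one gets $\dim_{\FF_2}R_{P_T}\geq 3$, hence such an $x_1$ exists. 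Lemma \ref{lemma: if in Gi and satisfies selmer then in GD} is then applied to $x_1$—not to $x_0$ as you suggest; it is stated only for $R_{P_T}$ and $G_i$—and yields $x_1\notin G_{i_x}$ for \emph{every} index, in particular for the $i_x$ coming from $x_0\notin G^D$. After replacing $x_0$ by $x_0[-d][p_J]$ and $x_1$ by $x_1[d][p_J]$ if necessary so that $i_x\notin J_0\cup J_1$, a single application of Chebotarev (not $(\mathrm{H}_1)$) produces $w$ at which $aD^A_{i_x}$ is a square while both $c_0D^{J_0}_{i_x}$ and $c_1D^{J_1}_{i_x}$ are non-squares; this gives $\text{loc}^w(x_0)\neq 0$ and $\text{loc}_w(x_1)\neq 0$ simultaneously. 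Your step (ii) matches the paper.
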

\begin{proof} Let $x_{0}:=x=[c_{0}][p_{J_{0}}]$. By assumption there exists a place $v\in S_{0}$ such that $v\in S_{\text{split}}$  we know the difference $\dim_{\mathbb{F}_{2}}R_{P_{T}}-\dim_{\mathbb{F}_{2}}\hat{R}_{P_{T}}\geq 1$ and by assumption $\dim_{\mathbb{F}_{2}}\hat{R}_{P_{T}}\geq 2$.  Hence, \[
\dim_{\mathbb{F}_{2}}R_{P_{T}}>\dim_{\mathbb{F}_{2}}\hat{R}_{P_{T}}\geq 2.
\] and we can deduce there exists $x_{1}=[c_{1}][p_{J'}]\in R_{P_{T}}$ such that $x_{1}\not\in \langle [a][p_{A}],[d][p_{J}]\rangle \subseteq G$. By Condition (D) there exists $i_{x}\in J$ such that $c_{1}\hat{D}^{J_{1}}_{i_{x}}\not\in \{1,[aD_{i_{x}}^{A}]$. We can assume $i_{x}\not\in J_{0}\cup J_{1}$ as we can replace $x_{0}$ with $x_{0}[-dp_{J}]$ and $x_{1}$ with $x_{1}[dp_{J}]$. If $x_{1}\in R_{P_{T}}$ and $x_{1}\in G_{i_{x}}$ by Lemma \ref{lemma: if in Gi and satisfies selmer then in GD} then $x_{1}\in G_{D}$ which would contradict Condition (D), hence, $x_{1}\not\in G_{i_{x}}$. Applying Chebotarev's density theorem, there exists a place $w$ such $aD_{i_{x}}^{A}$ is a square at $w$ but $c_{0}D^{J_{0}}_{i_{x}}$ and $c_{1}\hat{D}^{J_{1}}_{i_{x}}$ are non-squares. We can choose $t_{w}\in \mathbb{Z}_{w}$ such that $p_{i}(t_{w})$ is a uniformizer at $w$, using (H). By Lemma \ref{lemma: locally soluble fibre} the fibre $\mathcal{U}_{t_{w}}(\mathbb{Z}_{w})\neq \emptyset$ and we may extend our suitable partial adelic point $P_{T}$ over $T$ to a suitable partial adelic point $P_{T_{w}}$ over $T_{w}$, where $P_{w}$ is a point above $t_{w}$. By Proposition \ref{prop: can always find T-admissible point} we can find a $T_{w}$-admissible point $t_{w}$ with respect to $P_{T_{w}}$. We denote by $\{u_{i}^{1}\}_{i\in J}$ the associated places of $t_{1}$ appearing from applying (H). We have that $x_{0}\in \hat{R}_{P_{T}}^{0}$ and $x_{1}\in R_{P_{T}}^{0}$ as $i_{x}\not\in J_{0}\cup J_{1}$. By our choice of $w$ we have that $\text{loc}^{w}(x_{0}),\text{loc}_{w}(x_{1})\neq 0$. Hence, $P^{0},P_{0}\neq \{0\}$. By Lemma \ref{lemma: strictly smaller selmer} we have that 
$\hat{R}^{0}_{P_{T_{w}}}\subsetneq \hat{R}^{0}_{P_{T}}$. As 
\[
\hat{R}_{P_{T}}=\hat{R}_{P_{T}}^{0}\oplus \mathbb{F}_{2}\langle [-d][p_{J}]\rangle,\ \ \ \hat{R}_{P_{T_{w}}}=\hat{R}_{P_{T_{w}}}^{0}\oplus \mathbb{F}_{2}\langle [-d][p_{J}]\rangle 
\]we have $\hat{R}_{P_{T_{w}}}\subsetneq \hat{R}_{P_{T}}$.
\end{proof}
\begin{proof}[\textbf{Proof of Theorem \ref{thm: main thm}}]
We can always find a suitable partial adelic point $P_{S}$ by Proposition \ref{prop: Can always find suitable partial adelic point}. Invoking Proposition \ref{prop: can always find T-admissible point} we can find an $S$-admissible point $t_{0}$ associated to $P_{S}$. Then we can apply Lemma \ref{lemma: reduce size of dual selmer} recursively to find a finite set of places $S\subseteq T$ such that we can extend $P_{S}$ to $P_{T}$ and for every $T$-admissible point $t_{1}$ associated to $P_{T}$ has the dual Selmar group $\Sel(\hat{\mathcal{T}}_{t_{1}},T_{0}(t_{1}))$ is generated by $[-dp_{J}(t_{1})]$, as the choice of $T$-admissible point does not matter. Applying Proposition \ref{prop: harpaz result for hasse principle and size of selmer group} we can deduce $\mathcal{U}_{t_{1}}(\mathbb{Z}_{S_{0}})\neq \emptyset$, hence $\mathcal{U}(\mathbb{Z}_{S_{0}})\neq \emptyset$.
\end{proof}

\bibliographystyle{amsalpha}{}
\bibliography{bibliography_project4/referencesproject4}
\end{document}